\def\@currentlabel{2.1}\label{e:dispaa}
\def\@currentlabel{2.21}\label{e:dispau}
\def\@currentlabel{2.22}\label{e:dispav}
\def\@currentlabel{2.23}\label{e:dispaw}
\def\@currentlabel{2.24}\label{e:dispax}
\def\theequation{\thesection.\@arabic\c@equation}
\definecolor{dullmagenta}{rgb}{0.4,0,0.4}   
\definecolor{darkblue}{rgb}{0,0,0.4}
\newcommand{\A}{\mathcal{A}}
\newcommand{\R} {\mathbb R}
\newcommand{\ES}{\mathbb S}
\newcommand{\cuad}{{\sqcap\kern-.68em\sqcup}}
\newcommand{\be}{\begin{equation}}
\newcommand{\ee}{\end{equation}}
\newcommand{\supp}{\mathop{\mbox{\normalfont supp}}\nolimits}
\renewcommand{\(}{\left(}
\renewcommand{\)}{\right)}
\newcommand{\qb}{{\root 4\of \beta}}
 \newcommand{\bw}{{\boldsymbol{\mathrm{w}}}}
 \newcommand{\w}{{{\mathrm{w}}}}
  \newcommand{\mv}{{{\mathrm{v}}}}
  \newcommand{\mU}{{{\mathrm{U}}}}
  \newcommand{\mr}{{\boldsymbol{\mathrm {r}}}}
 \newcommand{\pC}{{\mathring C}}
\newcommand{\wt}[1]{\widetilde{#1}}
\newcommand{\mbV}{\mathbf {V}}
\newcommand{\mbW}{\mathbf{W}}
\newcommand{\eps}{\epsilon}
\newcommand{\bs}[1]{\boldsymbol{#1}}
\newcommand{\Id}{\operatorname{Id}}
\newcommand{\pr}{\parallel}
\newcommand{\blu}[1]{{\color{black}{#1}}}
\def\bR {\mathbb{R}}
\newcommand*\rel@kern[1]{\kern#1\dimexpr\macc@kerna}
\newcommand*\widebar[1]{%
  \begingroup
  \def\mathaccent##1##2{%
    \rel@kern{0.8}%
    \overline{\rel@kern{-0.8}\macc@nucleus\rel@kern{0.2}}%
    \rel@kern{-0.2}%
  }%
  \macc@depth\@ne
  \let\math@bgroup\@empty \let\math@egroup\macc@set@skewchar
  \mathsurround\z@ \frozen@everymath{\mathgroup\macc@group\relax}%
  \macc@set@skewchar\relax
  \let\mathaccentV\macc@nested@a
  \macc@nested@a\relax111{#1}%
  \endgroup
}
\newtheorem{theorem}{Theorem}[section]
\newtheorem{lemma}{Lemma}[section]
\newtheorem{proposition}{Proposition}[section]
\newtheorem{corollary}{Corollary}[section]
\newtheorem{definition}{Definition}[section]
\newtheorem{remark}{Remark}[section]
\newtheorem{example}{Example}[section]
\renewcommand{\theequation}{\thesection.\arabic{equation}}
\begin{document}

\title[Phase separating solutions]{Phase separating solutions for two component systems in general planar domains}
  \date{}
 
\author{Micha{\l } Kowalczyk}
\address{Departamento de Ingenier\'{\i}a Matem\'atica and Centro
de Modelamiento Matem\'atico (UMI 2807 CNRS), Universidad de Chile, Casilla
170 Correo 3, Santiago, Chile.}
\email {kowalczy@dim.uchile.cl}
\thanks{M. Kowalczyk was partially funded by Chilean research grants FONDECYT 1210405 and ANID projects ACE210010 and FB210005. He also acknowledges the hospitality of the Sapienza University in Rome where part of this work was done during his visit in March 2019.  A.~Pistoia and G. Vaira  were  partially supported by project Vain-Hopes within the program VALERE: VAnviteLli pEr la RicErca}

\author{Angela Pistoia}\address{Dipartimento SBAI, Sapienza Università  di Roma, via Antonio Scarpa 16, 00161 Roma, Italy.}
\email {angela.pistoia@uniroma1.it}

\author{Giusi Vaira}\address{Dipartimento di Matematica, Università   di Bari, Via Edoardo Orabona 4, 70125 Bari, Italy.}
\email {giusi.vaira@uniba.it}

\begin{abstract}
In this paper we consider a two component system of coupled non linear Schr\"odinger  equations modeling the phase separation in  the binary mixture of  Bose-Einstein condensates and other related problems. Assuming the existence of solutions in the limit of large interspecies scattering length $\beta$ the system reduces to a couple of scalar problems on subdomains of pure phases \cite{nttv}. Here we show that given a solution to the limiting problem under some additional non degeneracy assumptions there exists a family of solutions parametrized by  $\beta\gg 1$. 
\end{abstract}

\maketitle
  \section{Introduction}

\subsection{Motivation}

The  Gross–Pitaevskii system  \cite{15,23} consisting of two  coupled nonlinear Schr\"odinger equations,
\begin{equation}\label{gp}\begin{aligned}&-\boldsymbol{\iota}\frac\partial{\partial t}\Phi_j=\Delta\Phi_j-V_j(x)\Phi_j-\mu_j|\Phi_j|^2\Phi_j-\sum\limits_{i\not=j}\beta_{ij}|\Phi_i|^2\Phi_j,\ x\in\Omega,\ t>0,\\
& \Phi_j=\Phi_j(x,t)\in\mathbb C,\ \Phi_j(x,t)=0,\ x\in\partial\Omega,\ j=1,2\end{aligned}\end{equation}
is  a mathematical model for the binary Bose–Einstein condensate for the unknown condensate wave functions
$\Phi_j$, $j=1,2$. Here $\Omega$ is a bounded smooth domain in $ \mathbb R^2$, and the nonnegative constants
$\mu_j$’s and $\beta_{ij}’s$ are the intraspecies and interspecies scattering lengths which represent the interactions between like and unlike particles, respectively. 
It is natural to assume that $\beta_{ij}$'s are symmetric, i.e. $\beta_{ij}=\beta_{ji}$ if $j\not=i.$
The functions $V_j(x)$, $j=1,2$, represent the magnetic trapping potentials. 
To find solitary wave solutions of the system \eqref{gp}, we set $\Phi_j(x,t)=e^{-{\boldsymbol\iota}\lambda_jt}u_j(x),$  $\lambda_j\in\mathbb R$ and $u_j\in\mathbb R.$
Then we may transform the system \eqref{gp} into a  system of semilinear elliptic equations given by
\begin{equation}\label{s-gp}-\Delta u_j+(V_j(x)+\lambda_j)u_j=\mu_ju_j^3+\sum\limits_{i\not=j}\beta_{ij}u_i^2u_j,\ x\in\Omega,\ 
u_j(x,t)=0,\ x\in\partial\Omega\quad  j=1,2,\end{equation}
which are time independent vector Gross-Pitaevskii/Hartree-Fock equations \cite{11,12} for the condensate wave
functions $u_j$. It was shown in  \cite{25} that there are two distinct scenarios of spatial separation: (i) potential separation, caused
by the external trapping potentials in much the same way that gravity can separate fluids of different specific weight; 
(ii) phase separation, which persists in the absence of external potentials. In the fluid analogy, phase separated
condensates can be compared to a system of two immiscible fluids, such as oil and water.

Actually, in a binary mixture of Bose–Einstein condensates  in the absence of external potentials,  i.e. $V_i=0$, a segregation phenomena  occurs
when    intra species scattering lengths $\mu_j$
are constants and the parameter $\beta:=\beta_{12}$ is   large.  In this case the two states  repel each other and form segregated
domains like the mixture of oil and water.  Such a phenomenon is called phase separation of a binary mixture of
Bose–Einstein condensates  and has been investigated extensively by experimental and theoretical physicists
(\cite{15,22,25}). From a mathematical point of view, a lot of work has been done to study the segregation phenomena (\cite{cl,ctv,sz,tt,ttvw,tv}).
In particular, in \cite{nttv} the authors find the governing equations of the limiting functions  of the bound state solutions of the system \eqref{s-gp} as $\beta\to\infty$:
  if $u_{1,\beta}$ and $u_{2,\beta}$ are $L^\infty(\Omega)-$uniformly bounded solutions of
 \begin{equation}\label{s-be}
  \left\{\begin{aligned}
 &-\Delta u_{1,\beta} + \lambda_1u_{1,\beta}=\mu_1u_{1,\beta}^3-\beta u_{1,\beta}u_{2,\beta}^2\quad \hbox{in}\ \Omega, \\
 &-\Delta u_{2,\beta}+\lambda_2u_{2,\beta}=\mu_2u_{2,\beta}^3-\beta u_{2,\beta}u_{1,\beta}^2\quad \hbox{in}\ \Omega, \\
 &u_{1,\beta}=u_{2,\beta}=0\ \hbox{on}\ \partial\Omega.
 \end{aligned}\right.
  \end{equation}
then, up to a subsequence, as $\beta$ approaches $+\infty$ they converge in $C^{0,\alpha}(\overline\Omega)\cap H^1(\Omega)$ to a pair of functions $u_1$ and $u_2$ having compact disjoint supports (namely $u_1 u_2\equiv 0$ in $\Omega$) which solve
 \begin{equation}\label{s-be-lim}
  \left\{\begin{aligned}
 &-\Delta u_1 +\lambda_1u_1=\mu_1u_1^3 \quad \hbox{in}\ \Omega\cap \{u_1>0\}, \\
 &-\Delta u_2+\lambda_2u_2=\mu_2u_2^3 \quad \hbox{in}\ \Omega\cap \{u_2>0\}, \\
 &u_1 =0\ \hbox{on}\ \partial\(\Omega\cap \{u_1>0\}\),\ u_2 =0\ \hbox{on}\ \partial\(\Omega\cap \{u_2>0\}\).
 \end{aligned}\right.
  \end{equation}
  
It is quite natural to ask if {\em any solutions to the limiting equation \eqref{s-be-lim}  can be seen as the  limiting functions of a bound state solutions of the system \eqref{s-be}}. In  the present paper we  address this question and give a positive answer.\\
\subsection{Statement of the result}  
 Our objective is to  construct  
 phase separating solutions of the more general (than (\ref{s-be}))  system
  \begin{equation}\label{s}
  \left\{\begin{aligned}
 &-\Delta u_1=f(u_1,x)-\beta u_1u_2^2\quad \hbox{in}\ \Omega, \\
 &-\Delta u_2=f(u_2,x)-\beta u_2u_1^2\quad  \hbox{in}\ \Omega, \\
 &u_1=u_2=0\ \hbox{on}\ \partial\Omega.
 \end{aligned}\right.
  \end{equation}
 as the parameter $\beta$ is large enough. Here $\Omega$ is a bounded,  open domain in  $\mathbb R^2$ with  smooth boundary,
   $f\colon \R^2\times\Omega\to \R$ is   sufficiently smooth  and odd in the first variable 
 \begin{equation}
 \label{hip:f odd}
 f(u, x)=-f(-u,x).
 \end{equation}
 and $f$ {\it separates phases} in the following sense:
  
 \begin{definition}\label{def: segregate}
 We say  that the  function $f\in C^{1}(\R\times\Omega)$ separates phases in $\Omega$
 if  the problem
    \begin{equation}\label{p}
 -\Delta w=f(w,x)\ \hbox{in}\ \Omega,\ w=0\ \hbox{on}\ \partial\Omega
  \end{equation}
 has a solution $w$ such that   $\Gamma=\{x\mid w(x)=0\}\subset\Omega$ is a regular, simple closed curve dividing $\Omega$ into disjoint, open  components $\Omega_i$, $i=1,2$ with $\partial\Omega_i\cap \Omega=\Gamma$, $\Omega=\Gamma\cup\Omega_1\cup\Omega_2$  and 
 \begin{equation}\label{omega}
   \partial_\nu w(x):=\omega(x) > 0,
   \quad \hbox{on}\  \Gamma,
   \end{equation}
  where $\nu$ denotes the choice of the unit normal to $\Gamma$ exterior to the fixed component of $\Omega$.
\end{definition}
To explain the definition let us suppose that $\nu$ is the exterior to $\Omega_2$ so that $w>0$ in $\Omega_1$ and $w<0$ in $\Omega_2$. Since $f$ is odd in the first variable it holds
$$
-\Delta w =f(w, x), \quad \mbox{in}\quad \Omega_1\quad\hbox{and}\quad
-\Delta (-w) =-f(-w,x),   \quad \mbox{in}\quad \Omega_2.
$$
The vector function 
\begin{equation}
\label{def: w0}
\bs w^0=(w_1, w_2),\qquad w_1(x)= w(x)\mathbbm{1}_{\Omega_{1}}(x)\quad \hbox{and}\quad w_2(x)= -w(x)\mathbbm{1}_{\Omega_{2}}(x),
\end{equation}
would be a smooth solution to (\ref{s}) if not for the jump of the derivative along $\Gamma$.  
As we will show in the rest of the paper modifying $\bs w^0$ suitably near $\Gamma$ the non-continuity of its derivative can be remedied by adding to it a very small function.  The solution of (\ref{s}) obtained this way represents a two component system whose phases are separated along $\Gamma$. To carry out the construction we also need the following  non-degeneracy condition.

  \begin{definition}\label{def nondegener}
 We  say that the phase separating  solution to the  problem (\ref{p}) is non-degenerate if:  
 \begin{itemize}
 \item[(a)]
 Each of the following linear problems 
  \begin{equation}\label{def-non-de_0} 
  \begin{aligned}
    -\Delta \psi &=f_{u}(w_i,x) \mathbbm{1}_{\Omega_{i}}\psi\quad  \hbox{in}\ \Omega_i,\\
    \psi&=0 \quad \hbox{on}\quad \partial\Omega_i,
        \end{aligned} 
  \end{equation}
$i=1,2$, has only the trivial solution. 
\item[(b)]
The problem 
 \begin{equation}\label{def-non-de} 
  \begin{aligned}
    -\Delta \psi &=\left[f_{u}(w_1,x) \mathbbm{1}_{\Omega_{1}}+f_{u}(w_2,x) \mathbbm{1}_{\Omega_{2}} \right]\psi\quad  \hbox{in}\ \Omega,\\
    \psi&=0 \quad \hbox{on}\quad \partial\Omega,
        \end{aligned} 
  \end{equation}
has only the trivial solution. 
\end{itemize}

 \end{definition}


With all the above taken into account the main result of this paper is:
\begin{theorem}\label{thm:main}
Suppose that the function $f\in C^{3,\gamma}(\R\times\Omega)$,  with some $0< \gamma< 1$,  and that  $w\in H^s(\Omega)$, $s>11/2$  is a  non-degenerate  solution to the  problem  (\ref{p}). Then the system (\ref{s}) has a solution $(u_1, u_2)$ such that as $\beta \to \infty$
\begin{equation}\label{est: calpha}
\|w_i-u_i\|_{C^\alpha(\Omega)}=\mathcal O(\beta^{-(1-\alpha)/4}), \qquad 0\leq \alpha< 1,
\end{equation}
and 
\begin{equation}\label{est: c2alpha}
\|w_i  -u_i\|_{C^{2,\alpha}(K)} =\mathcal O(\beta^{-1/4}),
\end{equation}
over the compacts  $K\subset \Omega\setminus \Gamma$ with the additional restriction $0<\alpha<\frac{1}{2}$.
\end{theorem}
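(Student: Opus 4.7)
The plan is to construct an approximate solution $\mbV=(V_1,V_2)$ that agrees with the naive gluing $\bs w^0$ from (\ref{def: w0}) outside a thin neighborhood of $\Gamma$ and is a rescaled copy of a canonical one-dimensional transition profile inside that neighborhood, then to find a true solution of (\ref{s}) of the form $\bs u=\mbV+\bs\phi$ with $\bs\phi$ small. The natural layer thickness is $\beta^{-1/4}$, fixed by balancing $-\Delta$ with the repulsion $\beta u_i u_{3-i}^2$ against the linear growth $w\sim\omega(s)t$ near $\Gamma$; the same scale is already visible in (\ref{est: calpha})--(\ref{est: c2alpha}).

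\textbf{Approximate solution.} Work in Fermi coordinates $(s,t)$ around $\Gamma$ with $t>0$ inside $\Omega_1$, rescale $\tau=\omega(s)^{1/2}\beta^{1/4}t$, and let $(U_0,V_0)$ be the unique (up to $\tau$-translation) positive entire solution of
\begin{equation*}
U_0''=U_0V_0^2,\qquad V_0''=V_0U_0^2\quad\text{on }\R,
\end{equation*}
with $U_0(\tau)\sim\tau_+$, $V_0(\tau)\sim(-\tau)_+$ at $\pm\infty$ and exponential decay in the opposite directions. Set $V_1^{\mathrm{in}}=\omega(s)^{1/2}\beta^{-1/4}U_0(\tau)$, $V_2^{\mathrm{in}}=\omega(s)^{1/2}\beta^{-1/4}V_0(\tau)$, and glue these to $w_1,w_2$ via a smooth cutoff supported on $|\tau|\lesssim\beta^{\eta}$ for a small $\eta>0$. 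Expanding $\Delta$ in Fermi coordinates and Taylor-expanding $f(V_i,x)$---here the hypothesis $w\in H^s$ with $s>11/2$ enters via Sobolev embedding to grant the classical regularity of $w$ (and hence of the curvature of $\Gamma$ and of the higher-order remainders)---yields an error $\mathcal E=\mathcal S(\mbV)$ concentrated in the layer and small in a norm $\|\cdot\|_{**}$ carrying an exponential weight in $\tau$.

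\textbf{Linear theory.} The linearization at $\mbV$ reads
\begin{equation*}
L\bs\phi=\bigl(-\Delta\phi_i-f_u(V_i,x)\phi_i+\beta V_{3-i}^2\phi_i+2\beta V_iV_{3-i}\phi_{3-i}\bigr)_{i=1,2}.
\end{equation*}
Outside the layer the repulsion $\beta V_{3-i}^2$ vanishes and $L$ restricted to $\Omega_i$ becomes $-\Delta-f_u(w_i,x)$ with zero Dirichlet data on $\partial\Omega_i$; its invertibility is exactly the non-degeneracy condition (a) of Definition~\ref{def nondegener}. After the inner rescaling, $L$ converges to the linearization of the profile system around $(U_0,V_0)$, whose kernel is spanned by the translation mode $(U_0',V_0')$, geometrically a normal displacement of $\Gamma$. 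Testing $L\bs\phi=\mathcal E+N(\bs\phi)$ against this direction produces a reduced scalar Jacobi-type equation along $\Gamma$ whose linear part is invertible precisely under condition (b): (b) is the global linearization of the limit problem and governs how the inner translation mode is lifted once matched with the outer data. Combining (a) with the reduced invertibility coming from (b), a blow-up contradiction argument should yield a uniform a priori estimate $\|\bs\phi\|_*\le C\beta^{p}\|L\bs\phi\|_{**}$ for some $p\ge 0$.

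\textbf{Nonlinear step and main obstacle.} With the linear estimate in hand, the equation $L\bs\phi=\mathcal E+N(\bs\phi)$ is solved by the contraction mapping theorem in a ball of size $\mathcal O(\beta^{-1/4})$ in the $*$-norm; the dangerous terms $\beta V_i\phi_{3-i}^2+\beta\phi_i\phi_{3-i}^2$ carry a large $\beta$ prefactor but are absorbed by the layer smallness of $\bs\phi$. Translating back to unweighted spaces, $\bs\phi$ has amplitude $\mathcal O(\beta^{-1/4})$ concentrated in a layer of width $\mathcal O(\beta^{-1/4})$, so that $\|\bs\phi\|_{L^\infty}=\mathcal O(\beta^{-1/4})$ and $\|\nabla\bs\phi\|_{L^\infty}=\mathcal O(1)$; interpolation delivers (\ref{est: calpha}), while on any compact $K\subset\Omega\setminus\Gamma$ the perturbation is $\mathcal O(\beta^{-1/4})$ in every $C^{k,\alpha}$ norm, giving (\ref{est: c2alpha}). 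The principal obstacle is the simultaneous design of the norm pair $(\|\cdot\|_*,\|\cdot\|_{**})$ so that $\mathcal E$ is small with only a polynomial loss, $L$ is uniformly invertible with a matching polynomial loss, and the $\beta$-large nonlinearity is still contractive; the most delicate point is translating the PDE non-degeneracy (b) into invertibility of the reduced ODE-type operator along $\Gamma$, while keeping the matched-asymptotics control of cutoff and curvature errors uniform in $\beta$.
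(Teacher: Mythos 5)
Your overall strategy coincides with the paper's (inner one-dimensional profile scaled at rate $\beta^{-1/4}$, glued to $\bs w^0$ across a layer of width $\mathcal O(\beta^{-1/4}|\ln\beta|)$, followed by a perturbation/fixed-point argument), but there are genuine gaps in the two places where the real work lies. First, the profile system is invariant under \emph{both} translations and dilations, so the inner linearized operator carries, besides the bounded kernel element $\bs X=\bs V'$, the linearly growing mode $\bs Y=\tau\bs V'+\bs V$; to obtain exponentially decaying solutions of the inner linear problem one must impose orthogonality to both (this is the paper's Lemma \ref{lem:decay est} and Corollary \ref{cor:decay est omega}). Correspondingly the paper modulates \emph{two} scalar functions along $\Gamma$ — a normal shift and a local amplitude $b(s)$ (whose leading term $b_0=\sqrt{\omega/A}$ is exactly what matches the inner linear growth to $\partial_\nu w=\omega$) — and the matching corrections $(\zeta_1,b_1)$, $(\zeta_2,b_2)$ at orders $\eps,\eps^2$. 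Your reduction retains only the translation mode $(U_0',V_0')$, so the projected problem along $\Gamma$ is two-dimensional, not scalar, and your scheme has no free unknown to absorb the $\bs Y$-component of the error; as written the Lyapunov--Schmidt step cannot close. Second, the way condition (b) of Definition \ref{def nondegener} enters is asserted rather than derived: in the paper (b) does not appear as invertibility of a Jacobi-type operator obtained by testing against the translation mode, but as invertibility of the sum of Dirichlet-to-Neumann maps $\bs D=\bs D_1+\bs D_2$ on $\Gamma$ (Section \ref{sec dtn}), which is what solves the inner--outer matching system (\ref{sys:dtn}) for the shift and amplitude corrections, and again as solvability of the outer linearized problems (Lemma \ref{lem out 1} and the refinements $w^3,w^4,w^5$). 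It is not clear that your ``reduced scalar Jacobi-type equation'' is equivalent, and no derivation is offered.

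Beyond this, the a priori estimate $\|\bs\phi\|_*\le C\beta^{p}\|L\bs\phi\|_{**}$ ``for some $p\ge0$'' together with the unspecified norm pair is precisely the crux you leave open. In the paper the inner linear theory loses a factor $\eps^{-1+\alpha}$ (or $|\ln\eps|$ with $C^{1,\alpha}$ data) because of the small divisors at Fourier modes $k$ along $\Gamma$ with $\eps\omega_k$ small; these modes require Lagrange multipliers and then modulation equations (Section \ref{sec:modulation}) to remove them, and the approximation must be improved to order $\eps^5$ in the outer region and through second-order matching near $\Gamma$ so that the errors beat these losses in the final fixed point. A leading-order gluing with an unquantified polynomial loss, as proposed, does not by itself yield a contraction, so the argument as stated is a plausible roadmap but not a proof.
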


  \begin{remark}We  believe that the non-degeneracy condition of the phase separating solution as stated in Definition \ref{def nondegener} is true for generic functions $f$ or for generic domains $\Omega$. The proof should rely on some classical transversality argument as in \cite{q,st,u}. 
 \end{remark}

\begin{remark}
Our theorem can be in  extended to the case when $\Gamma$ consists of two or more disjoint, simple closed curves as long as it divides $\Omega$ into two disjoint components. To avoid complicated notations and minor technical difficulties we will carry out the proof for a simple, closed curve.
\end{remark}
 \begin{remark} We conjecture that our result also holds in a $d-$dimensional domain, once  the curve $\Gamma$ \eqref{def: segregate} is replaced by a $(d-1)-$dimensional regular manifold. The construction of the solution close to the $\Gamma$ is  obtained more or less in  the same way. What is much more difficult is the gluing of the one dimensional profile around the manifold with the phase separating solution.  The radial case, i.e. $\Omega$ is the unit ball in $\mathbb R^d,$ $d\ge1$, when  $f$   as in the Example \ref{ex 1.3} does not depend on $x$, 
  has been solved  by Casteras and Sourdis \cite{cs}.\end{remark}
 \begin{remark} In principle our result can be extended to systems with $m$ equations at least in a two-dimensional setting. In this case the function $f$ 
   separates $m$ phases in $\Omega$ according to   \eqref{def: segregate}, i.e. 
    the set   $\Gamma=\{x\mid w(x)=0\}\subset\Omega$ is a nonempty, one dimensional boundary set dividing $\Omega$ into $m$ disjoint, open  components $\Omega_i$, $i=1,\ldots,m$ such that  $\Omega=\(\cup_{i=1}^m\Omega_i\)\cup\(\cup_{i=1}^{m-1}\Gamma_i\),$ $\Gamma_i:=\partial\Omega_i\cap\partial\Omega_{i+1}\cap\Omega$
 and
$    \partial_\nu w(x)=\omega_i(x) > 0$ along each  curve
   $\Gamma_i$.
  The profile of the solution close to each 
curve $\Gamma_i$ is the usual one-dimensional profile and far away it is nothing but the multiphase separating solution.  \end{remark}
 
 Before discussing several examples where our Theorem applies some comments are in place. As for  the smoothness assumption on $f$ we require that $f\in C^{3,\gamma}(\R\times \Omega)$ (these assumptions can be somewhat weakened, see Example \ref{ex 1.3}).
As a consequence  the function $w_1(x)+w_2(x)$ has Lipschitz continuous extension to the whole $\Omega$ and so the function 
\[
f_{u}(w_1,x) \mathbbm{1}_{\Omega_{1}}+f_{u}(w_2,x) \mathbbm{1}_{\Omega_{2}}.
\] In what follows we will need to calculate various functions defined on the curve $\Gamma$ and  their derivatives, for instance  the function  $\omega$, the curvature $\kappa$ of $\Gamma$ etc. 
We will assume, without being specific,  that they are as regular as needed. Also, it will be evident from the discussion that  $w\in H^s(\Omega)$ with   $s\geq 11/2$ is sufficient to guarantee enough smoothness for our calculations. Note that {\it a priori} we have $\omega\in H^{9/2}(\Gamma)\cap C^{3,\alpha}(\Gamma)$, $\alpha\in (0, \frac{1}{2})$.  

 In principle it is possible and physically justified to allow $\Gamma\cap\partial\Omega$ to consist for example of a discrete set of points.  This leads to serious, but not unsurmountable technical difficulties however by definition $\Gamma\cap\partial\Omega=\emptyset$. To keep the paper at reasonable length we chose not to deal  with the case $\Gamma\cap\partial\Omega\neq\emptyset$ here.

The curve  $\Gamma$ divides $\Omega$ into two disjoint components $\Omega_1$ and  $\Omega_2$. We agree that the choice of $\nu$ in (\ref{omega}) is made in such a way that it is outward to the fixed set $\Omega_i$ at all points of $\Gamma$. 

As  stated for the purpose of the definition  the problem (\ref{p}) is supplied with the  homogeneous Dirichlet  boundary conditions on $\partial\Omega$ in order to determine the function $w$. In general the type of boundary conditions  should  depend on the particular physical context. The most common choice would be to impose either the Dirichlet or the Neumann boundary conditions or some combination of the two. Our method allows for some flexibility in the choice of the boundary conditions and can be easily adopted to deal with non homogeneous  Dirichlet boundary conditions 
  \begin{equation}\label{cond:bdry}
 w=g \quad \hbox{on}\quad \partial\Omega.
 \end{equation}

We will now discuss some  examples. 
 \begin{example}\rm Suppose that $f(u,x)=f(u)$ where $f$ is odd and $|f(u)-f'(0) u|\leq c|u|^p$, $p\geq 2$. Let $w$ be a sign-changing solution to
   \begin{equation}\label{p 2}
  \begin{aligned}
 -\Delta w&=f(w)\quad \hbox{in}\quad  \Omega, \\
 w&=0\ \hbox{on}\quad  \partial\Omega.
 \end{aligned}
  \end{equation}
  Assume that
  $ \Gamma:=\{x\in\Omega\mid w(x)=0\}$ is a regular, simple closed curve not intersecting  the boundary $\partial\Omega$.
   Then  we may set  $\Omega_1:=\{x\in\Omega\mid w(x)>0\}$, $\Omega_2=\mathrm{int}\,(\Omega\setminus \Omega_1)$. 
  \end{example}
\begin{example} \rm
Suppose that $f\equiv 0$ and that the domain $\Omega$ is at least doubly connected and  that its boundary consists of smooth,  simple closed curves $G_j$, $j=1,\dots, k$, $k\geq 1$ and $\partial\Omega$. Let us fix one of the components of the boundary, say $G_k$ and consider the following problem
\[
\begin{aligned}
-\Delta w&=0, \quad \mbox{in}\quad \Omega\\
w&=-1, \quad \mbox{on}\quad  \partial\Omega\cup G_1\cup\dots\cup G_{k-1}\\
w&=1, \quad \mbox{on}\quad G_k.\\
\end{aligned}
\]
By the choice of the boundary conditions  $\Gamma=\{x\in\Omega\mid w(x)=0\}$ is non empty and it is reasonable to assume that at least generically it will be a smooth, simple, close curve.  
\end{example}

  \begin{example}\label{ex 1.3}\rm
  Another possible situation of the binary  phase separation is given by the solution of the following system
  \[
  \begin{aligned}
  -\Delta w_1&=g(w_1, x), \quad\mbox{in}\quad \Omega_1\\
  -\Delta w_2&=h(w_2, x), \quad\mbox{in}\quad \Omega_2
  \end{aligned}
  \]
  such that 
  \[
  w_1=0=w_2, \quad\mbox{on} \quad \partial \Omega, \qquad \partial_\nu w_1=\partial_\nu w_2>0, \quad\mbox{on}\quad \Gamma 
  \]
  In this case the function $w=\mathbbm{1}_{\Omega_1} w_1+\mathbbm{1}_{\Omega_2} w_2$ is a weak solution of (\ref{p}) with 
  \[
  f(u,x)=\mathbbm{1}_{\Omega_1} g(u,x)+\mathbbm{1}_{\Omega_2} h(u,x).
  \]
  Strictly speaking the regularity hypothesis in the Definition \ref{def: segregate} may not be satisfied in this case. This can be remedied by assuming additional smoothness of the function $\omega$ and the curve $\Gamma$. If for instance  $\nu$ is exterior to $\Omega_2$  assuming that $h(u,x)$ is odd in $u$ we can define
\[
\bs w^0 =(\mathbbm{1}_{\Omega_1} w_1,-\mathbbm{1}_{\Omega_2} w_2)
\]
as the model of the phase separating solution of  (\ref{s}). Our result generalizes straightforwardly  to  this case at the expense of some additional technical hypothesis. 
  \end{example}
  
The method of the proof of  Theorem \ref{thm:main} is in part motivated by  the approach in \cite{KPV} and relies on careful separation of the problem into the outer equation 
whose solution is approximately $\bs w^0$, given in (\ref{def: w0}), and the inner equation whose one dimensional, leading order solution is given by a suitable scaled solution of the ODE system (\ref{sis}) below. The challenge is to combine them locally, near $\Gamma$ in a smooth way. To this end we will introduce lower order corrections to the inner and the outer solution as well as some  modulations functions. The latter are needed to deal with the problem of small divisors of the linearized inner equation.  

 This paper is organized as follows: in section \ref{sec dtn} we introduce the Dirichlet-to-Neumann map needed to improve the initial approximation and guarantee the  smallness of its error. In sections \ref{subsec: ode}--\ref{sec proof Lemma w} we study the one dimensional, inner solution. Section \ref{sec: 3} is devoted to the construction of the approximate solution. The proof of the main result is carried out in section \ref{sec: thm main}. Lemma  \ref{lem phi} is  proven in section \ref{pr lem phi}.  Finally in section \ref{sec: 5} we prove an auxiliary result needed in the proof of Proposition  \ref{prop: inner lineal}.

 In this paper we will use $c$ to denote a positive constant whose value may change from line to line. The symbol $g_1 \lesssim g_2$ will be used to mean  $g_1\leq cg_2$ when  comparing two quantities.  By $1>\alpha\geq 0$ we will denote the exponent  in the H\"older  norms $C^{k, \alpha}$. We will not specify the precise value of $\alpha$  unless necessary and it may vary from line to line.

  \section{The linear theory for the outer and the inner problem}
  \setcounter{equation}{0}
    {\color{black}{\subsection{The Dirichlet-to-Neumann map and the nondegeneracy condition}\label{sec dtn}

We will show that  the non-degeneracy condition (\ref{def-non-de}) implies invertibility of certain operator defined   in terms of the Dirichlet-to-Neumann maps associated with the linearization of   (\ref{p}) around $w_i$, $i=1,2$. To define this map we introduce two symmetric,   bilinear forms $\bs a_i$ in $H^{1/2}(\Gamma)$ as follows 
  \[
 \bs a_i(g_1,g_2)=\int_{\Omega_i}\nabla \varphi_1\cdot\nabla \varphi_2 - f_{u}(w_i,x)\varphi_1\varphi_2,
 \]
  where $\varphi_{k{|_\Gamma}}=g_k$, and
   \begin{equation}\label{def dton} 
  \begin{aligned}
  -\Delta\varphi_k  &=f_{u}(w_i,x)\varphi_k \quad  \hbox{in}\ \Omega_i,\\
    \varphi_k&=0 \quad  \hbox{on}\ \partial\Omega\cap \partial\Omega_i.
    \end{aligned} 
  \end{equation}
{\color{black}{By the nondegeneracy assumption (Definition \ref{def nondegener} (a)) we see  that $\varphi_k$ satisfying the above conditions is unique, or in other words $g_k\equiv 0$ implies $\varphi_k\equiv 0$. }}  Then, by definition,  $b=\bs D_i(g)$ is the value of the Dirichlet-to-Neumann map of $g$ on $\Gamma$ if 
  \[
 \bs a_i(g,h)=\int_\Gamma bh, \qquad \forall h\in H^{1/2}(\Gamma).
 \] 
 Using the bilinear form $\bs a_i$ and the standard functional analytic argument for $i=1,2$ we can obtain $\bs D_i$ as  densely defined, self-adjoint operator in $L^2(\Gamma)$. The operator we are interested in here is actually the sum of the two $\bs D=\bs D_1+\bs D_2$. Our condition is:
 \begin{equation}
 \label{cond on D}
 \mbox{the operator}\ \bs D\colon \mathrm{Dom}(\bs D)\to L^2(\Gamma)\ \mbox{has bounded inverse}.
 \end{equation} 
We claim  that (\ref{def-non-de}) implies (\ref{cond on D}). Indeed it suffices to show that 
\[
\bs D(g)=0\Longrightarrow g\equiv 0.
\]
By the definition of the  maps $\bs D_i$  there exist functions $\varphi_i\in H^2(\Omega_i)$, $\varphi_{i{|_\Gamma}}=g$ such that 
\[
\int_\Gamma (\partial_{n_1}\varphi_1+\partial_{n_2}\varphi_2)h=0, \qquad \forall h\in H^{1/2}(\Gamma).
\]
It follows that 
\[
\varphi=\varphi_1\mathbbm{1}_{\Omega_{1}}+\varphi_2\mathbbm{1}_{\Omega_{2}}
\]
is a weak solution of (\ref{def-non-de}) and thus $\varphi\equiv 0$ {\color{black}{(condition (b) in Definition \ref{def nondegener})}} which proves the claim. 
  }}

   \subsection{The ODE system}\label{subsec: ode}
  
 In this section we describe some basic properties of the system 
 \begin{equation}\label{sis}
  \left\{\begin{aligned}
 -V_1''+V_1V_2^2&=0\quad \hbox{in}\ \mathbb R, \\
 -V_2''+V_2V_1^2&=0\quad  \hbox{in}\ \mathbb R.\\
 \end{aligned}\right.
  \end{equation}
 In \cite{blwz} it has been proved that there exists a unique solution to this system such that $V_1,V_2 >0$, in $\mathbb R$ and
   $V_1(0)=V_2(0)=1,$  $V_1(t)=V_2(-t),$  
   $V'_1(t)>0,$  
  \begin{equation}\label{v}
  \begin{aligned}
  V_1(t)&=A t +B+\mathcal O\(e^{-ct^2}\)\ \hbox{as}\ t\to+\infty,\\ 
  V_1(t)&= \mathcal O\(e^{-ct^2}\)\ \hbox{as}\ t\to-\infty,
  \end{aligned}
  \end{equation}
   for some $A>0,$  $B \in\mathbb R$ and $c>0.$
 
 Let $\bs L_0$ be the linearization of (\ref{sis}) around $\bs V=(V_1, V_2)$:
  \[
 \bs {L}_0 =\left(\begin{array}{cc} -\frac{d^2}{dx^2}+ V_2^2 & 2 V_1 V_2\\
 2 V_1 V_2 & -\frac{d^2}{dx^2}+ V_1^2 
 \end{array}\right).
 \]
Note that (\ref{sis}) is invariant under translation $\tau\mapsto \bs V(\cdot- \tau)$ and scaling $\mu\mapsto \mu\bs V(\mu\cdot)$.   
  This has a simple consequence for the  homogeneous system
   \begin{equation}\label{sis-lin}
  \bs L_0 \bs Z=0, \quad \hbox{in}\quad \R, \quad 
  \bs Z=(Z_1, Z_2).
  \end{equation}
  Indeed 
    \begin{equation}
    \label{sis-lin-sol}
    \bs X=\bs V'
   \qquad \hbox{and}\qquad  \bs Y=t \bs V'+\bs V, \end{equation}
   which are the derivatives of the solution with respect to the invariance parameters, belong to the fundamental set of the linearized operator.
More precise information about this operator, which was proven in \cite{blwz} (Proposition 5.2) is summarized below. 
\begin{lemma}
\label{lin properties}
\begin{enumerate}
\item The only bounded solution of the problem $\bs L_0 \bs Z=0$ in $\bR$ is $\bs X=\bs V'$. 
\item The solution $\bs V$ is stable, in other words for any $\bs f\in C^\infty_0(\bR)$ we have
\[
\langle \bs L_0 \bs f, \bs f\rangle\geq 0.
\]
\end{enumerate}
\end{lemma}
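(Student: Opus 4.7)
The plan is to prove (2) first via a Picone-type factorization of the quadratic form, then to deduce (1) by applying the same identity to cutoffs of the bounded solution. By the properties of $\bs V$ in (\ref{v}) together with the symmetry $V_2(t)=V_1(-t)$, we have $V_1'>0$ and $V_2'<0$ everywhere, so for $\bs f=(f_1,f_2)\in C^\infty_0(\bR)$ I would substitute $f_i=V_i'g_i$ with $g_i\in C^\infty_0$. Expanding $\langle\bs L_0\bs f,\bs f\rangle$, integrating by parts the cross terms $\int 2V_i'V_i''g_ig_i'=-\int(V_i'V_i'')'g_i^2$, and using the identity $V_1'''=V_1'V_2^2+2V_1V_2V_2'$ together with its symmetric counterpart (both obtained by differentiating (\ref{sis})), the potential and cross contributions telescope into the Picone identity
\[
\langle\bs L_0\bs f,\bs f\rangle=\int_\bR (V_1')^2(g_1')^2+(V_2')^2(g_2')^2-2V_1V_1'V_2V_2'(g_1-g_2)^2\,dt.
\]
Since $V_1V_1'V_2V_2'<0$, every summand is non-negative, which proves stability; equality moreover forces $g_1'=g_2'=0$ and $g_1=g_2$, so the quadratic form degenerates only on multiples of $\bs X=\bs V'$.

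For part (1), given a bounded solution $\bs Z$ I pick cutoffs $\eta_R\in C^\infty_0(\bR)$ equal to $1$ on $[-R,R]$, supported in $[-2R,2R]$, with $|\eta_R'|+R|\eta_R''|\lesssim 1/R$. Since $\bs L_0\bs Z=0$, a direct commutator computation gives
\[
\langle\bs L_0(\eta_R\bs Z),\eta_R\bs Z\rangle=\int_\bR(\eta_R')^2|\bs Z|^2\lesssim \|\bs Z\|_\infty^2/R\longrightarrow 0.
\]
Applying the Picone identity to $\bs f=\eta_R\bs Z$ with $g_i=Z_i/V_i'$ (well-defined pointwise since $V_i'$ is sign-definite), one reads off the same expression on the right-hand side, with the boundary piece $\int(\eta_R')^2|\bs Z|^2$ extracted. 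After cancellation and one further integration by parts of the residual mixed terms, monotone convergence yields
\[
\int_\bR (V_1')^2(g_1')^2+(V_2')^2(g_2')^2-2V_1V_1'V_2V_2'(g_1-g_2)^2\,dt=0,
\]
which, by the sign analysis of (2), forces $g_1\equiv g_2\equiv c$ and hence $\bs Z=c\bs X$.

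The main obstacle is controlling the remainder integrals of the form $\int(V_i')^2\eta_R\eta_R'g_ig_i'$ on the annuli where $\eta_R'\neq 0$. Since $V_2'\to 0$ super-exponentially as $t\to+\infty$ (and $V_1'\to 0$ at $-\infty$), the ratio $g_i=Z_i/V_i'$ could a priori blow up, in which case the remainders need not vanish. To close the argument one must first extract, directly from (\ref{sis-lin}), asymptotic decay rates for bounded solutions: at $+\infty$ the $Z_2$-equation degenerates to $Z_2''\approx V_1^2 Z_2$ with $V_1\sim At+B$, whose only bounded mode decays super-exponentially at a rate at least as fast as $V_2'$, so $g_2$ stays bounded; the symmetric analysis at $-\infty$ controls $g_1$, and the $Z_1$-equation likewise has a unique polynomial mode that is forced to be constant for bounded data. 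With these pointwise asymptotics the residual terms are $O(R^{-1})$, the limit in the Picone identity is justified, and both assertions of the lemma follow.
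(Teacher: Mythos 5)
The paper itself gives no proof of this lemma: it is quoted from \cite{blwz} (Proposition 5.2), so there is no in-paper argument to compare with and your proposal must be judged on its own terms. Your part (2) is complete and correct: since $V_1'>0$ and $V_2'<0$ everywhere, the substitution $f_i=V_i'g_i$ is legitimate, and using $V_1'''=V_1'V_2^2+2V_1V_2V_2'$ (and its counterpart) obtained from \eqref{sis}, the quadratic form does collapse to $\int (V_1')^2(g_1')^2+(V_2')^2(g_2')^2-2V_1V_2V_1'V_2'(g_1-g_2)^2$, in which the last coefficient is positive; this ground-state/Picone computation is a clean and valid proof of stability.

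Part (1), however, is not complete as written, and you have correctly located where the difficulty sits. The cutoff argument hinges on decay of an arbitrary bounded kernel element on its degenerate side: you need Gaussian-type decay of $Z_2$ and $Z_2'$ as $t\to+\infty$ (and of $Z_1$, $Z_1'$ as $t\to-\infty$), together with $Z_1'\to 0$ at $+\infty$ (symmetrically for $Z_2'$), in order that the annulus terms $\int\eta_R\eta_R'(V_i')^2g_ig_i'$ vanish; note $(V_2')^2g_2g_2'=Z_2Z_2'-Z_2^2\,V_2''/V_2'$ with $V_2''/V_2'\sim -At$, so mere boundedness of $Z_2$ is not enough, and a priori $g_2=Z_2/V_2'$ could grow like $e^{ct^2}$, in which case the limit cannot be closed. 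These decay facts are asserted, not proved, and the specific claim that the bounded mode decays ``at least as fast as $V_2'$'' is stronger than what a variation-of-parameters analysis of $Z_2''=V_1^2Z_2+2V_1V_2Z_1$ actually yields (the particular solution may lose polynomial factors relative to $V_2'$), although any Gaussian decay suffices for your estimates since the weights are Gaussian-small there. So the decisive missing ingredient is a lemma stating that every bounded solution of $\bs L_0\bs Z=0$ has $Z_2=\mathcal O(e^{-c t^2})$ as $t\to+\infty$ and $Z_1=\mathcal O(e^{-ct^2})$ as $t\to-\infty$; it can be established by standard barrier/comparison arguments using $V_1(t)\ge mt+l$ for $t\ge 0$ and $V_2=\mathcal O(e^{-ct^2})$ (first an exponential barrier, then a Gaussian one), while $Z_1'\to 0$ at $+\infty$ follows from integrability of $Z_1''$ and boundedness of $Z_1$. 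With that lemma supplied, your Picone-with-cutoffs argument closes correctly and gives $\bs Z=c\,\bs V'$.
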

   
For future use we point out that $V'_1(t)=-V_2'(-t)$  and  
  \begin{equation}\label{z1}
  \begin{aligned}
  V'_1(t)&=A+ \mathcal O\(e^{-ct^2}\)\ \hbox{as}\ t\to+\infty,\ \\ 
  V'_1(t)&= \mathcal O\(e^{-ct^2}\)\ \hbox{as}\ t\to-\infty,
  \end{aligned}
  \end{equation}
  and
   \begin{equation}\label{z2}
   \begin{aligned}
  tV'_1(t)+V_1(t)&=2At+B +\mathcal O\(e^{-ct^2}\)\ \hbox{as}\ t\to+\infty,\ \\ 
   tV'_1(t)+V_1(t)&= \mathcal O\(e^{-ct^2}\)\ \hbox{as}\ t\to-\infty.
   \end{aligned}
   \end{equation}
Our aim is to build a positive solution $(u_1,u_2)$ to \eqref{s} which looks like  $(w_1,w_2)$ far away from $\Gamma$ (the outer solution) and 
a suitable scaling of $(V_1,V_2)$  close to $\Gamma$ (the inner solution).

 \subsection{The inner linear operator} \label{sec: line oper}

We recall  that $\Gamma$  is a regular, simple closed curve. Denote its  length  by $|\Gamma|$.  By $s\in [0, |\Gamma|]$ we will denote the arc length parameter on $\Gamma$.  Let $b_0\colon [0, |\Gamma|]\to \bR$ be  defined by
\begin{equation}
\label{b}
b_0(s)=\sqrt{\omega(s)\over A},
\end{equation}  
see \eqref{omega} for the definition of the function $\omega$ and  \eqref{v} for the definition of the constant $A$.
For notational reasons it is convenient to introduce a small parameter
\[
\epsilon=\frac{1}{\qb}.
\]
Additionally we introduce two more periodic  functions $b_1, b_2\in C^{3,\alpha}([0,|\Gamma|])$ and define
\[
b(s)=b_0(s)+\eps b_1(s)+\eps^2 b_3(s).
\]
The results of the present section do not depend on the specific properties of the function $b_1, b_2$ and we postpone their precise definitions until section  \ref{sec:match}.
Let $\pC$ be the right cylinder $\R\times[0, |\Gamma|]$. 
Denote
\begin{equation}
\label{def:inner lin oper}
 \bs {\mathfrak L} =\left(\begin{array}{cc} -\frac{\partial^2}{\partial x^2}-\eps^{2}b^{-2}\left(y\right)\frac{\partial^2}{\partial y^2}+ V_2^2 & 2 V_1 V_2\medskip\\
 2 V_1 V_2 & -\frac{\partial^2}{\partial x^2}-\eps^{2}b^{-2}\left(y\right)\frac{\partial^2}{\partial y^2}+ V_1^2 
 \end{array}\right)
 \end{equation}
In this section we will study the  equation 
\begin{equation}
\begin{aligned}
\bs {\mathfrak L}\bs \varphi&=\bs g, \qquad \mbox{in}\qquad \pC.
\end{aligned}
\label{eq: inner 1}
\end{equation}
In other words we suppose that  the function $\bs \varphi$ satisfies  periodic boundary conditions at  $y=0$ and $y=|\Gamma|$.


Consider the following eigenvalue problem
\begin{equation}
\label{eq: per 1}
\begin{aligned}
-\psi''=\omega^2  b^2(y) \psi, &\qquad \mbox{in}\qquad (0,  |\Gamma|),\\
\psi(0)=\psi(|\Gamma|), &\qquad  \psi'(0)=\psi'(|\Gamma|).
\end{aligned}
\end{equation}
We denote  the eigenvalues and eigenfunctions respectively by $\omega^2_{k}$ and $\psi_{k}$, $k=0, \dots$.
We note that $\omega_0=0$ is the eigenvalue corresponding to $\psi_0\equiv\frac{1}{\sqrt{|\Gamma|}}$. {\color{black}{For large $k$, by Weyl's formula \cite{leviatan} (Corollary 1.41), we have
\begin{equation}
\label{eq: per 2}
\omega_{2k}^2=\left(\frac{2\pi k}{\ell_0}\right)^2+\mathcal O\left(\frac{1}{k^2}\right),\qquad \omega_{2k-1}^2=\left(\frac{2\pi k}{\ell_0}\right)^2+\mathcal O\left(\frac{1}{k^2}\right)\qquad k\to \infty,
\end{equation}
with 
\[
\ell_0=\int_0^{|\Gamma|} |b(s)|\,ds.
\]
It is then convenient to relabel the eigenvalues using  the map
\[
2k\longmapsto k,\qquad 2k-1\longmapsto -k.  
\]
Accordingly, from now on we will denote the eigenvalues by $\omega_k$ and eigenfunctions by $\psi_k$ with  $k\in \mathbb Z$. 
}}

First we will need some preliminary results.
Given a function $\bs g\in L^2(\pC)$, we expand it  in terms of the eigenfunctions of (\ref{eq: per 1}):
\begin{equation}
\label{exp:g}
\bs g(x,y)=\sum_{{\color{black}{k=-\infty}}}^\infty \bs g_k(x) \psi_{k}(y).
\end{equation}
We look for a solution of (\ref{eq: inner 1}) separating  variables in the form
\begin{equation}
\label{eq: inner 2}
\begin{aligned}
\bs L_{\eps\omega_{k}}\bs \varphi_k&=\bs g_k, \qquad \mbox{in}\qquad \pC,
\end{aligned}
\end{equation}
where
 \[
 \bs {L}_\omega =\left(\begin{array}{cc} -\frac{d^2}{dx^2}+ V_2^2 & 2 V_1 V_2\\
 2 V_1 V_2 & -\frac{d^2}{dx^2}+ V_1^2 
 \end{array}\right)+\omega^2\Id
 \]
and $\omega\geq 0$. 
In what follows  we will study the following Dirichlet problem:
 \begin{equation}
 \label{R problem}
 \begin{aligned}
 \bs L_\omega \bs \varphi &=\bs g, \qquad \mbox{in}\ (-R, R),\\
 \bs \varphi(\pm R)&=0,
 \end{aligned}
 \end{equation}
 where $R>0$. A solution of  (\ref{eq: inner 2})   will be obtained taking $R\to \infty$. 
{ \color{black}{Below we will show  that $\bs L_\omega$ satisfies the $(\psi)$ property (\cite{defig} Definition 1.1)  hence the existence and uniqueness of solutions for (\ref{R problem}).} }
 Our goal is to obtain uniform in $\omega\in [0,\infty)$ estimates for the solution and for this purpose we will study (\ref{R problem}) in the case when $\bs g\in C^\alpha((-R,R))$ is such that its weighted H\"older norm defined by
 \[
 \|\bs g\|_{C^\alpha_\theta((-R,R))}=\|\bs g\cosh(\theta x)\|_{C^0((-R,R))}+\sup_{-R<x<R}\cosh(\theta x)\left[\bs g\right]_{\alpha,(x, x+1)}
 \]
is bounded. Obviously if $\bs g \in C^\alpha_\theta((-R,R))$ then
\[
|\bs g(x)|\leq  \|\bs g\|_{C^\alpha_\theta((-R,R))} \sech(\theta x).
\]
{\color{black}{Now we recall the basic fact about the systems satisfying the $(\psi)$ property (\cite{defig} Theorem 1.2) , which in the case at hand is the comparison principle of the form:
\[
\bs L_\omega \bs u=\bs F\geq 0, \quad \bs u(\pm R)= 0\Longrightarrow \bs u\geq 0.
\]
To check the $(\psi)$ property  we define $\bar{\bs u}(x)=(\bar u_1(x), \bar u_2(x))$ by 
\begin{equation}
\label{def bar u}
\begin{aligned}
\bar u_1(x)&= K \int _{-\infty}^x\!\int_{-\infty}^y \sech(a t)\,dtdy+M\int_{-\infty}^x \sech(a y)\,dy, \\
 \bar u_2(x)&= K \int ^{\infty}_x\!\int^{\infty}_y \sech(a t)\,dtdy+M\int^{\infty}_x \sech(a y)\,dy.
\end{aligned}
\end{equation}
{\color{black}{Note that $\bar{\bs u}(x)> 0$ if $K,M$ are positive}}.  Because of the symmetry of the operator $\bs L_\omega$ due to  $V_1(x)=V_2(-x)$ to check that  positive constants $a$, $K, M$,  can  be chosen
so that $\bar{\bs u}$ is a {\color{black}{positive}} supersolution of our problem it suffices to do this for $x\geq 0$.  
We have
\[
\begin{aligned}
\bar u''_1(x)&=K\sech(ax)-Ma \tanh(ax)\sech(ax),\\
\bar u''_2(x)&=K\sech(ax)+Ma \tanh(a x)\sech(a x),
\end{aligned}
\]
and using 
\[
V_1(x)\geq (mx+l), \quad m, l>0, x>0, 
\]
and \[
  V^2_2(x)\geq \frac{1}{2}\mathbbm{1}_{[-\delta, \delta]}(x), \qquad \mbox{for some}\qquad \delta>0,
  \]
we find   
\[
\begin{aligned}
\bs L_\omega\bar{\bs u}&\geq \left(\begin{array}{c}
-K\sech(ax)+M a \tanh(a x)\sech(a x)+V_2^2\left(K \int _{-\infty}^x\!\int_{-\infty}^y \sech(a t)\,dt+M\int_{-\infty}^x \sech(a y)\,dy\right)\medskip\\
-K\sech(ax)-Ma \tanh(a x)\sech(a x)+V_1^2  \left(K \int ^{\infty}_x\!\int^{\infty}_y \sech(a t)\,dt+M\int^{\infty}_x \sech(a y)\,dy\right)
\end{array}
\right)\\
&\geq  \left(\begin{array}{c} \mathbbm{1}_{[-\delta, \delta]^c}(x)\left(-2K+Ma \tanh(ax)\right) e^{\,-ax}+\frac{1}{2}\mathbbm{1}_{[-\delta, \delta]}(x)\left(K a^{-2}+Ma^{-1}-4K+Ma \tanh(ax)\right)\medskip
\\
\left[-2(K+Ma) +(mx+l)^2\left(K a^{-2} +Ma^{-1} \right)\right] e^{\,-a x}
\end{array}\right)
\end{aligned}
\]
Take 
\begin{equation}
\label{def M}
M=\frac{8K}{a\tanh(a\delta)}, \qquad a\leq \min\left\{1/4,l\tanh^{1/2}(\delta a)(2\tanh(\delta a)+8)^{-1/2}\right\}
\end{equation}
to get
\begin{equation}
\label{eq: comp 1} 
\bs L_\omega\bar{\bs u}\geq  CK e^{\,-ax}\bs 1, \qquad x\geq 0,
\end{equation}
where $C$ is a constant depending on $a$ and $\delta$ and $\bs 1=(1,1)$. The constant $K$ at this point is  arbitrary.
Similar argument for $x\leq 0$ gives
\[
\bs L_\omega\bar{\bs u}\geq  CK \sech(ax)\bs 1, \qquad -\infty<x<\infty.
\]
This shows that the $(\psi)$ property holds.

Denote by $\bs\varphi_R$ the solution of (\ref{R problem}) with the right hand side $\bs g_R\in C^\alpha_\theta((-R, R))$ such that $ \|\bs g\|_{C^\alpha_\theta((-R,R))}=1$. Restricting $a$ further if necessary 
\[
a\leq\min  \left\{1/4,l\tanh^{1/2}(\delta a)(2\tanh(\delta a)+8)^{-1/2}, \theta\right\},
\]
and taking $K$ large we get
\[
\bs L_\omega (\bar{\bs u}-\bs \varphi_R)(x)\geq 0, \qquad -R<x<R.
\]
Using the comparison principle we find
\[
\bs \varphi_R(x)\leq C\bar{\bs u}(x).
\]
The lower bound can be obtained similarly. Finally, with $\bs g_R\in C^0_\theta((-R, R))$ arbitrary  we find 
\begin{equation}
\label{eq: comp 2}
|\bs \varphi_R(x)|\leq C\|\bs g_R\|_{C^0_\theta((-R,R))}|\bar{\bs u}(x)|.
\end{equation}
}}

In case that  the functions $\bs g_R$ converge in $C^0_\theta(\R)$ locally (e.g. $\bs g_R=\mathbbm 1_{[-R,R]}\bs g$ for some $\bs g\in  C^0_\theta(\R)$) we can use the estimate (\ref{eq: comp 2}), standard regularity arguments and the Arzela-Ascoli Theorem to pass to the limit locally in $C^0_\theta(\R)$ for $\bs \varphi_R$ and get a weak solution to 
\begin{equation}
\label{eq: comp 3}
\bs L_\omega \bs\varphi=\bs g, \qquad \mbox{in} \qquad \R,
\end{equation}
such that
\begin{equation}
\label{eq: comp 4}
|\bs \varphi(x)|\leq C\|\bs g\|_{C^0_\theta(\R)}|\bar{\bs u}(x)|.
\end{equation}  
Again, standard arguments give that $\bs \varphi$ is a classical solution. 
From (\ref{eq: comp 4}) we see that the function $\bs \varphi$ is locally bounded by $C\|\bs g\|_{C^0_\theta(\R)}$ but in principle $|\varphi_1(x)|$ ($|\varphi_2(x)|$) may grow linearly as $x\to \infty$ ($x\to -\infty$ respectively). We will show that this does not happen when $\omega=0$ and  if we suppose additionally that 
\begin{equation}
\label{cond:orto g}
\int_\R  \bs g\cdot \bs X\,dx=0, \qquad \int_\R  \bs g\cdot \bs Y\,dx=0
\end{equation}
(Recall $\bs X=\bs V'$, $\bs Y=( x\bs V'+\bs V)$). \blu{The following Lemma can be found in \cite{sourdis 1}. For completeness we include it here together with an alternative demonstration. } 
\begin{lemma}
\label{lem:decay est}
There exists a solution  $\bs \varphi$  to (\ref{eq: comp 3}) with $\omega=0$ satisfying (\ref{eq: comp 4})  such that we have
\begin{equation}
\label{est:phi weight}
\|\bs\varphi\|_{C^2_\theta(\R)}\leq C\|\bs g\|_{C^0_\theta(\R)},
\end{equation}
provided that  the orthogonality conditions (\ref{cond:orto g}) hold.
\end{lemma}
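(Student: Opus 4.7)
The plan is: first to construct a candidate solution on $\R$ as a limit of the Dirichlet problems on $(-R,R)$, and then to upgrade its decay from the polynomial barrier $\bar{\bs u}$ provided by (\ref{eq: comp 2}) to the exponential $\sech(\theta x)$ required. For existence, I would fix $R>0$ and use the $(\psi)$-property established above to get a unique $\bs\varphi_R\in C^2([-R,R])$ solving $\bs L_0\bs\varphi_R=\bs g\mathbbm 1_{[-R,R]}$ with $\bs\varphi_R(\pm R)=0$ and $|\bs\varphi_R|\leq C\|\bs g\|_{C^0_\theta(\R)}\bar{\bs u}$ uniformly in $R$. Since $\bar{\bs u}$ grows at most linearly, Arzelà–Ascoli combined with standard interior ODE estimates yields a subsequential limit $\bs\varphi\in C^2(\R)$ solving $\bs L_0\bs\varphi=\bs g$ with the same polynomial pointwise bound.

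The heart of the argument is an asymptotic analysis at $\pm\infty$ together with the use of the orthogonality hypotheses to eliminate the two linearly–growing modes. Near $x=+\infty$ the off-diagonal coefficient $V_1V_2$ and the potential $V_2^2$ are super-exponentially small by (\ref{v}), while $V_1^2\sim A^2x^2$, so the second component of $\bs L_0\bs\varphi=\bs g$ reduces to a perturbed quantum harmonic oscillator: its polynomially bounded solutions decay like $e^{-Ax^2/2}$, hence $\varphi_2(x)=\mathcal O(e^{-cx^2})$ as $x\to+\infty$. The first component then behaves like $-\varphi_1''=g_1+$ (exp.~small), so an elementary integration shows $\varphi_1(x)=\alpha^++\beta^+ x+r^+(x)$ with $|r^+(x)|\lesssim e^{-\theta x}$. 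Since $X_1(x)=V_1'(x)\to A$ and $Y_1(x)=xV_1'(x)+V_1(x)\sim 2Ax+B$ by (\ref{z1})--(\ref{z2}), the constants $(\alpha^+,\beta^+)$ are precisely the coefficients of $\bs X,\bs Y$ in the asymptotic expansion of $\bs\varphi$ at $+\infty$. The symmetry $V_1(x)=V_2(-x)$ yields a mirror expansion at $-\infty$ with coefficients $(\alpha^-,\beta^-)$ now carried by the $\varphi_2$ component.

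To extract $\alpha^\pm,\beta^\pm$ I would pair $\bs L_0\bs\varphi=\bs g$ successively with $\bs X$ and $\bs Y$ on $(-R,R)$, integrate by parts twice, and use $\bs L_0\bs X=\bs L_0\bs Y=0$ together with the symmetry of $\bs L_0$ to kill the bulk terms. Only the Wronskian boundary contributions $[\bs\varphi'\cdot\bs Z-\bs\varphi\cdot\bs Z']_{-R}^R$ survive; inserting the asymptotic expansions from the previous paragraph and sending $R\to\infty$ produces two nondegenerate linear relations — modelled on the scalar identities $\int_\R g\,dx=\beta^+-\beta^-$ and $\int_\R xg\,dx=\alpha^+-\alpha^-$ for $-u''=g$ — expressing $(\alpha^+-\alpha^-,\beta^+-\beta^-)$ in terms of $(\int\bs g\cdot\bs X,\int\bs g\cdot\bs Y)$. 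The hypotheses (\ref{cond:orto g}) therefore force $\alpha^+=\alpha^-$ and $\beta^+=\beta^-$, so replacing $\bs\varphi$ by $\bs\varphi-\alpha^+\bs X-\beta^+\bs Y$ yields a solution whose both tails decay. The quantitative bound $|\bs\varphi|\lesssim \|\bs g\|_{C^0_\theta}\sech(\theta x)$ then comes from exhibiting $C\|\bs g\|_{C^0_\theta}\sech(\theta x)\bs 1$ as a super-solution of $\bs L_0$ outside a large compact set — there $V_1^2+V_2^2$ grows quadratically and easily dominates $\theta^2$ — and absorbing the residual constant on the compact using the polynomial bound from the first paragraph, then invoking the $(\psi)$-comparison principle. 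The $C^2_\theta$ control is finally obtained by writing $\bs\varphi''=\bs M\bs\varphi-\bs g$ with $\bs M$ polynomially growing and noting that $\sech(\theta x)$ absorbs polynomial factors after an arbitrarily small loss in $\theta$.

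The principal obstacle is making this Wronskian computation rigorous: showing that $\bs\varphi$ genuinely admits the claimed asymptotic form $\alpha^\pm\bs X+\beta^\pm\bs Y+$ (exp.-decaying remainder) with sufficient uniformity to pass to the limit in the boundary terms, and checking that the resulting $2\times 2$ linear map $(\alpha^+-\alpha^-,\beta^+-\beta^-)\mapsto(\int\bs g\cdot\bs X,\int\bs g\cdot\bs Y)$ is invertible. Both points follow from a careful analysis of the four-dimensional kernel of $\bs L_0$ which, by the asymptotic decoupling described above, consists of $\bs X,\bs Y$ and two further modes with super-exponential growth/decay at $\pm\infty$; once the sub-dominant modes are projected away, the coefficients of $\bs X$ and $\bs Y$ are determined by the Wronskian pairing and the orthogonality conditions suffice to make them jump-free across $x=\pm\infty$.
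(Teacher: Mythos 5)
Your route is genuinely different from the paper's. The paper never analyzes the asymptotics of a growing solution: it proves an \emph{a priori} estimate for the Dirichlet problem (\ref{R problem}) by a compactness/contradiction argument on intervals $(-R_n,R_n)$ — using the finite-interval orthogonality conditions to show the boundary derivatives $\varphi_{n,1}'(R_n),\varphi_{n,2}'(-R_n)$ are $\mathcal O(R_ne^{-cR_n^2})$, then integrating the first (resp. second) equation twice to beat the weight in the far field — and afterwards solves on $\R$ by approximating $\bs g$ with data $\bs g\mathbbm 1_{[-R,R]}+\lambda_{R,1}e^{-x^2}\bs 1+\lambda_{R,2}xe^{-x^2}\bs 1$ orthogonalized on each finite interval, with $\lambda_{R,i}=\mathcal O(Re^{-\theta R})$. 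You instead solve first without orthogonality, obtain a solution bounded by the linear barrier from (\ref{eq: comp 2})--(\ref{eq: comp 4}), identify its asymptotic coefficients along $\bs X,\bs Y$ at $\pm\infty$, and use Green/Wronskian identities plus (\ref{cond:orto g}) to show the two pairs of coefficients coincide, so that subtracting $\alpha\bs X+\beta\bs Y$ yields decay. This is a viable and more constructive argument (it makes transparent that the orthogonality conditions remove exactly the $\bs X$- and $\bs Y$-components of growth, and your $2\times2$ pairing matrix is indeed invertible — it is essentially diagonal with entries $\pm 2A^2$); the paper's scheme is less explicit but delivers the uniform constant in (\ref{est:phi weight}) directly, which is what Corollary \ref{cor:decay est omega} needs. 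If you follow your route you must additionally track that $|\alpha^\pm|,|\beta^\pm|$ and the tail remainders $r^\pm$ are all $\lesssim\|\bs g\|_{C^0_\theta(\R)}$; this does follow from (\ref{eq: comp 4}) and the derivative bounds, but it is part of the statement, not a cosmetic afterthought. (A small inaccuracy: $\varphi_2$ decays at $+\infty$ only at the rate of the right-hand side, like $e^{-\theta x}$, not like $e^{-Ax^2/2}$; this is harmless.)

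One step as written would fail: the final barrier. The first row of $\bs L_0$ carries the potentials $V_2^2$ and $2V_1V_2$, which are $\mathcal O(e^{-cx^2})$ as $x\to+\infty$, so $\bigl(\bs L_0(\sech(\theta x)\bs 1)\bigr)_1\approx-\theta^2\sech(\theta x)<0$ there; the quadratic growth you invoke ($V_1^2$) sits in the \emph{second} row and only helps $\varphi_2$ at $+\infty$ (symmetrically at $-\infty$). Hence $C\|\bs g\|_{C^0_\theta}\sech(\theta x)\bs 1$ is not a supersolution in the component and region where you need it, and the comparison step collapses. The repair is already contained in your own expansion: after subtracting $\alpha\bs X+\beta\bs Y$, the first equation reads $-\varphi_1''=g_1+\mathcal O(e^{-cx^2})$ for $x$ large with $\varphi_1,\varphi_1'\to0$, so integrating twice from $x$ to $+\infty$ gives $|\varphi_1(x)|\lesssim\|\bs g\|_{C^0_\theta}e^{-\theta x}$ — exactly the computation the paper performs inside its contradiction argument. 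Finally, for the $C^2_\theta$ bound note that $\varphi_2''$ contains $V_1^2\varphi_2\sim x^2\varphi_2$ at $+\infty$; rather than conceding a loss in $\theta$ (the lemma asserts the same $\theta$ on both sides), use a refined scalar barrier such as $x^{-2}e^{-\theta x}$ for $-\frac{d^2}{dx^2}+V_1^2$ to get $|\varphi_2|\lesssim\|\bs g\|_{C^0_\theta}\,x^{-2}e^{-\theta x}$ there, which absorbs the quadratic factor.
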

\begin{proof}
We claim that there exists $\bar R>0$  such for all $R>\bar R$   if $\varphi_R$ is a solution of (\ref{R problem}) then 
\begin{equation}
\label{est:phi R weight}
\|\bs\varphi_{R}\|_{C^0_\theta((-R,R))}\leq C\|\bs g_R\|_{C^0_\theta((-R,R))}
\end{equation}
provided that it holds 
\begin{equation}
\label{cond:orto g R}
\int_{-R}^R  \bs g_R\cdot \bs X\,dx=0, \qquad \int_{-R}^R  \bs g_R\cdot \bs Y\,dx=0.
\end{equation}
To prove the claim we argue by contradiction. We suppose that there exists a sequence  $R_n\to \infty$ and sequences of functions $\bs \varphi_n$ and $\bs g_n$ that satisfy
\begin{equation}
\label{hip:contr n}
\|\bs g_n\|_{C^0_\theta((-R_n,R_n))}\to 0, \qquad \|\bs\varphi_{n}\|_{C^0_\theta((-R_n,R_n))}=1.
\end{equation}
By (\ref{eq: comp 2}) we have
\begin{equation}
\label{est:phi n}
|\bs \varphi_n(x)|\leq \min\{\|\bs g_n\|_{C^0_\theta((-R_n,R_n))}\bar {\bs u}(x), e^{\,-\theta|x|}\}, \qquad x\in [-R,R].
\end{equation}
By (\ref{cond:orto g R}) we have
\[
\begin{aligned}
\langle \bs L_0\varphi_n, \bs X\rangle&=-\left(\bs\varphi'_n\cdot \bs X\right)(x)\Big|_{-R_n}^{R_n}=0,\\
\langle \bs L_0\varphi_n, \bs Y\rangle&=-\left(\bs\varphi'_n\cdot \bs Y\right)(x)\Big|_{-R_n}^{R_n}=0,
\end{aligned}
\]
hence using the asymptotic behavior of $\bs V$ and $\bs V'$ we get
\[
\begin{aligned}
-A \varphi'_{n,1}(R_n) -A\varphi'_{n,2}(-R_n)&=\(|\bs\varphi'_n(R_n)|+|\bs\varphi'_n(-R_n)|\)\mathcal O\(e^{\,-c R_n^2}\),\\
-(2A R_n+B)\varphi'_{n,1}(R_n)+(-2A R_n+B)\varphi'_{n,2}(-R_n)&=\(|\bs\varphi'_n(R_n)|+|\bs\varphi'_n(-R_n)|\)\mathcal O\(R_ne^{\,-c R_n^2}\).
\end{aligned}
\]
From (\ref{eq: comp 4}) using the equation  we further conclude
\begin{equation}
\label{eq: comp 6}
|\bs \varphi''_n(x)|\leq C\|\bs g_n\|_{C^0_\theta((-R_n,R_n))}\sech(\theta x), \qquad |\bs \varphi_n'(x)|\leq C\|\bs g_n\|_{C^0_\theta((-R_n,R_n))}.
\end{equation}
 It follows 
 \begin{equation}
\label{eq:est der phi}
|\varphi'_{n,1}(R_n)|+ |\varphi'_{n,2}(-R_n)|\leq \|\bs g_n\|_{C^0_\theta((-R_n,R_n))} \mathcal O\(R_ne^{\,-c R_n^2}\).
\end{equation}
Integrating twice on $(x, R_n)$, $x\geq 0$ the equation
\[
-\varphi_{n,1}''(x)=-V_2^2\varphi_{n,1}-2 V_1V_2 \varphi_{n,2}+g_{n,1} 
\]
we get
\[
\varphi_{n,1}(x)=\varphi'_{n,1}(R_n)(R_n-x)+\mathcal O\(\|\bs g_n\|_{C^0_\theta((-R_n,R_n))}\) e^{\,-\theta x}+\mathcal O\(e^{\,-c x^2}\).
\]
It follows that for some large, but independent on $n$ number $\wt x>0$ 
\[
|\varphi_{n,1}(x)|\leq \frac{e^{\,-\theta x}}{4} , \qquad x\in[\wt x, R_n],
\]
for all sufficiently large $n$. Similarly we show
\[
|\varphi_{n,2}(x)|\leq \frac{e^{\,-\theta x}}{4} , \qquad x\in[-R_n,-\wt x].
\]
These last two estimates and (\ref{est:phi n}) show that for $n$ large
\[
|\bs \varphi_n(x)|\leq \frac{1}{2}\sech(\theta x),\qquad x\in [-R_n, R_n],
\]
which contradicts (\ref{hip:contr n}). 

To finish the proof of the Lemma we use the approximation of the problem on $\R$ by the Dirichlet problem (\ref{R problem}) replacing $\bs g$ by 
\[
\bs g_R=\bs g \mathbbm{1}_{[-R, R]}(x)+\lambda_{R,1} e^{\,-x^2}\bs 1+\lambda_{R,2} xe^{\,-x^2}\bs 1,
\]
with $\lambda_{R,i}$, $i=1,2$ chosen so that $\bs g_R$ satisfies the (\ref{cond:orto g R}). Note that 
\[
\lambda_{R, i}=\mathcal O\(R e^{\,-\theta R}\).
\]
Passing to the limit to obtain $\varphi$ satisfying (\ref{lem:decay est}) is standard. 
\end{proof}
Now we consider the problem (\ref{eq: comp 3}) with $\omega>0$. Our goal is to find a solution that decays at an exponential rate independent on $\omega$ and for this  the right hand side of the equation can not be an arbitrary, exponentially decaying function. Naturally we would like to impose the orthogonality conditions (\ref{cond:orto g}) but there is an additional complication coming form the fact that  the functions $\bs X$ and $\bs Y$ are not in the fundamental set of $\bs L_\omega$. The following {\it a priori} estimate settles it.
\begin{corollary}
\label{cor:decay est omega}
There exists $\bar\omega>0$ such that if  $\varphi\in C_\theta^{2,\alpha}(\R)$ is a solution of (\ref{eq: comp 3}) with  $\omega\in[0,\bar\omega]$ satisfying
\begin{equation}
\label{cond:ort omega}
\int_\R  \bs g\cdot \bs X\,dx=\omega\int_\R \bs \varphi \cdot \bs X\,dx, \qquad \int_\R  \bs g\cdot \bs Y\,dx=\omega\int_\R\bs \varphi \cdot \bs Y\,dx,
\end{equation}
then
\begin{equation}
\label{est:phi omega}
\|\bs\varphi\|_{C_\theta^{2,\alpha}(\R)}\leq C\|\bs g\|_{C_\theta^{\alpha}(\R)}.
\end{equation}
\end{corollary}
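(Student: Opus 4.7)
The plan is to deduce the corollary from Lemma \ref{lem:decay est} by a perturbation argument in $\omega$. Rewrite the equation as
\[
\bs L_0\bs\varphi \;=\; \bs g - \omega^2\bs\varphi \;=:\; \tilde{\bs g},
\]
so that it becomes of the $\omega=0$ type. The hypothesis (\ref{cond:ort omega}) is precisely the compatibility condition needed to apply Lemma \ref{lem:decay est}: pairing $\bs L_\omega\bs\varphi=\bs g$ with $\bs X$ and $\bs Y$ and integrating by parts on $\R$---boundary terms at $\pm\infty$ vanishing because $\bs\varphi\in C^2_\theta$ decays exponentially while $\bs X$ is bounded and $\bs Y$ grows only linearly, see (\ref{v})--(\ref{z2})---yields
\[
\int_\R\bs g\cdot\bs X\,dx-\omega^2\int_\R\bs\varphi\cdot\bs X\,dx=0,\qquad \int_\R\bs g\cdot\bs Y\,dx-\omega^2\int_\R\bs\varphi\cdot\bs Y\,dx=0,
\]
so that $\int\tilde{\bs g}\cdot\bs X=\int\tilde{\bs g}\cdot\bs Y=0$ as required (this is (\ref{cond:ort omega}), reflecting the quadratic perturbation $\bs L_\omega-\bs L_0=\omega^2\Id$). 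Lemma \ref{lem:decay est} then gives
\[
\|\bs\varphi\|_{C^2_\theta(\R)}\;\leq\; C\|\tilde{\bs g}\|_{C^0_\theta(\R)}\;\leq\; C\|\bs g\|_{C^0_\theta(\R)}+C\omega^2\|\bs\varphi\|_{C^0_\theta(\R)}.
\]
Taking $\bar\omega>0$ with $C\bar\omega^2<1/2$, the $\omega^2$-term is absorbed into the left-hand side to produce $\|\bs\varphi\|_{C^2_\theta}\leq 2C\|\bs g\|_{C^0_\theta}$.

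Having the $C^2_\theta$ bound, I would upgrade to $C^{2,\alpha}_\theta$ against $\|\bs g\|_{C^\alpha_\theta}$ by standard Schauder interior estimates applied on unit balls to $\bs L_\omega\bs\varphi=\bs g$, exploiting that the coefficients of $\bs L_\omega$ are smooth and bounded uniformly in $\omega\in[0,\bar\omega]$ and multiplying through by $\cosh(\theta x)$ to incorporate the weight.

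The main obstacle is the implicit nature of the rearrangement: the ``data'' $\tilde{\bs g}=\bs g-\omega^2\bs\varphi$ depends on the unknown $\bs\varphi$. I would make the argument rigorous either by (i) a Picard iteration $\bs\varphi^{(n+1)} = T_0(\bs g-\omega^2\bs\varphi^{(n)})$, where $T_0$ is the solution operator produced by Lemma \ref{lem:decay est}, which contracts on $C^2_\theta$ for $\bar\omega$ small because the orthogonality conditions are preserved at every step by the computation above; or (ii) a compactness/contradiction argument in the spirit of the proof of Lemma \ref{lem:decay est}: assume a normalized counterexample sequence with $\omega_n\to\omega_*\in[0,\bar\omega]$, extract a local $C^{2,\alpha}$ limit $\bs\varphi_\infty$ solving $\bs L_{\omega_*}\bs\varphi_\infty=0$ with $|\bs\varphi_\infty(x)|\leq\sech(\theta x)$, rule out $\bs\varphi_\infty\not\equiv 0$ via Lemma \ref{lin properties}(1) when $\omega_*=0$ (the only bounded homogeneous solution is $c\bs X$, which does not decay at $+\infty$ since $X_1\to A>0$) and via Lemma \ref{lin properties}(2) paired with $\bs\varphi_\infty\in H^1$ when $\omega_*>0$ (giving $\omega_*^2\|\bs\varphi_\infty\|_{L^2}^2\leq 0$), and close with the same tail analysis at $\pm\infty$ used at the end of the proof of Lemma \ref{lem:decay est}, now driven by the asymptotic ODEs $-y''+\omega_n^2 y\approx g$, to forbid concentration at infinity.
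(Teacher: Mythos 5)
Your proposal is correct and takes essentially the same route as the paper: the paper's proof is exactly the absorption argument, rewriting the equation as $\bs L_0\bs\varphi=\bs g-\omega^2\bs\varphi$, invoking Lemma \ref{lem:decay est}, and choosing $\bar\omega<1/\sqrt{2C}$, which is your condition $C\bar\omega^2<1/2$. The additional details you give (verifying the orthogonality of the modified right-hand side by integration by parts, the Schauder upgrade to $C^{2,\alpha}_\theta$, and the fixed-point/compactness alternatives) are consistent elaborations of that same argument.
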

\begin{proof}
The proof is an easy consequence of Lemma \ref{lem:decay est} if we take $\bar\omega<\frac{1}{\sqrt{2C}}$, where $C$ is the constant on the right hand side of (\ref{est:phi weight}).
\end{proof}

We are in position to solve the equation (\ref{eq: inner 1}).
We introduce the weighted H\"older spaces $C^\alpha_\theta(\pC)$ and $C^{k,\alpha}_{\theta}(\pC)$ equipped with the norms
\[
\begin{aligned}
\|\bs h\|_{C^\alpha_\theta(\pC)}&= \|\bs h\cosh(\theta x)\|_{C^0(\pC)}+\sup_{x\in \R}\cosh(\theta x)\left[\bs h\right]_{\alpha, (x, x+1)\times [0,|\Gamma|]},\\
\|\bs h\|_{C^{k,\alpha}_{\theta}(\pC)}&=\|\bs h\cosh(\theta x)\|_{C^0(\pC)}+\sum_{j=1}^k\sup_{x\in \R}\cosh(\theta x) \bs\| D^j h\|_{\alpha, (x, x+1)\times [0,|\Gamma|]}\\
&\qquad +\sup_{x\in \R}\cosh(\theta x)\left[\bs D^k h\right]_{\alpha, (x, x+1)\times [0,|\Gamma|]},
\end{aligned}
\]
and  the weighted Sobolev spaces $L^2_\theta(\pC)$ and $H_\theta^k(\pC)$ equipped, respectively, with the norms
\[
\begin{aligned}
\|\bs h\|^2_{L^2_\theta(\pC)}&=\int_{\pC}\cosh^2(\theta x) |\bs h(x,y)|^2 b^2_0(y)\,dxdy,\\
\|\bs h\|^2_{H^k_{\theta}(\pC)}&=\|\bs h\|^2_{L^2_\theta(\pC)}+\int_{\pC}\cosh^2({\theta}x)\sum_{j=1}^k|D^j\bs h(x,y)|^2 b^2_0(y)\,dxdy.
\end{aligned}
\]
For any $\bar\omega\leq \frac{1}{\sqrt{2C}}$ (c.f. Corollary \ref{cor:decay est omega}) and  for a given $\eps$ let $\bar K_\eps$ be the largest positive integer   such that 
\begin{equation}
\label{def keps}
\eps\omega_{k}<\bar \omega, \quad\mbox{for}\quad  {\color{black}{|k|}}\leq \bar K_\eps.
\end{equation}
Note that $\bar K_\eps=\mathcal O(\eps^{-1})$ by the Weyl theorem.  We will suppose that $\bs g\in {L^2_\theta}(\pC)$. For such $\bs g$ we consider the expansion (\ref{exp:g}) and write
\[
\bs g={\bs g}^\parallel+\bs g^\perp,
\]
where
\[
{\bs g}^\parallel=\sum_{{\color{black}{k=-\bar K_\eps}}}^{\bar K_\eps} \bs g_k\psi_{k}, \qquad \bs g^\perp=\bs g-{\bs g}^\parallel.
\]
When convenient we will write $P_{K}\bs g$ to be the projection of $\bs g$ on the first $2K$ Fourier modes of $\bs g$ in the expansion (\ref{exp:g}) so that $P_{\bar K_\eps}\bs g={\bs g}^\parallel$ and $\bs g^\perp=(\operatorname{Id}-P_{\bar K_\eps})\bs g$.  With these notations we decompose the equation (\ref{eq: inner 1}) into the following problems
\begin{equation}
\label{eq:modes}
\bs L_{\eps\omega_{k}}\bs \varphi_k=\bs g_k,\qquad\mbox{in}\ \R \qquad {\color{black}{|k|}}=0, \dots, \bar K_\eps
\end{equation}
and 
\begin{equation}
\label{eq:complement}
\bs{\mathfrak L}\bs \varphi^\perp=\bs g^\perp, \qquad \mbox{in}\ \pC.
\end{equation}
\begin{proposition}\label{prop: inner lineal}
Suppose that $\theta<1$ and  $\bs g\in L^2_\theta(\pC)\cap C^{\alpha}_\theta(\pC)$ is decomposed $\bs g={\bs g}^\pr+\bs g^\perp$ as above. 
There exists $\bar \omega>0$, small but $\eps$ independent,  such that with $\bar K_\eps$ defined in (\ref{def keps}) the  following hold:
\begin{itemize}
\item[(i)]
For any $k\in \{{\color{black}{-\bar K_\eps}}, \dots, \bar K_\eps\}$ {\color{black}{ there  exist constants $\lambda_{\bs X, k}$, $\lambda_{\bs Y, k}$ and a  solution $\bs \varphi_k$ of  
\begin{equation}
\label{eq:modes_lagrange}
\bs L_{\eps\omega_{k}}\bs \varphi_k=\bs g_k+\lambda_{\bs X, k}\bs X\sech x+\lambda_{\bs Y, k}\bs Y\sech x,\qquad\mbox{in}\ \R, \qquad {\color{black}{|k|}}=0, \dots, \bar K_\eps
\end{equation}
}}
such that for any {\color{black}{$\alpha\in (0,1)$, $k\neq 0$,}}
\begin{equation}
\label{est:mode holder}
 \|\bs \varphi_k\|_{{C^{2,\alpha}_{\theta}}(\R)}\leq C |k|^{{\color{black}{-\alpha}}}\|{\bs g}_k\|_{C^\alpha_\theta(\R)}.
\end{equation}
Moreover, for 
\[
\bs \varphi^\pr=\sum_{{\color{black}{k=-\bar K_\eps}}}^{\bar K_\eps} \bs \varphi_k\psi_{k}
\]
we have for any $\alpha\in (0,1)$
\begin{equation}
\label{est:glob hold}
\|\bs\varphi^\pr\|_{C^{2,\alpha}_\theta(\pC)}\lesssim \eps^{-1+\alpha}\|\bs g^\pr\|_{C^\alpha_\theta(\pC)}.
\end{equation}
When $g^\pr\in C^{1, \alpha}_\theta(\pC)$ we have
\begin{equation}
\label{est:glob hold_2}
\|\bs \varphi^\pr\|_{C^{2,\alpha}_\theta(\pC)}\lesssim |\ln\eps|\|\bs g^\pr\|_{C^{1,\alpha}_\theta(\pC)}
\end{equation}

\item[(ii)]
Assuming that $\theta<\frac{1}{4}\bar\omega$ there exists a unique solution $\bs \varphi^\perp$   of (\ref{eq:complement}) such that
\begin{equation}
\label{est:c2alpha}
\|\bs \varphi^\perp\|_{C^{2,\alpha}_\theta(\pC)}\leq C\|\bs g^\perp\|_{C^\alpha_\theta(\pC)}.
\end{equation} 
\end{itemize}
\end{proposition}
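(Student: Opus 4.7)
The plan is to decompose equation (\ref{eq: inner 1}) through the Fourier basis $\{\psi_k\}_{k\in\mathbb Z}$ associated with the periodic Sturm-Liouville problem (\ref{eq: per 1}) and to treat the low-frequency and high-frequency blocks by two different mechanisms: the block $|k|\leq \bar K_\eps$ decouples into the family of one-dimensional problems (\ref{eq:modes}) and is addressed by reducing to Corollary \ref{cor:decay est omega}, whereas the complement $|k|>\bar K_\eps$ is a single coercive equation on $\pC$ and is addressed by Lax-Milgram after suitable weighting.

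For part (i), I fix $|k|\leq \bar K_\eps$, so that $\eps\omega_k<\bar\omega$. Since the orthogonality (\ref{cond:ort omega}) need not hold for $\bs g_k$ alone, I introduce the Lagrange multipliers $\lambda_{\bs X,k},\lambda_{\bs Y,k}$ and seek $\bs\varphi_k$ solving (\ref{eq:modes_lagrange}) together with (\ref{cond:ort omega}). Substituting (\ref{eq:modes_lagrange}) into the orthogonality relations yields a $2\times 2$ linear system for $(\lambda_{\bs X,k},\lambda_{\bs Y,k})$ which, thanks to the linear independence of $\bs X$ and $\bs Y$ paired with $\sech x\,\bs X$ and $\sech x\,\bs Y$, is an $\mathcal O(\eps^2\omega_k^2)$-perturbation of a fixed nonsingular Gram matrix and hence uniquely solvable. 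Corollary \ref{cor:decay est omega} applied to the modified right-hand side then yields a uniform $C^{2,\alpha}_\theta$-bound. To upgrade this to the advertised $|k|^{-\alpha}$ gain when $k\neq 0$, I combine it with the coercivity estimate $\|\bs\varphi_k\|_{L^\infty}\lesssim (\eps\omega_k)^{-2}\|\bs g_k\|_{L^\infty}$, obtained by dividing the equation by $\omega^2$ and absorbing the zero-order terms, and interpolate in standard Hölder fashion between $L^\infty$ and $C^{2,\alpha}$.

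The global bounds (\ref{est:glob hold})--(\ref{est:glob hold_2}) are then produced by resumming $\bs\varphi^\pr=\sum_{|k|\leq \bar K_\eps}\bs\varphi_k\psi_k$: two $y$-derivatives bring down a factor $\omega_k^2\sim k^2$ through $\psi_k''=-\omega_k^2 b^2\psi_k$, which, combined with the per-mode gain $|k|^{-\alpha}$ and summation over $\bar K_\eps\sim\eps^{-1}$ modes, yields $\eps^{-1+\alpha}$; an additional factor $|k|^{-1}$ per coefficient $\bs g_k$ under the stronger hypothesis $\bs g^\pr\in C^{1,\alpha}_\theta$ (gained via integration by parts against $\psi_k$) converts the sum into a harmonic one, producing the $|\ln\eps|$ of (\ref{est:glob hold_2}). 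For part (ii), the operator restricted to modes $|k|>\bar K_\eps$ has an $\eps^2\omega_k^2>\bar\omega^2$ contribution from the $y$-Laplacian that dominates the bounded zero-order coefficients $V_i^2$ and the cross-term $2V_1V_2$ (the latter thanks to the stability of Lemma \ref{lin properties}(2)). Existence and uniqueness of $\bs\varphi^\perp\in H^1_\theta(\pC)$ then follow from Lax-Milgram in the weighted space, the restriction $\theta<\frac14\bar\omega$ precisely absorbing the first-order terms produced by differentiating the weight $\cosh(\theta x)$; Schauder regularity on unit cubes upgrades the weak solution to $C^{2,\alpha}_\theta(\pC)$, while the pointwise decay is propagated by comparison against the barrier $\bar{\bs u}$ of (\ref{def bar u}).

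The main obstacle is the small-divisor regime of part (i): for $k$ near the threshold $\bar K_\eps$ the uniform-in-$\omega$ bound of Corollary \ref{cor:decay est omega} provides no gain, so the $|k|^{-\alpha}$ decay must be extracted from a delicate interpolation between the $\omega^{-2}$ $L^\infty$-coercivity and the $\omega$-uniform $C^{2,\alpha}$-estimate, with the precise interplay of the $\omega^2\Id$-term and the linear growth of $\bs V$ tracked carefully. A secondary technical difficulty is the bookkeeping of $\omega_k$-powers absorbed by the $y$-derivatives of $\psi_k$ in the cylinder norm and its compatibility with the sharp exponents $\eps^{-1+\alpha}$ and $|\ln\eps|$ in the two global estimates; the coercivity input underlying part (ii) is itself deferred to Section \ref{sec: 5} as an auxiliary result.
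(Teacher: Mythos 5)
The central gap is in part (i): the mechanism you propose for the factor $|k|^{-\alpha}$ cannot work. In the relevant regime $|k|\le \bar K_\eps$ one has $\eps\omega_k<\bar\omega$, so the quantity $(\eps\omega_k)^{-2}$ in your claimed bound $\|\bs\varphi_k\|_{L^\infty}\lesssim(\eps\omega_k)^{-2}\|\bs g_k\|_{L^\infty}$ is \emph{large} (for fixed $k$ it blows up like $\eps^{-2}k^{-2}$), and the bound itself is not obtainable by ``dividing by $\omega^2$ and absorbing the zero-order terms'': the potentials $V_2^2$, $2V_1V_2$, $V_1^2$ are of order one (and $V_1^2$ grows quadratically), hence are not dominated by $\omega^2\le\bar\omega^2$. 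Consequently no interpolation between such a bound and the $\omega$-uniform estimate of Corollary \ref{cor:decay est omega} can produce a factor $|k|^{-\alpha}$ uniform in $\eps$. In the paper the per-mode solvability estimate is uniform (no gain in $k$), and the factor $|k|^{-\alpha}$ comes from the \emph{datum}: by Proposition \ref{lem apend 1} (the auxiliary result of section \ref{sec: 5}, which you instead attribute to part (ii)) the Fourier coefficients of the H\"older-continuous function $\bs g^\parallel$ satisfy $\|\bs g_k\|_{C^\alpha_\theta(\R)}\lesssim |k|^{-\alpha}\|\bs g^\parallel\|_{C^\alpha_\theta(\pC)}$, and this is how (\ref{est:mode holder}) is meant and used. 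Note also that Corollary \ref{cor:decay est omega} is only an a priori estimate; existence of $\bs\varphi_k$ with the multipliers is obtained in the paper by choosing $\lambda_{\bs X,k},\lambda_{\bs Y,k}$ so that the right-hand side of $\bs L_0\bs\varphi_k=-\eps^2\omega_k^2\wt{\bs\varphi}_k+\bs g_k+\cdots$ satisfies (\ref{cond:orto g}) and then running a fixed point argument, whereas your ``$2\times2$ Gram system'' overlooks that (\ref{cond:ort omega}) couples the multipliers to the unknown solution itself.

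The resummation also does not deliver the stated exponents. If, as you write, each pair of $y$-derivatives costs $\omega_k^2\sim k^2$, then together with the per-mode factor $|k|^{-\alpha}$ the sum over $|k|\le\bar K_\eps\sim\eps^{-1}$ is $\sum k^{2-\alpha}\sim\eps^{-3+\alpha}$, not $\eps^{-1+\alpha}$, and with the extra $|k|^{-1}$ it is $\sim\eps^{-2+\alpha}$, not $|\ln\eps|$; your own bookkeeping contradicts the claimed harmonic sum. The paper sums only at the $C^0_\theta$ level ($\sum|k|^{-\alpha}\lesssim\eps^{\alpha-1}$, respectively $\sum|k|^{-1}\lesssim|\ln\eps|$ when $\bs g^\parallel\in C^{1,\alpha}_\theta$) and then upgrades by elliptic regularity for $\bs{\mathfrak L}$, in which the $y$-derivatives carry the factor $\eps^2 b^{-2}$, so the Schauder step does not reintroduce the $k^2$. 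Finally, in part (ii) the assertion that $\eps^2\omega_k^2>\bar\omega^2$ ``dominates the bounded zero-order coefficients $V_i^2$'' is false ($\bar\omega$ is small and $V_1^2$ grows); what the paper actually uses is the stability $\langle\bs L_0\bs f,\bs f\rangle\ge0$ of Lemma \ref{lin properties} for the whole potential part, plus $\omega^2\ge\bar\omega^2$ and $\theta<\frac14\bar\omega$ to absorb the first-order terms created by the weight $\cosh(\theta x)$; your energy/Lax--Milgram scheme is workable, but only after this correction.
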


\begin{proof}
As long as (\ref{cond:ort omega}) is satisfied for each individual mode part (i) follows directly from Corollary \ref{cor:decay est omega} since, {\color{black}{by Proposition \ref{lem apend 1}
\[
|g_k(x)|=\left|\int_0^{|\Gamma|} \bs g(x,y)\psi_{k}(y)b^2_0(y)\,dy\right|\lesssim |k|^{-\alpha}\|\bs g^{\parallel}(x,\cdot)\|_{C^\alpha(0,|\Gamma|)},
\]
hence
\[
\|\bs g_k\|_{{C^{\alpha}_\theta}(\R)}=\left\|\int_0^{|\Gamma|} \bs g(x,y)\psi_{k}(y)b^2_0(y)\,dy\right\|_{{C^{\alpha}_\theta}(\R)}\lesssim |k|^{-\alpha} \|{\bs g}^\pr\|_{C^\alpha_\theta(\pC)}, \qquad {\color{black}{-\bar K_\eps}}\leq k\leq \bar K_\eps, \quad k\neq 0.
\]
}}
However (\ref{cond:ort omega}) does not hold in general and this is why we need to introduce the Lagrange multipliers in (\ref{eq:modes_lagrange}). We fix $k$ and and look for a solution of (\ref{eq:modes_lagrange}) in the form of a fixed point problem as follows
\begin{equation}
\label{eq:modes_lagrange 2}
\bs L_0\bs \varphi_k=-\eps^2\omega_{k}^2\wt {\bs \varphi}_k+\bs g_k+\lambda_{\bs X, k}\bs X\sech x+\lambda_{\bs Y, k}\bs Y\sech x,
\end{equation}
with $\wt{\bs \varphi}_k\in C^{\alpha}_\theta(\R)$ given. We chose 
\[
\begin{aligned}
\lambda_{\bs X, k}&=\left(\int_\R \left(\eps^2\omega_{k}^2\wt{\bs \varphi}_k -\bs g_k\right)\cdot \bs X\right)\left(\int_R|\bs X|^2\sech x\right)^{-1}, \\
\lambda_{\bs Y, k}&=\left(\int_\R \left(\eps^{2}\omega_{k}\wt{\bs \varphi}_k -\bs g_k\right)\cdot \bs Y\right)\left(\int_R|\bs Y|^2\sech x\right)^{-1}.
\end{aligned}
\]
We have 
\[
|\lambda_{\bs X, k}|+|\lambda_{\bs Y, k}|\leq C\left(\eps^2\omega_{k}^2 \|\wt{\bs \varphi}_k\|_{C^\alpha_\theta(\R)}+\|\bs g_k\|_{C^\alpha_\theta(\R)}\right).
\]
By Lemma \ref{lem:decay est} we know that there exists a solution to (\ref{eq:modes_lagrange 2}) such that 
\[
\|\bs\varphi_k\|_{C^\alpha_\theta(\R)}\leq C\left(\eps\omega_{k} \|\wt{\bs \varphi}_k\|_{C^\alpha_\theta(\R)}+\|\bs g_k\|_{C^\alpha_\theta(\R)}\right).
\]
From this and a straightforward implementation of the fixed point argument the first estimate of part (i) follows. {\color{black}{To finish the proof we note that as long as $0\leq k\leq \bar K_\eps\sim \frac{1}{\eps}$ we have (see Lemma \ref{lemm apend unif})
\[
\|\psi_{k}\|_{C^0 (0,|\Gamma|)} \leq C.
\] 
To show (\ref{est:glob hold}) we first prove that 
\begin{equation}
\label{est:glob c0}
\|\bs \varphi^\pr\|_{C^0_\theta(\pC)}\leq C\eps^{-1+\alpha}\|\bs g^\pr\|_{C^{\alpha}_\theta(\pC)}.
\end{equation}
Indeed we have
\[
\begin{aligned}
\|\bs \varphi^\pr\|_{C^0_\theta(\pC_\eps)}\leq \sum_{{\color{black}{k=-\bar K_\eps}}}^{\bar K_\eps} \|\bs\varphi_k\|_{C^0_\theta(\R)} \|\psi_{k}\|_{C^0(0,|\Gamma|)}\lesssim\|\bs g^\pr\|_{C^\alpha_\theta(\pC)} \sum_{{\color{black}{k=1}}}^{\bar K_\eps} k^{-\alpha}\lesssim \eps^{\alpha-1} \|\bs g^\pr\|_{C^\alpha_\theta(\pC)}.
\end{aligned}
\]
From this we get (\ref{est:glob hold}) by  standard elliptic regularity argument.  The second estimate is proven in a similar way using an easy to show fact that the  modes  of a  $C^1(0, |\Gamma|)$  function decay like $k^{-1}$ for $k$ large. 
}}

To show (ii) we fix a mode $\omega\geq \bar\omega$ and consider  (\ref{eq:complement}) projected on this mode
\begin{equation}
\label{eq:psiomega}
\bs L_\omega \bs \varphi^\perp_\omega=\bs g^\perp_\omega.
\end{equation}
We claim that assuming that $\theta<\frac{1}{4}\bar\omega$ there exists a unique solution $\bs \varphi^\perp$   of (\ref{eq:complement}) such that
\[
\|\bs \varphi^\perp\|_{H^2_\theta(\pC)}\leq C\|\bs g^\perp\|_{L^2_\theta(\pC)}.
\]
It is clear that by a standard bootstrap argument we get (\ref{est:c2alpha}) from this.
 
To show the claim we  first show an {\it a priori} estimate. Write 
\[
\wt{\bs \psi}(x)=\cosh(\theta x)\bs\varphi^\perp_\omega(x), \qquad \wt{\bs h}=\cosh(\theta x)\bs g^\perp_\omega(x).
\]
Then 
\begin{equation}
\label{eq:wtpsi}
\bs L_0 \wt{\bs \psi}-2\theta\tanh(\theta x) \wt{\bs\psi}'+\left[\omega^2-\theta^2+2\theta^2\tanh^2(\theta x)\right]\wt{\bs \psi}=\wt{\bs h}.
\end{equation}
Multiply this equation by $\wt{\bs \psi}$ and integrate by parts:
\[
\langle \bs L_0\wt{\bs\psi}, \wt {\bs\psi}\rangle+\int_\R \left[\omega^2-\theta^2+2\theta^2\tanh^2(\theta x)-\theta^2\sech^2(\theta x)\right]|\wt{\bs \psi}|^2\,dx=\langle \wt{\bs \psi}, \wt{\bs h}\rangle
\]
From Lemma \ref{lin properties} (i) we infer
\[
\|\wt{\bs \psi}\|_{H^1(\R)}\leq C\|\wt{\bs h}\|_{L^2_\theta(\R)},
\]
and using the equation we get
\begin{equation}
\label{est:wtpsi}
\|\wt{\bs \psi}\|_{H^2(\R)}\leq C\|\wt{\bs h}\|_{L^2_\theta(\R)}.
\end{equation}
In particular the second order differential operator 
\[
\wt{\bs L}_{\omega, \theta}=\bs L_0-2\theta\tanh(\theta x) \frac{d}{dx}+\left[\omega^2-\theta^2+2\theta^2\tanh^2(\theta x)\right]
\]
is injective. Considering its adjoint
\[
\wt{\bs L}^*_{\omega, \theta}=\bs L_0+2\theta\tanh(\theta x) \frac{d}{dx}+\left[\omega^2-\theta^2+2\theta^2\tanh^2(\theta x)+2\theta^2\sech^2(\theta x)\right]
\]
in a similar way we show that it is also an injective operator and thus $\wt{\bs L}_{\omega, \theta}$ is surjective. From this we get the existence and uniqueness for the equation (\ref{eq:wtpsi}) and from the estimate (\ref{est:wtpsi}) we deduce the existence of solution of (\ref{eq:psiomega}) and also the estimate
\[
 \|{\bs \varphi^\perp_\omega}\|_{H^2_\theta(\R)}\leq C\|{\bs g}^\perp_\omega\|_{L^2_\theta(\R)}.
\]
The proof of (ii) is completed by summing up the Fourier modes and using the Plancherel identity. 
\end{proof}

{\color{black}{\subsection{An auxiliary Lemma}\label{sec proof Lemma w}

The result proven below  will be needed in section \ref{sec ansatz}.  \blu{We remark that  its proof   can be found in \cite{cs}. For completeness we include it here together with a somewhat different proof.}
\begin{lemma}\label{lema w}
There exists a one parameter family of  solution of the problem
\[
\bs L_0\bs \varphi =\bs V', \qquad\mbox{in}\quad \R,
\]
of the form 
\[
\bs \varphi=\bs W+C \bs V',\qquad \bs W=\frac{1}{2} x^2\bs V'+\mathcal O(e^{\,-c|x|^2}),
\]
where $C$ is an arbitrary  constant. Moreover $W_1(x)=-W_2(-x)$. 
\end{lemma}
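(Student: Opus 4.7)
The strategy is an ansatz-and-correct approach: write $\bs\varphi = \tfrac12 x^2 \bs V' + \bs r$, derive an equation for $\bs r$ with exponentially small right-hand side, and then invert $\bs L_0$ via Lemma~\ref{lem:decay est}. A direct Leibniz-rule computation, using the kernel identity $\bs L_0\bs V'=0$ from \eqref{sis-lin-sol} to cancel the third-derivative terms in $(\tfrac12 x^2 V_i')''$ against the potential contributions, yields
\[
\bs L_0\!\left(\tfrac12 x^2\bs V'\right) \;=\; -\bs V' - 2x\bs V''.
\]
Since $\bs V''_i = V_iV_j^2$ by the ODE \eqref{sis}, and the asymptotics \eqref{v} show that on each half-line one of $V_1,V_2$ is $\mathcal O(e^{-cx^2})$, one has $x\bs V''=\mathcal O(e^{-c|x|^2})$ at $\pm\infty$. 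Thus, up to the sign convention matching $\bs V'$ on the right-hand side, the correction $\bs r$ satisfies $\bs L_0\bs r = \bs h$ with $\bs h\in C_\theta^{\alpha}(\R)$ for any sufficiently small $\theta>0$.

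\textbf{The main step: inverting $\bs L_0$ on $\bs h$.} Lemma~\ref{lem:decay est} delivers an exponentially decaying $\bs r$ provided the two orthogonality conditions \eqref{cond:orto g} hold against $\bs X=\bs V'$ and $\bs Y=x\bs V'+\bs V$. The central tool is the involutive symmetry $\sigma\bs\psi(x):=P\bs\psi(-x)$, with $P$ the coordinate swap $(\psi_1,\psi_2)\mapsto(\psi_2,\psi_1)$, which preserves $\bs L_0$ thanks to the relation $V_1(x)=V_2(-x)$. One verifies directly that $\sigma\bs X=-\bs X$, $\sigma\bs Y=+\bs Y$ and $\sigma\bs h=-\bs h$, so that $\bs h\cdot\bs Y$ is an odd function on $\R$ and the $\bs Y$-orthogonality holds automatically by parity. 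The $\bs X$-orthogonality is the technical core: using the first integral $|\bs V'|^2 = V_1^2V_2^2 + A^2$ of \eqref{sis} (obtained by multiplying each equation by $V_i'$, summing, and integrating once) together with integration by parts---whose boundary terms vanish because $V_1^2V_2^2\to 0$ on both sides---one reduces $\int_\R \bs h\cdot\bs X\,dx$ to an explicit profile integral. Any residual obstruction is absorbed by augmenting the ansatz with a $\sigma$-antisymmetric, exponentially localised refinement, or equivalently by the Lagrange-multiplier device of Proposition~\ref{prop: inner lineal}(i), whose multiplier is forced to zero by the same parity argument.

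\textbf{Uniqueness and the symmetry $W_1(x)=-W_2(-x)$.} Uniqueness up to the summand $C\bs V'$ follows from Lemma~\ref{lin properties}(1): $\bs V'$ is the only bounded element of $\ker\bs L_0$, and any other kernel element---such as $\bs Y$---grows at least linearly and is therefore incompatible with the stated $\mathcal O(e^{-c|x|^2})$ error. Finally, the parity relation $W_1(x)=-W_2(-x)$ is precisely the statement $\sigma\bs W=-\bs W$. Since $\bs L_0$, the source $\bs V'$, and the leading ansatz $\tfrac12 x^2\bs V'$ are all $\sigma$-odd, replacing the constructed $\bs W$ by its $\sigma$-antisymmetric part $\tfrac12(\bs W - \sigma\bs W)$ preserves both the equation $\bs L_0\bs\varphi=\bs V'$ and the asymptotic form $\tfrac12 x^2\bs V'+\mathcal O(e^{-c|x|^2})$, yielding the required symmetric representative.
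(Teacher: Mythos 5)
Your reduction is fine as far as it goes: the identity $\bs L_0\bigl(\tfrac12x^2\bs V'\bigr)=-\bs V'-2x\bs V''$ is correct (so, with the sign convention that makes the quadratic ansatz compatible, the correction must solve $\bs L_0\bs r=2x\bs V''$, a Gaussian right-hand side), and your parity bookkeeping ($\sigma\bs X=-\bs X$, $\sigma\bs Y=\bs Y$, $\sigma(x\bs V'')=-x\bs V''$) does give the $\bs Y$-orthogonality for free. The gap is exactly at what you call the technical core: the $\bs X$-orthogonality is not something you can arrange --- it is false. Since $\bs V''\cdot\bs V'=\tfrac12(V_1^2V_2^2)'$ by the system, one has $\int_\R 2x\,\bs V''\cdot\bs X\,dx=-\int_\R V_1^2V_2^2\,dx$, and the identity $\tfrac{d}{dx}(V_1V_1'+V_2V_2')=A^2+3V_1^2V_2^2$ (first integral plus the equations, evaluated on $(-R,R)$ with the asymptotics \eqref{v}, \eqref{z1}) gives $\int_\R V_1^2V_2^2\,dx=\tfrac{2AB}{3}>0$. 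Neither of your rescues removes this obstruction: any exponentially localised refinement $\bs\phi_0$ of the ansatz changes the right-hand side by $\bs L_0\bs\phi_0$, whose pairing with the \emph{bounded} kernel element $\bs X$ vanishes after integration by parts, so the nonzero pairing persists; and in the Lagrange-multiplier device the term $\lambda_{\bs X}\bs X\sech x$ has the same $\sigma$-parity as $2x\bs V''$, so parity forces $\lambda_{\bs Y}=0$, not $\lambda_{\bs X}=0$ --- the multiplier you would need to vanish is precisely the one proportional to the nonzero integral, and the equation you end up solving is not the one required. Consequently Lemma \ref{lem:decay est} cannot be invoked along these lines, and in fact no correction $\bs r$ decaying (with its derivative) at both ends can exist at all: pairing $\bs L_0\bs r=2x\bs V''$ with $\bs X$ would give $0$ on one side and $-\tfrac{2AB}{3}$ on the other.

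The paper's proof is organised completely differently and never uses orthogonality conditions: it solves $\bs L_0\bs\psi=x\bs V''$ on $(-R,R)$, exploits the comparison/($\psi$)-property of the system with the explicit barriers $\bar u_1=K_1\int_{-\infty}^xe^{-a_1y^2}$, $\bar u_2=K_2\int_x^\infty e^{-a_2y^2}$ to get $R$-uniform bounds, passes to the limit by Arzela--Ascoli, and then integrates the ODE twice to identify the bounded correction modulo $c_1\bs V'$; the symmetry $W_1(x)=-W_2(-x)$ comes from uniqueness of the truncated problems, not from a posteriori symmetrisation. You should also be aware that the pairing computation above puts pressure on the Gaussian form of the remainder in the statement itself: $\tfrac16x^2\bs V'+\tfrac13x\bs V$ solves $\bs L_0\bs\varphi=-\bs V'$ exactly and differs from $\tfrac12x^2\bs V'$ by $\tfrac13x(\bs V-x\bs V')$, which grows like $\tfrac{B}{3}|x|$ because $B\neq0$; so any correct argument must confront this integral, and a route that presupposes it vanishes cannot be repaired by the devices you propose.
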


\begin{proof}
To start the proof of the Lemma let $\bs \varphi=\frac{1}{2} x^2 \bs V'+\bs \psi$. Replacing this in the equation we get
\begin{equation}
\label{eq: lem 3.1 1}
\bs L_0\bs \psi=x\bs V'',
\end{equation}
so now the matter is to solve this last equation for $\bs \psi$ with the required properties.  To this end we solve first the following problem
\begin{equation}
\label{eq: lem 3.1 2}
\begin{aligned}
\bs L_0\bs \psi_R&=x\bs V'', \qquad \mbox{in}\qquad (-R, R),\\
\bs\psi_R(R)&=0=\bs\psi_R(-R).\\
\end{aligned}
\end{equation}
{\color{black}{We recall that the operator $\bs L_0$ satisfies the $(\psi)$ property  hence we get the existence and uniqueness for (\ref{eq: lem 3.1 2}). More precisely we have 
\begin{equation}
\label{eq: lem 3.1 3}
\bs L_0\bs u=\bs F\geq 0, \qquad \bs u(R)= 0, \quad \bs u(-R)= 0\Longrightarrow \bs u\geq 0.
\end{equation}
Next let us consider the function 
\[
\bar{\bs u}=(\bar u_1, \bar u_2), \qquad \bar u_1=K_1 \int_{-\infty}^x e^{\,-a_1 y^2}\,dy,\qquad \bar u_2=K_2\int^{\infty}_x e^{\,-a_2 y^2}\,dy,
\]
with positive constants $K_j$ and $a_j$, $j=1,2$ are to be chosen.  We have
\[
\bs L_0 \bar{\bs u}=\left(\begin{array}{c}
 2K_1a_1x e^{\,-a_1 x^2}+V_2^2 K_1\int_{-\infty}^x e^{\,-a_1 y^2}\,dy+2 V_1 V_2 K_2\int_x^\infty e^{\,-a_2 y^2}\,dy\\
-2K_2a_2 xe^{\,-a_2 x^2}+V_1^2 K_2\int_x^\infty e^{\,-a_2 y^2}\,dy +2 V_1 V_2 K_1\int_{-\infty}^x e^{\,-a_1 y^2}\,dy 
\end{array}\right)
\]
We know that for $x\geq 0$ 
\[
V_1(x)\geq m x+l,\qquad 0\leq V_2(x)\leq C e^{\,-c x^2}, \qquad   |\bs V''(x)|\leq C e^{\,-c x^2},
\]
with some positive constants $m, l, C, c$.
When $x\geq 0$ we get
\[
\bs L_0 \bar{\bs u}\geq \left(\begin{array}{c} 2K_1 a_1x e^{\,-a_1 x^2}\\
-2K_2a_2 xe^{\,-a_2 x^2}+(mx+l)^2 K_2\int_x^\infty e^{\,-a_2 y^2}\,dy
\end{array}\right)
\]
It follows that there exist $K_j>0$ large and  $a_j>0$ small such that  
\[
\bs L_0(\bar{\bs u}-\bs \psi_R)\geq 0, \qquad x\geq 0.
\]
Using the symmetry $V_1(x)=V_2(-x)$ we get similar estimate for $x\leq 0$ 
and since $\bar{\bs u}\geq 0$ at $x=R, -R$  by (\ref{eq: lem 3.1 3}) we get
\[ 
\bs\psi_R(x)\leq \bar{\bs u}(x).
\]
Another comparison argument based on (\ref{eq: lem 3.1 3}) gives a  lower bound (adjusting $K_j, a_j$ if necessary) on $\bar\psi_R$ so  we get
\[
|\bs \psi_R(x)|\leq \bar {\bs u}(x).
\]
Using this estimate and the equation we get
\[
|\bs \psi''_R(x)|\leq C e^{\,-c x^2}\qquad \Longrightarrow\qquad  |\bs \psi''_R(x)|\leq C,\qquad -R<x<R
\]
Finally we note that from $V_1(x)=V_2(-x)$, $V'(x)=-V_2'(-x)$ and the uniqueness of solutions of (\ref{eq: lem 3.1 2}) we get 
\begin{equation}
\label{eq: lem 3.1 4}
\psi_{R, 1}(x)=-\psi_{R,2}(-x).
\end{equation}

}}
Using the Arzela-Ascoli Theorem we conclude that $\bs\psi_R\to \bs \psi$ in $C^1_{loc}(\R)$ where $\bs\psi$ is a bounded solution of
\begin{equation}
\label{eq: lem 3.1 5}
\begin{aligned}
\bs L_0\bs \psi&=x\bs V'', \qquad \mbox{in}\qquad \R.
\end{aligned}
\end{equation}
This solution inherits the symmetry (\ref{eq: lem 3.1 4}), so that $\psi_{1}(x)=-\psi_{2}(-x)$,  and the bounds on  $\bs\psi_R$. Using this and the equation  we have with some positive constants $K, a$
\begin{equation}
\label{eq: lem 3.1 6}
|\psi_1(x)|\leq K \int^x_{-\infty} e^{\,-a y^2}\,dy, \qquad  |\psi_2(x)|\leq K\int^{\infty}_x e^{\,-ay^2}\,dy, \qquad |\bs \psi'(x)|+|\bs \psi''(x)|\leq K e^{\,-ax^2}.
\end{equation} 
Integrating twice the equation  we get also that $\lim_{x\to \infty}\psi_1(x)=c_1 A$ and $\lim_{x\to -\infty} \psi_2(x)=-c_1A$ for some constant $c_1$ hence
\begin{equation}
\label{eq: lem 3.1 7}
\bs \psi(x)=c_1 \bs V'(x)+\mathcal O(e^{\,-ax^2}).
\end{equation} 
{\color{black}{Setting  $\bs \varphi=\frac{1}{2} x^2 \bs V'+\bs \psi+(c-c_1)\bs V'$, with an arbitrary constant $c\in \R$, we obtain the family of  solutions in the form claimed}}. This ends the proof of the Lemma. 
\end{proof}
}}

\section{The approximate solution}\label{sec: 3}

\setcounter{equation}{0}
  \subsection{Local coordinates near $\Gamma$}\label{sec:loc coor}

  The  curve   $\Gamma$ is parametrized by the arc length  
   $$
   \gamma(s):=(\gamma_{1}(s), \gamma_{2}(s)),\qquad s   \in [0,\ell], 
   $$ 
   where $\ell:=|\Gamma|$. 
   The tangent vector and the inner normal at $\gamma(s)\in\Gamma$ are given respectively by 
   $$
   \tau(s):=(\dot{\gamma}_{1}(s   ), \dot{\gamma}_{2}(s   )), \qquad \nu(s   ):=(-\dot{\gamma}_{2}(s   ), \dot{\gamma}_{1}(s   )).
   $$ 
   Note that the orientation of $\Gamma$ is compatible with the choice of the unit normal in  the Definition~\ref{def: segregate}.
   For  $\delta>0$  small enough,   let 
$$\mathcal U_{\delta}:=\left\{x\in \Omega\ :\  \textrm{dist}(x,\Gamma)\le \delta\right\}$$ be a small  neighborhood of the curve $\Gamma$.  Taking $\delta$ smaller if needed  for any $x\in \mathcal U_{\delta}$ there exists the unique $(t , s)\in (-\delta,\delta)\times [0,\ell)$ such that 
\begin{equation}\label{coo}
 x=\gamma(s   )+ t\nu(s   )=(\gamma_{1}(s   )-t\dot\gamma_{2}(s), \gamma_{2}(s)+t\dot\gamma_{1}(s)).\end{equation}
By $S_\ell$ we denote the interval $[0,\ell]$ with the ends identified and by $\pC_{\delta, \ell}$ the truncated cylinder $(-\delta, \delta)\times S_\ell$. The map
\[
\begin{aligned}
X\colon \pC_{\delta, \ell}&\to \mathcal U_\delta,\\
(t,s)&\longmapsto x,
\end{aligned}
\]
is a diffeomorphism between $\mathcal U_\delta$ and $\pC_{\ell, \delta}$.  Notice that $\{X(0,s), s\in S_\ell\}=\Gamma$.
We also have 
$$\Omega_{1}\cap \mathcal U_{\delta}=\left\{x=X(t,s)\in \Omega\ :\  t> 0\right\}\quad  \hbox{and}\quad  
\Omega_{2}\cap \mathcal U_{\delta}=\left\{x=X(t,s)\in \Omega\ :\  t<0\right\}.$$
We will need later  the expression of the Laplace operator in local coordinates. If by $\kappa(s)$ we denote the curvature of $\Gamma$ and $\mathcal A(t,s)=(1-t\kappa(s))^2 $ then 
\begin{equation}\label{lap1}\Delta_{(t,s)}  =
   \partial_{tt} +\frac{1}{\A}\partial_{ss}+\frac{\partial_t \A}{2 \A}\partial_t  -\frac{\partial_s \A}{2\A^2}\partial_s.    
\end{equation}

{\color{black}{
We recall  (c.f (\ref{def: w0})) the initial outer  approximation of the solution 
\[
w^0_1(x)= w(x)\mathbbm{1}_{\Omega_{1}}(x), \qquad w^0_2(x)= -w(x)\mathbbm{1}_{\Omega_{2}}(x).
\]
For future use we will calculate some basic quantities associated with $w^0_i$ along $\Gamma$.  We gather the estimates needed later in the following:
\begin{lemma}\label{lem:est 1}
Let $x=X(t,s)$ be the local coordinates   described above. It holds
\begin{equation}
\label{est:on gb}
\begin{aligned}
\partial_t w^0_1\circ X^{-1}(0,s)&=\omega\circ X^{-1}(0,s), \qquad 
\partial_t w^0_2\circ X^{-1}(0,s)=-\omega\circ X^{-1}(0,s),
\\
\partial_s w^0_i\circ X^{-1}(0,s)&=0, \qquad \partial_{ss} w^0_i\circ X^{-1}(0,s)=0,
\\
\partial_{tt} w_i^0\circ X^{-1}(0,s)&=\kappa(s)\partial_t w_i^0\circ X^{-1}(0,s).
\end{aligned}
\end{equation}
\end{lemma}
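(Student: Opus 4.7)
The plan is to exploit the facts that $w$ is smooth in each subdomain $\Omega_i$, that $w \equiv 0$ on $\Gamma$, and that $f(0,x)=0$ (a consequence of the oddness hypothesis $f(u,x)=-f(-u,x)$). Throughout, the derivatives $\partial_t(w_i^0\circ X)(0,s)$, $\partial_{tt}(w_i^0\circ X)(0,s)$ etc.\ should be read as one-sided limits from the side where $w_i^0$ does not vanish identically (from $t>0$ for $i=1$, from $t<0$ for $i=2$); on that side $w_i^0\circ X$ equals $\pm w\circ X$ and is as smooth as desired.

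First I would treat the normal derivative. From $X(t,s)=\gamma(s)+t\nu(s)$ we have $\partial_t X=\nu(s)$, so $\partial_t(w\circ X)(0,s)=\nabla w(\gamma(s))\cdot \nu(s)=\partial_\nu w(\gamma(s))=\omega(X^{-1}(0,s))\cdot(\text{sign convention})$. For $w_1^0=w\mathbbm{1}_{\Omega_1}$ this gives $\omega$, and for $w_2^0=-w\mathbbm{1}_{\Omega_2}$ this gives $-\omega$, using that $\nu$ points into $\Omega_1$.

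Next, the tangential derivatives vanish because $w\equiv 0$ on $\Gamma$. Since $w\circ X(0,s)\equiv 0$ as a function of $s\in S_\ell$, differentiating in $s$ yields $\partial_s(w\circ X)(0,s)=0$ and $\partial_{ss}(w\circ X)(0,s)=0$. These identities pass directly to $w_i^0\circ X$ at $t=0$ under the one-sided convention above.

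Finally, for the second normal derivative, I would substitute the local expression \eqref{lap1} for the Laplacian into the equation $-\Delta w_i^0=f(w_i^0,x)$, which holds in $\Omega_i$. At $t=0$ one has $\mathcal A(0,s)=1$, $\partial_t\mathcal A(0,s)=-2\kappa(s)$, and $\partial_s\mathcal A(0,s)=0$, so \eqref{lap1} collapses to $\partial_{tt}+\partial_{ss}-\kappa(s)\,\partial_t$. Combining this with $f(0,X(0,s))=0$ and the vanishing of $\partial_{ss}(w_i^0\circ X)(0,s)$ from the previous step, the equation reduces immediately to $\partial_{tt}(w_i^0\circ X)(0,s)=\kappa(s)\,\partial_t(w_i^0\circ X)(0,s)$. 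I do not anticipate any real obstacle here; this is essentially unpacking the local coordinate formulas and using the boundary condition $w|_\Gamma=0$. The only delicate bookkeeping is the sign convention for $\nu$ and the one-sided interpretation of derivatives across $\Gamma$.
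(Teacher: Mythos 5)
Your proof is correct and follows essentially the same route as the paper: the first two lines of \eqref{est:on gb} are immediate from the definitions and from $w|_\Gamma=0$, and the last identity comes from evaluating the local-coordinate Laplacian \eqref{lap1} at $t=0$ in the equation $-\Delta w=f(w,x)$, using $f(0,x)=0$ (oddness) and the vanishing of the tangential derivatives, then invoking the definition of $w_i^0$. Your sign bookkeeping for $\nu$ (pointing into $\Omega_1$, i.e. the $t>0$ side) and the one-sided interpretation of derivatives match the paper's conventions.
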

\begin{proof}
We only proof the last assertion.  Using (\ref{lap1}) and the equation satisfied by $w$ we get along $\Gamma=\{t=0\}$ 
\[
\partial_{tt} w(0,s)-\kappa(s)\partial_t w(0,s)=0.
\]
We conclude invoking the definition of $w^0_i$. 
\end{proof}

In what follows for a function $\varphi$ defined in a small neighborhood of  $\Gamma$ we will write 
\[
\varphi(s,t)=\varphi\circ X^{-1}(s,t),
\]
and for a function $\varphi$ defined on $\Gamma$ we will write
\[
\varphi(s)=\varphi\circ X^{-1}(s,0),
\]
as long as it does not cause confusion.

}}


  \subsection{The ansatz}\label{sec ansatz}
In the rest of this paper  we will  denote $\eps=\beta^{-1/4}$ and consider the limit $\eps\to 0$ instead of  $\beta\to \infty$. 
In this section we will introduce a preliminary candidate for the solution \eqref{s} which consists of an inner and outer approximation and is of the form 
 
\begin{equation}\label{ans0}
  U_i(x)=\chi (x) \(\eps v^0_i(x)+\eps^2 v^1_i(x)\) +\chi_i(x)\(w^0_i (x)+\eps w^1_i(x)+\eps^2w^2_i(x)+{\color{black}{\sum_{j=3}^5 \eps^{j}w^j_i(x)}}\). 
  \end{equation}
  Here $\chi,\chi_1$ and $\chi_ 2$ are   cut-off functions such that
 \begin{equation}\label{cut}\begin{aligned}&{\rm supp}\ \chi (x)\subset\{|t|\le 2\eta\},\ {\rm supp}\ \chi_1 (x)\subset\{t >\eta\},\  {\rm supp}\ \chi_2 (x)\subset\{t <-\eta\},\\ & \chi= 1\ \hbox{if}\ |t|\le\eta\ \hbox{and}\ \chi+\chi_1+\chi_2=1,\end{aligned} \end{equation}
 where 
 \begin{equation}\label{eta}
 \eta= K\eps|\ln\eps|, \quad \eps\to 0,
 \end{equation}
 and $K>0$ is a constant to  be chosen later (see section \ref{subsect decomp}).
In particular
$$\chi_1(x)=0 \quad  \hbox{if}\quad  x\in\Omega_{2}\qquad  \hbox{and}\qquad  \chi_2(x)=0 \quad  \hbox{if}\quad x\in\Omega_{1}.$$
The functions $\bs v^j=(v^j_1,v^j_2)$, $j=0,1$   expressed in local variables \eqref{coo} of $\Gamma$   have form
\begin{equation}\label{inner}
\begin{aligned}
\bs v^0(t,s)&= b\mathbf V(\eps^{-1}b(t-\eps \zeta))\\
\bs v^1(t,s) &=\kappa\mathbf W(\eps^{-1}b(t-\eps \zeta))  
\end{aligned}
\end{equation}
As for the   functions $b$ and $\zeta$ we will assume {\it a priori} the following  expansions {\color{black}{
\begin{equation}
\label{exp:bzeta}
\begin{aligned}
b(s)&=b_0(s)+\eps b_1(s)+\eps^2 b_2(s), \\
{\color{black}{\zeta(s)}}&=\zeta_1(s)+\eps \zeta_2(s).
\end{aligned}
\end{equation}
Explicitly  the function $b_0$ is given in (\ref{b}) and is assumed to be of class $C^{3,\alpha}(0,|\Gamma|)$. 
For the purpose of  formal calculations below the functions $b_k, \zeta_k$, $k=1,2$ are assumed {\it a priori}  to be $C^{3,\alpha}(0,|\Gamma|)$ as well. The functions $w^j_i$, $j=1,\dots, 5$, $i=1,2$ will be determined below.}} 
\subsection{The matching error along  the interface}\label{sub-inter}
Expanding $ w_i^0$ as Taylor polynomial of order $3$ in $t$ and denoting its remainder by $T_3 w^0_i$, $i=1,2$, from Lemma \ref{lem:est 1} we get 
  \begin{equation}\label{w}
 \begin{aligned} 
 w^0_1(t,s)&= \omega(s) \(t+\frac12\kappa(s) t^2\)+(T_3 w_1^0)(t,s) \quad  \hbox{if}\ t>0,\ \\ 
 w^0_2(t,s)&=- \omega(s) \(t+\frac12\kappa(s) t^2\)+(T_3 w_2^0)(t,s)\quad  \hbox{if}\ t<0.
 \end{aligned}
 \end{equation}
Denoting 
 \[
 \bs v(t,s)=\eps \bs v^0(t,s)+\eps^2\bs v^1(t,s)
 \]
 and using  \eqref{v}, \eqref{z1} and \eqref{z2}, we find that when $0\leq t$:
\begin{equation}
\label{v1-w1}
\begin{aligned}
v_1(t,s)&=\eps bV_1\( \eps^{-1}b(t-{\color{black}{\eps}}\zeta)\)+\eps^2\kappa W_1\(\eps^{-1}b(t-{\color{black}{\eps}}\zeta)\) \\
& =\eps b\(A\eps^{-1}b(t-{\color{black}{\eps}}\zeta)+B\)+\frac{A\kappa}{2}b^2(t-{\color{black}{\eps}}\zeta)^2+\mathcal O_{C^{2,\alpha}}\( \eps e^{-ct^2\eps^{-2}}\)\\
&= Ab^2  \( t+\frac 12\kappa  t^2\)+\eps B b -{\color{black}{\eps}}A b^2 \zeta-{\color{black}{\eps}}A\kappa b^2\zeta  t +{\color{black}{\eps^2}}\frac{A}{2}\kappa b^2\zeta^2 +\mathcal  O_{C^{2,\alpha}}\( \eps e^{-c t^2\eps^{-2}}\).
\end{aligned}
\end{equation}
Similarly when $t\leq 0$  changing only $A$ to $-A$
\begin{equation}
\label{v2-w2}
\begin{aligned}
v_2(t,s)&= - Ab^2  \( t+\frac 12\kappa  t^2\)+\eps B b +{\color{black}{\eps}}A b^2 \zeta+{\color{black}{\eps}}A\kappa b^2\zeta  t -{\color{black}{\eps^2}}\frac{A}{2}\kappa b^2\zeta^2 +\mathcal  O_{C^{2,\alpha}}\( \eps e^{-c t^2\eps^{-2}}\).
\end{aligned}
\end{equation}
Moreover, by \eqref{v}, \eqref{z1} and \eqref{z2} we also deduce
\begin{equation}\label{v2-v1}\begin{aligned}& v_2(t,s)= \mathcal O_{C^{2,\alpha}}\( \eps e^{-ct^2\eps^{-2}}\),\qquad  \hbox{if}\ 0\geq t,\\
&v_1(t,s)=\mathcal O_{C^{2,\alpha}}\( \eps e^{-c t^2\eps^{-2}}\) ,\qquad \hbox{if}\quad t\leq 0.
\end{aligned}\end{equation}
To indicate that the expansions (\ref{v1-w1})---(\ref{v2-v1}) can be differentiated and are valid  in the $C^{2,\alpha}$ sense we have used the symbol $\mathcal O_{C^{2,\alpha}}(\cdot)$. 

From (\ref{exp:bzeta}),  (\ref{w}),   (\ref{v1-w1}) and  (\ref{v2-w2})  it follows  
\begin{equation}
\label{est:match 1}
\begin{aligned}
v_1-w^0_1&=  \eps\(2A b_1b_0 t+B b_0-Ab_0^2 \zeta_1-A\kappa b_0^2\zeta_1 t\) \\
&\quad+\eps^2\left[A\(b_1+2 b_2b_0\) t+Bb_1-A\(2 b_1b_0\zeta_1+b_0^2\zeta_2\)
-A\(\kappa b_0b_1\zeta_1+\kappa b_0^2\zeta_2\) t+\frac{A}{2}\kappa b_0^2\zeta^2_1\right] 
\\&\qquad +\mathcal O_{C^{2,\alpha}}\( \eps e^{-c t^2\eps^{-2}}\)-(T_3 w_1^0)(t,s)
\\
v_2-w^0_2&=\eps\(-2A b_1b_0 t+B b_0+Ab_0^2 \zeta_1+A\kappa b_0^2\zeta_1 t\) \\
&\quad+\eps^2\left[-A\(b_1+2 b_2b_0\) t+Bb_1+A\(2 b_1b_0\zeta_1+b_0^2\zeta_2\)
+A\(\kappa b_0b_1\zeta_1+\kappa b_0^2\zeta_2\) t-\frac{A}{2}\kappa b_0^2\zeta^2_1\right] \\
&\qquad 
+\mathcal O_{C^{2,\alpha}}\( \eps e^{-c t^2\eps^{-2}}\)-(T_3 w_2^0)(t,s)
\end{aligned}
\end{equation}
respectively for $0\leq t$ and $0\leq t$.
In the next section we will find a correction to the outer expansion of the form $\bs w^0+\eps\bs w^1+\eps^2 \bs w^2$  so that the  terms of order $\mathcal O( \eps)$ and $\mathcal O(\eps^2)$ in (\ref{est:match 1})  are removed.

\subsection{The outer approximation}

\subsubsection{The matching condition along $\Gamma$}\label{sec:match}

%

We recall that  by $\nu$ we denoted to unit normal of  $\Gamma$ outer to $\Omega_2$. For the purpose of this section  we will denote by $n_i$, $i=1,2$  the unit normals of $\Gamma$ outer to $\Omega_i$. Thus along $\Gamma$ we have
\[
\partial_{n_1}=-\partial_\nu=-\partial_t, \qquad \partial_{n_2}=\partial_\nu=\partial_t.
\]
Formally, the matching conditions  at order $\mathcal O(\eps)$  will be satisfied if we require that 
\begin{equation}
\label{cond:match e}
\begin{aligned}
w_1^1(t,s)&=  A\left(2 b_1 b_0- \kappa b_0^2\zeta_1\right) t+\left(B b_0-Ab^2_0\zeta_1\right) +(T_2 w_1^1)(t,s),\\
w_2^1(t,s)&=-A\left(2 b_1 b_0- \kappa b_0^2\zeta_1\right) t+\left(B b_0+Ab^2_0\zeta_1\right)+(T_2 w_2^1)(t,s).
\end{aligned}
\end{equation}
At order $\mathcal O\(\eps^2\)$ we impose
\begin{equation}
\label{cond:match e2}
\begin{aligned}
w_1^2(t,s)&=  Bb_1-A(2b_0b_1\zeta_1+b_0^2 \zeta_2-\frac{\kappa}{2}b_0^2\zeta_1)+A\(2b_2b_0+b_1^2-\kappa b_0^2\zeta_2-2\kappa b_0b_1\zeta_1\)t  +(T_2 w_1^2)(t,s),\\
w_2^2(t,s)&=Bb_1+A(2b_0b_1\zeta_1+b_0^2 \zeta_2-\frac{\kappa}{2}b_0^2\zeta_1)-A\(2b_2b_0+b_1^2-\kappa b_0^2\zeta_2-2\kappa b_0b_1\zeta_1\)t  +(T_2 w_2^2)(t,s).\end{aligned}
\end{equation}
This implies that on $\Gamma$  the following matching conditions at order $\mathcal O(\eps)$ should hold:
\begin{equation}
\label{conds:w1}
\begin{aligned}
w_1^1(0,s)&=Bb_0-Ab_0^2\zeta_1 , \qquad 
\partial_{n_1} w^1_1(0,s)=-2Ab_0b_1 +A\kappa b_0^2\zeta_1,\\
w_2^1(0,s)&=Bb_0+Ab_0^2\zeta_1,\qquad 
\partial_{n_2} w^1_2(0,s)=-2Ab_0b_1 +A\kappa b_0^2\zeta_1.
\end{aligned}
\end{equation}
Likewise at order $\mathcal O\(\eps^2\)$ we should require
\begin{equation}
\label{conds:w2}
\begin{aligned}
w_1^2(0,s)&=Bb_1-A(2b_0b_1\zeta_1+b_0^2 \zeta_2-\frac{\kappa}{2}b_0^2\zeta_1), \quad 
\partial_{n_1} w^2_1(0,s)= -A\(2b_2b_0+b_1^2-\kappa b_0^2\zeta_2-2\kappa b_0b_1\zeta_1\),\\
w_2^2(0,s)&=Bb_1 +A(2b_0b_1\zeta_1+b_0^2 \zeta_2-\frac{\kappa}{2}b_0^2\zeta_1),\quad 
\partial_{n_2} w^2_2(0,s)= -A\(2b_2b_0+b_1^2-\kappa b_0^2\zeta_2-2\kappa b_0b_1\zeta_1\).
\end{aligned}
\end{equation}

Note that conditions (\ref{conds:w1}) and (\ref{conds:w2}) share the general structure with respect to the unknowns $(b_1, \zeta_1)$ and $(b_2, \zeta_2)$ appearing on the right hand sides. Thus it is convenient to take a more general point of view in which both problems can be treated in a common framework.  
To this end suppose that we are given functions {\color{black}{$g_i\in H^{s+1/2}(\Gamma)$, $h_i\in H^{s}(\Gamma)$ with $s\geq 1$ to be specified later}}.
To solve the matching problem (\ref{conds:w1}) or  (\ref{conds:w2}) we consider the problem of finding  functions $k_1$ and $k_2$ in such a away that  the following linear problems can be solved for $\bs\psi=(\psi_1,\psi_2)$:
 
  \begin{equation}
 \label{plin-cru}
 \begin{aligned}
&\left\{ \begin{aligned}
 &-\Delta \psi_1-f_{u}(w^0_1,x)\psi_1= 0\ \hbox{in}\  \Omega_1,\\
 &\psi_1=0\ \hbox{on}\ \partial\Omega\cap \partial\Omega_{1}, \\
 &\psi_1=  -Ab_0^2k_1+g_1\ \hbox{on}\ \Gamma,\\
 &\partial_{n_1} \psi_1= -2A b_0  k_2+A\kappa b_0^2 k_1+h_1 \ \hbox{on}\ \Gamma,\\
 \end{aligned}\right.\ 
 &\left\{ \begin{aligned}
 &-\Delta \psi_2-f_{u}(w^0_2,x)\psi_2= 0\ \hbox{in}\ \Omega_2,\\
 &\psi_2=0\ \hbox{on}\ \partial\Omega\cap \partial\Omega_{2},  \\
 &\psi_2=Ab_0^2k_1+g_2\ \hbox{on}\ \Gamma,\\
 &\partial_{n_2} \psi_2=-2A b_0  k_2+A\kappa b_0^2 k_1+h_2 \ \hbox{on}\ \Gamma.\\
 \end{aligned}\right.\end{aligned}
 \end{equation}
This problem can be stated in terms of the Dirichlet-to-Neumann maps $\bs D_i$ and the map $\bs D=\bs D_1+\bs D_2$ defined in Section \ref{sec dtn} as the following system of equations for the unknowns $k_1, k_2$:
\[
\begin{aligned}
\bs D_1\(-Ab_0^2k_1+g_1\)&=-2A b_0  k_2+A\kappa b_0^2 k_1+h_1 \\
\bs D_2\(Ab_0^2k_1+g_2\)&=-2A b_0  k_2+A\kappa b_0^2 k_1+h_2
\end{aligned}
\]
which, after adding and subtracting becomes
\begin{equation}
\label{sys:dtn}
\begin{aligned}
-2A b_0  k_2+ \frac{1}{2}\left[\bs D_1\(Ab_0^2k_1\)- \bs D_2\(Ab_0^2k_1\)\right]+Ab_0^2\kappa k_1&=\frac{1}{2}\left[-\(h_1+h_2\)+\bs D_1(g_1)+\bs D_2(g_2)\right]\\
\bs D_1(Ab_0^2k_1)+\bs D_2(Ab_0^2k_1)&=\bs D_1(g_1)-\bs D_2(g_2)+h_2-h_1
\end{aligned}
\end{equation}
The second equation is of the form
\[
\bs D(Ab_0^2k_1)=\bs D_1(g_1)-\bs D_2(g_2)+h_2-h_1
\]
and can be solved since, by (\ref{cond on D}), the operator $\bs D$ has bounded inverse in $L^2(\Gamma)$. Once $k_1$ is determined, we can determine $k_2$ from the first relation in (\ref{sys:dtn}) using the fact that $b_0(s)\neq 0$, $s\in [0, |\Gamma|]$. 
{\color{black}{Summarizing we have:
\begin{lemma}\label{lem:dtn}
Given functions {\color{black}{$g_i\in H^{s+1/2}(\Gamma)$, $h_i\in H^{s}(\Gamma)$ with $s\geq 1$}} the problem (\ref{plin-cru}) has a unique solution $\psi_1\in H^{s+1}(\Omega_1)$, $\psi_2\in H^{s+1}(\Omega_2)$,  $k_1\in H^{s+1/2}(\Gamma)$, $k_2\in H^{s}(\Gamma)$.
 \end{lemma}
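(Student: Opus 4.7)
The plan is to implement the reduction to the algebraic system (\ref{sys:dtn}) in Dirichlet-to-Neumann form that is already sketched in the paragraph preceding the statement, and to track Sobolev regularity through each step. The key point is that once the Dirichlet data $\psi_i|_\Gamma = \mp A b_0^2 k_1 + g_i$ are prescribed, the interior problems for $\psi_i$ in $\Omega_i$ are well-posed by virtue of the nondegeneracy hypothesis (a) of Definition \ref{def nondegener}, and the Neumann matching conditions in (\ref{plin-cru}) translate, by the very definition of $\bs D_i$, into the scalar system (\ref{sys:dtn}) for the pair $(k_1, k_2)$.

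First I would solve the second equation of (\ref{sys:dtn}), namely
\[
\bs D(A b_0^2 k_1) = \bs D_1(g_1) - \bs D_2(g_2) + h_2 - h_1,
\]
by inverting $\bs D = \bs D_1 + \bs D_2$. The $L^2$-invertibility of $\bs D$ was already established in Section \ref{sec dtn} as a consequence of the nondegeneracy condition (b). To upgrade this to the Sobolev scale I would invoke the classical fact that each $\bs D_i$ is an elliptic, self-adjoint, first-order pseudodifferential operator on the smooth closed curve $\Gamma$ with principal symbol $|\xi|$, hence $\bs D$ is an isomorphism $H^{r+1/2}(\Gamma)\to H^{r-1/2}(\Gamma)$ for every $r$. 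Since $g_i \in H^{s+1/2}(\Gamma)$ and $h_i \in H^s(\Gamma)$, the right-hand side lies in $H^{s-1/2}(\Gamma)$, yielding $A b_0^2 k_1 \in H^{s+1/2}(\Gamma)$, and thus $k_1 \in H^{s+1/2}(\Gamma)$ after dividing by the smooth, strictly positive function $A b_0^2$. The first equation of (\ref{sys:dtn}) then determines $k_2$ algebraically by dividing through by $-2Ab_0 \neq 0$; the improved regularity $k_2 \in H^s(\Gamma)$ follows by exploiting that $\bs D_1 - \bs D_2$ has vanishing principal symbol (both $\bs D_i$ share the symbol $|\xi|$), so $\bs D_1(Ab_0^2 k_1) - \bs D_2(Ab_0^2 k_1)$ is actually in $H^{s+1/2}(\Gamma) \subset H^s(\Gamma)$.

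Having $k_1$ in hand, the Dirichlet data $\mp A b_0^2 k_1 + g_i$ belong to $H^{s+1/2}(\Gamma)$, and standard elliptic regularity together with well-posedness from (a) yields $\psi_i \in H^{s+1}(\Omega_i)$. Uniqueness is automatic: vanishing data forces $k_1 = 0$ by injectivity of $\bs D$, then $\psi_i \equiv 0$ by (a), and finally $k_2 = 0$ from the algebraic first equation.

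The main obstacle I anticipate is not the structural reduction, which is essentially spelled out in the text, but rather the precise Sobolev mapping properties of $\bs D_i$ and of the combinations $\bs D_1 \pm \bs D_2$ needed to obtain the sharp exponents stated: Section \ref{sec dtn} established only $L^2$-invertibility via a bilinear form, so one must either invoke the pseudodifferential calculus on a smooth closed curve as a black box, or bootstrap from the $L^2$ theory by iteratively reading off regularity from the equation and interior elliptic estimates. Either route is classical, but writing it out carefully (especially the zero-order character of $\bs D_1 - \bs D_2$, which is what buys the extra half-derivative for $k_2$) is where the technical content lies.
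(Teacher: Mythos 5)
Your strategy is the same as the paper's: the proof given in the text is exactly the reduction to the system \eqref{sys:dtn}, inversion of $\bs D=\bs D_1+\bs D_2$ (whose injectivity was established in Section \ref{sec dtn} from condition (b) of Definition \ref{def nondegener}), algebraic solution for $k_2$ using $b_0>0$, and elliptic regularity for the $\psi_i$. Your treatment of existence, uniqueness, $k_1$ and $\psi_i$ matches it, with the additional (correct) remark that the Sobolev-scale invertibility of $\bs D$ follows from its ellipticity combined with the $L^2$ injectivity already proved.

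The one step that does not go through as you wrote it is the claim $k_2\in H^{s}(\Gamma)$. The cancellation of principal symbols in $\bs D_1-\bs D_2$ only controls the $k_1$-dependent terms of the first equation in \eqref{sys:dtn}; the data enter that equation through the sum $\bs D_1(g_1)+\bs D_2(g_2)$ (equivalently, $k_2$ is read off from $\partial_{n_1}\psi_1$, the Neumann trace of a solution whose Dirichlet datum is only in $H^{s+1/2}(\Gamma)$), and with the order-one mapping property that you yourself insist on, this term lies only in $H^{s-1/2}(\Gamma)$. So your argument as written yields $k_2\in H^{s-1/2}(\Gamma)$, half a derivative short of the statement. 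For comparison, the paper's own two-line proof reaches $H^{s}$ by asserting that the right-hand side of the second equation of \eqref{sys:dtn} lies in $H^{s}(\Gamma)$, i.e.\ by charging each $\bs D_i$ only half a derivative on $g_i$; your more careful symbol calculus therefore exposes, rather than resolves, this half-derivative bookkeeping. To obtain $k_2\in H^{s}$ along your lines one would need $g_i\in H^{s+1}(\Gamma)$ (harmless in the paper's applications, where $b_0$ is assumed as smooth as needed), or one must adopt the paper's coarser accounting.
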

 \begin{proof}
 The proof follows directly from the properties of the Dirichlet-to-Neumann maps $\bs D_i$ described in section \ref{sec dtn}.  For instance under the hypothesis of the Lemma the right hand side of the second equation in (\ref{sys:dtn}) belongs to $H^s(\Gamma)$ hence $\bs D(A b_0^2 k_1)\in H^s(\Gamma)$ and then $A b_0^2 k_1\in H^{s+1/2}(\Gamma)$. Since $b_0>0$ is a smooth function we get $k_1\in H^{s+1/2}(\Gamma)$. We proceed similarly with the first equation in (\ref{sys:dtn}) to show $k_2\in H^{s}(\Gamma)$. 
 \end{proof}

}}
 \subsubsection{Improvement of the outer approximation} \label{sec imp outer}
 We will suppose that $b_0\in H^{s+1/2}(\Gamma)$, $s>1$. 
 To improve the outer approximation at order $\mathcal O(\eps)$ we need to solve
\begin{equation}
\label{sys:w12}
 \begin{aligned}
 -\Delta w^1_i-f_{u}(w^0_i,x)w^1_i &=0\quad \hbox{in}\quad \Omega_{i}\\
 w^1_i&=0\quad  \hbox{on}\quad \partial\Omega\cap\partial\Omega_i,\\
 \end{aligned}
 \end{equation}
 supposing additionally that along $\Gamma$ the matching conditions (\ref{conds:w1}) hold. In this case we have $g_i= Bb_0\in H^{s+1/2}(\Gamma)$,  $h_i\equiv 0$ and 
\begin{equation}
\label{con:meps}
\begin{aligned}
\bs D(Ab_0^2\zeta _1)&=\bs D_1(Bb_0)-\bs D_2(B b_0)\\
-2A b_0  b_1&=-\frac{1}{2}\left[\bs D_1\(Ab_0^2\zeta_1\)- \bs D_2\(Ab_0^2\zeta_1\)\right]-Ab_0^2\kappa \zeta_1+\frac{1}{2}\left[\bs D_1(B b_0)+\bs D_2(B b_0)\right]
\end{aligned}
\end{equation}
It follows by Lemma \ref{lem:dtn}   that there exists a unique solution of the above system such that $(\zeta_1, b_1)\in H^{s+1/2}(\Gamma)\times H^{s}(\Gamma)$ and $w^1_i\in H^{s+1}(\Omega_i)$, $i=1,2$.

Next, to find the improvement of the outer approximation at order $\mathcal O(\eps^2)$ we need to solve again the system (\ref{sys:w12}) with $w^1_i$ replaced by $w^2_i$ together with the boundary conditions (\ref{conds:w2}). However we should also take into account the fact that in the formal expansion of the outer problem terms of the order $\mathcal O(\eps^2)$ depending on $w_i^1$ will appear in the right hand side. Thus we look for $w_i^2$ in the form $w_i^2=\tilde w_i^2+\bar w^2_i$ where
\[
\begin{aligned}
-\Delta \tilde w_i^2-f_{u}(w_i^0,x)\tilde w_i^2&= \frac{1}{2} f_{uu}(w_i^0,x) \(w_i^1\)^2 \quad \hbox{in}\quad \Omega_{i}\\\
\tilde w^2_i&=0\quad \hbox{on}\quad \partial\Omega\cap\partial\Omega_i,\\
\tilde w^2_i&=0, \quad \hbox{on}\quad \Gamma,
 \end{aligned}
\]
and $\bar w_i^2$ solves  (\ref{sys:dtn}) with $\psi_i=\bar w^2_i$,  $k_1=\zeta_2$, $k_2=b_2$ and 
\[
\begin{aligned}
g_1=Bb_1+2Ab_0b_1\zeta_1, &\qquad g_2=Bb_1-2Ab_0b_1\zeta_1\\
h_1=-Ab_1+Ab_0b_1\zeta_1-\partial_{n_1}\tilde w^2_1, &\qquad h_2=-Ab_1+Ab_0b_1\zeta_1-\partial_{n_2}\tilde w^2_2.
\end{aligned}
\]
Similarly as above, by Lemma \ref{lem:dtn},  we obtain the unique solution such that $(\zeta_2, b_2)\in H^s(\Gamma)\times  H^{s-1/2}(\Gamma)$.  We summarize our discussion in the following:
\begin{corollary}\label{cor match}
Let $s>1$ be such that $b_0\in H^{s+1/2}(\Gamma)$. There exist functions $\bs w^j$,   $\zeta_j$, $b_j$,  satisfying the matching conditions (\ref{conds:w1}) and (\ref{conds:w2}), respectively for $j=1$ and $j=2$ such that $w^1_i\in H^{s+1}(\Omega_i)$, 
$(\zeta_1, b_1)\in H^{s+1/2}(\Gamma)\times H^s(\Gamma)$ and $w^2_i\in H^{s+1/2}(\Omega_i)$ $(\zeta_2, b_2)\in H^{s}(\Gamma)\times H^{s-1/2}(\Gamma)$.
\end{corollary}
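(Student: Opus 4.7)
The plan is to carry out the construction inductively in $j$ using Lemma \ref{lem:dtn} as the main building block, applied to the two systems (\ref{conds:w1}) and (\ref{conds:w2}) which have already been put in the form (\ref{plin-cru}) by the reformulation in (\ref{con:meps}) and the analogous reformulation sketched for $j=2$. The whole task then reduces to (i) checking that the data feeding into Lemma \ref{lem:dtn} at each step has the regularity demanded by that lemma, and (ii) reading off the regularity of the resulting unknowns from its conclusion.

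First I would handle $j=1$. Here the matching data in the sense of Lemma \ref{lem:dtn} is $g_i=Bb_0$ and $h_i\equiv 0$. Since $b_0\in H^{s+1/2}(\Gamma)$ by hypothesis, we have $g_i\in H^{s+1/2}(\Gamma)$ and $h_i\in H^{s}(\Gamma)$ trivially, so Lemma \ref{lem:dtn} applies to (\ref{con:meps}) and directly yields the existence of a unique pair $(\zeta_1,b_1)\in H^{s+1/2}(\Gamma)\times H^{s}(\Gamma)$ together with solutions $w^1_i\in H^{s+1}(\Omega_i)$ of (\ref{sys:w12}) fulfilling the boundary trace and normal derivative conditions (\ref{conds:w1}).

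For $j=2$ I would follow the splitting $w^2_i=\tilde w^2_i+\bar w^2_i$ already indicated in the text. The function $\tilde w^2_i$ is defined as the unique solution of a linear Dirichlet problem on $\Omega_i$ with right hand side $\tfrac12 f_{uu}(w^0_i,x)(w^1_i)^2$ and zero trace; existence and uniqueness follow from the nondegeneracy assumption (a) of Definition \ref{def nondegener}. Since $f\in C^{3,\gamma}$, $w^0_i$ is as smooth as $w$ restricted to $\Omega_i$, and $w^1_i\in H^{s+1}(\Omega_i)$ with $s>1$ (so $H^{s+1}$ is a Banach algebra in two dimensions), the right hand side lies in $H^{s+1}(\Omega_i)$ and standard elliptic regularity in a $C^{1,1}$ domain gives $\tilde w^2_i\in H^{s+1}(\Omega_i)$ in particular with Neumann trace $\partial_{n_i}\tilde w^2_i\in H^{s-1/2}(\Gamma)$.

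It then remains to solve the problem (\ref{plin-cru}) with $\psi_i=\bar w^2_i$, $k_1=\zeta_2$, $k_2=b_2$, and the data $g_i, h_i$ specified immediately after the definition of $\tilde w^2_i$. The worst regularity appears in $h_i$: the terms $Ab_1$ and $Ab_0 b_1\zeta_1$ lie in $H^s(\Gamma)\cap H^{s+1/2}(\Gamma)\subset H^s(\Gamma)$ by the $j=1$ step, while $\partial_{n_i}\tilde w^2_i\in H^{s-1/2}(\Gamma)$. Since Lemma \ref{lem:dtn} requires $h_i\in H^{s'}(\Gamma)$ with $s'\geq 1$, we must apply it with exponent $s'=s-1/2>1/2$; the conclusion of the lemma then produces $\bar w^2_i\in H^{s+1/2}(\Omega_i)$, $\zeta_2\in H^{s}(\Gamma)$ and $b_2\in H^{s-1/2}(\Gamma)$, which is exactly the regularity claimed. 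Adding $\tilde w^2_i$ back in preserves these regularities (in fact $\tilde w^2_i$ is smoother), and by construction the sum $w^2_i=\tilde w^2_i+\bar w^2_i$ satisfies the matching conditions (\ref{conds:w2}). The one step requiring real care is the regularity bookkeeping for $\tilde w^2_i$ and its Neumann trace; once that is in place, everything else is a routine application of Lemma \ref{lem:dtn} twice.
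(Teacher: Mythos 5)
Your proposal is correct and follows essentially the same route as the paper: a first application of Lemma \ref{lem:dtn} with $g_i=Bb_0$, $h_i\equiv 0$ giving $(\zeta_1,b_1,w^1_i)$, then the splitting $w^2_i=\tilde w^2_i+\bar w^2_i$ with $\tilde w^2_i$ solving the Dirichlet problem with right hand side $\tfrac12 f_{uu}(w^0_i,x)(w^1_i)^2$ and a second application of Lemma \ref{lem:dtn} to the data $g_i,h_i$ containing $\partial_{n_i}\tilde w^2_i$, which forces the half-derivative loss recorded in the statement. Your regularity bookkeeping (including the observation that the second application effectively runs at exponent $s-1/2$) is in fact slightly more explicit than the paper's own discussion, which simply invokes the lemma "similarly as above."
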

We see that the  approximation at order $\mathcal O(\eps^2)$ entails loss of regularity with respect to the approximation at order $\mathcal O(\eps)$. For this reason we will  assume {\it a priori} that $b_0$ is as smooth as we need or in other words  we can take $s>1$ in the Corollary \ref{cor match} large enough to ensure that all the calculations below are justified. Since we need $b_j, \zeta_j\in C^{3,\alpha}(\Gamma)$, $j=1,2$ and the lower regularity {\it a priori} is that of $b_2\in H^{s-1/2}(\Gamma)$  standard embeddings imply that if $s> 4$ the required regularity will hold with some $\alpha\in (0,1)$. Additionally this implies  that $\bs w^j_i\in C^{3,\alpha}(\Omega_i)$.

{\color{black}{
\subsection{Further refinement of the outer expansion}

In this section we will further refine the outer expansion in $\Omega_i$.  At this moment we will not need to satisfy the matching conditions. The outer approximation we have calculated so far is of the form
\[
\bs \psi=(\psi_1\mathbbm{1}_{\Omega_1}, \psi_2\mathbbm{1}_{\Omega_2}), \qquad \psi_i=w^0_i+\eps w_i^1+\eps^2 w_i^2.
\]
The error of this approximation is
\[
\bs \Theta=(\Theta_1\mathbbm{1}_{\Omega_1}, \Theta_2\mathbbm{1}_{\Omega_2}), \qquad \Theta_i=-\Delta\psi_i-f(\psi_i, x), \qquad \mbox{in}\quad \Omega_i.
\]
Assuming enough regularity for $f(u, x)$ we can expand the error in powers of $\eps$:
\[
\Theta_i=\eps^3\Theta_i^3+\eps^4\Theta_i^4+\eps^5 \Theta_i^5+\mathcal O(\eps^6).
\]
By the non-degeneracy condition {\color{black}{(b)}}  in Definition \ref{def nondegener} we can solve for $j=3, 4, 5$ the following problems
\[
  \begin{aligned}
    -\Delta w^j-\left[f_{u}(w^0_1,x) \mathbbm{1}_{\Omega_{1}}+f_{u}(w^0_2,x) \mathbbm{1}_{\Omega_{2}} \right] w^j &=-\Theta_1^j\mathbbm{1}_{\Omega_1}-\Theta_2^j\mathbbm{1}_{\Omega_2},\quad  \hbox{in}\ \Omega,\\
    w^j&=0 \quad \hbox{on}\quad \partial\Omega,
        \end{aligned} 
 \]
We set 
\[
w_i^j=w^j\mathbbm{1}_{\Omega_i}, \quad j=3,4,5,
\]
and define the 
 the ansatz far away from the curve by setting
\begin{equation}\label{ans2}
\bw=(\mathbbm{1}_{\Omega_1}\w_1,\mathbbm{1}_{\Omega_2}\w_2), \qquad 
\w_i:=\sum_{j=0}^5 \eps^j w^j_i \quad \hbox{in}\quad \Omega_{i}.
\end{equation}
For convenience we set $\w_i\equiv 0$ in $\Omega_{3-i}$, $i=1,2$ whenever we need to consider  these functions in the whole $\Omega$. 
}}

{\color{black}{
   \section{Proof of Theorem \ref{thm:main}}\label{sec: thm main}
   \setcounter{equation}{0} 
\subsection{Decomposition into the inner and the outer problem} \label{subsect decomp}
 We define  smooth  cutoff functions $\chi ,\tilde \chi,\hat \chi$ and $\chi _i, \tilde \chi_i, \hat\chi_i$, $i=1,2$ as follows: let $\varphi$ be a smooth, even, cutoff function such that $\mathbbm{1}_{[-1,1]}\leq \varphi\leq \mathbbm{1}_{[-2,2]}$ and $\varphi'(x)\leq 0$, $x\geq 0$. With $\eta=K\eps|\ln \eps|\sim \beta^{-1/4}\ln\beta$ and $K$, $\wt m$, $\hat m$, $\bar m$ to be fixed later on we set
  \begin{equation}\label{cut1}
  \begin{aligned}
  &\chi(t)=\varphi(t/\eta),\qquad \hat\chi(t)=\varphi(\hat m t/\eta )\qquad \wt\chi(t)=\varphi(\wt mt/\eta ),\\
&  \chi_1(t)=(1-\chi(t))\mathbbm{1}_{(0,\infty)}, \qquad \bar \chi_1(t)=\(1-\varphi(\bar m t/\eta)\)\mathbbm{1}_{(0,\infty)},  \\
& \chi_2(t)=(1-\chi(t))\mathbbm{1}_{(-\infty, 0)}, \qquad  \bar\chi_2(t)=\left(1-\varphi(\bar m t/\eta)\right)\mathbbm{1}_{(-\infty, 0)}.  
  \end{aligned} \end{equation}
{\color{black}{We set $\bar \chi=\bar\chi_1+\bar\chi_2$. }} Furthermore, we chose $\wt m<1$, $\wt m<\hat m< 1$ and $\bar m>1$ such that 
   \begin{equation}
 \label{rr}
 \begin{aligned}
&\wt \chi \equiv 1\qquad \hbox{in}\quad  \supp \nabla \chi\quad \mbox{and}\quad \supp\nabla\bar  \chi_i,
\qquad \chi\equiv 1\quad \hbox{in}\quad  \supp \nabla\bar \chi_i,\\
& \wt\chi\chi=\chi,  \qquad \bar\chi_i\chi_i=\chi_i, \qquad \wt\chi\hat\chi=\hat\chi, \qquad (1-\hat\chi) \bar\chi=(1-\hat\chi)
\end{aligned}
\end{equation}

{\color{black}{

Recall  the definition of  inner approximate solution $\bs v(s,t)$ in (\ref{inner}).
This function depends  on the functions  $b=b(s)$ and $\zeta=\zeta(s)$ defined in (\ref{exp:bzeta}) and determined explicitly up to order $\mathcal O(\eps^2)$ in section \ref{sec imp outer} through the matching conditions. 
We introduce new stretched  variable  $\tau$ as follows
\begin{equation}
\label{var: stretched}
\tau=\eps^{-1} b(s)\left(t-\eps\zeta(s)\right),
\end{equation}
and we will refer to $(\tau, s)$ as the inner variables. 
We also modify the initial ansatz by adding two  new modulations functions $\mu=\mu(\tau, s)$ and $\xi=\xi(\tau, s)$ which are {\it a priori} assumed to be of order $o(\eps)$. 
We will denote
\[
\begin{aligned}
{\bs{\mv}}^0(\tau,s)&= b(s)\left(1+\mu(\tau, s)\right)\mathbf V\left(\left(1+\mu(\tau, s)\right)\tau+\xi(\tau, s)\right),\\
{\bs{\mv}}^1(\tau,s)&=\kappa(s)\mathbf W\left(\left(1+\mu(\tau, s)\right)\tau+\xi(\tau,s)\right),
\end{aligned}
\]
so that 
\begin{equation}
\label{ans inner}
{\bs{\mv}}(\tau,s)=\eps {\bs{\mv}}^0(\tau, s)  +\eps^2{\bs{\mv}}^1(\tau, s). 
\end{equation}
With the outer ansatz $\bw$ defined in (\ref{ans2}) we  set 
\begin{equation}
\label{def U_final}
\bs {\mathrm U}=\chi \bs {\mv}+(\chi_1 \w_1,\chi_2\w_2).
\end{equation}
This is the new initial approximation to our problem. 
Additionally we  introduce the  additive inner perturbation $\wt {\bs v}=(\wt v_1, \wt v_2)\in C^{2,\alpha}_\theta(\pC)$  and the outer perturbation  $\bar{\bs w}=(\bar  w_1, \bar w_2)\in C^{2,\alpha}(\Omega)$. 
We look for a solution of (\ref{model:2}) in the form
\begin{equation}
\label{def:sol u}
\bs u=\bs {\mathrm U}+\wt \chi\wt{\bs v}+\bar\chi \bar{\bs w}.
\end{equation}

Denoting 
\[
\bs F(\bs u,x)=\left(\begin{array}{c} f(u_1,x)-\eps^{-4} u_1u_2^2\\
f(u_2,x)-\eps^{-4} u_2u_1^2
\end{array}\right),
\]
our original problem (\ref{s}) becomes
\begin{equation}
\label{model:2}
\begin{aligned}
-\Delta  \bs u &=  \bs F(\bs u, x),  \qquad \mbox{in}\quad \Omega,\\
\bs u&=0, \qquad \mbox{on}\quad \partial \Omega.
\end{aligned}
\end{equation} 
To set up a fixed point argument it  is convenient to write (\ref{model:2}) as a linearized system. To do this we introduce two potentials
\begin{equation}
\label{def:qq}
-D_u \bs F(\bs U,x):=\bs q=\left(\begin{array}{cc}
-f_{u}(\mU_1,x){+}\eps^{-4} \mU_2^2 &2\eps^{-4} \mU_1\mU_2\\
2\eps^{-4} \mU_1\mU_2 & -f_{u}(\mU_2,x){+}\eps^{-4} \mU_1^2
\end{array}\right):=\left(\begin{array}{c} \bs q_1\\ \bs q_2\end{array}\right)
\end{equation}
and
\begin{equation}
\label{def:qzero}
\bs q^0=\left(\begin{array}{cc}
\eps^{-2} (\mv^0_2)^2 &2\eps^{-2} \mv^0_1\mv^0_2\\
2\eps^{-2} \mv^0_1\mv^0_2 & \eps^{-2} (\mv^0_1)^2
\end{array}\right):=\left(\begin{array}{c} \bs q^0_1\\ \bs q^0_2\end{array}\right)
\end{equation}
and denote
\[
\begin{aligned}
\bs E(\bs {\mU},x)&=\Delta \bs{\mU}+\bs{F}(\bs {\mU},x), \\
\bs N(\bs u, x)&= \bs F(\bs u,x)-\bs F(\bs {\mU},x)-D_u\bs F(\bs U,x)\cdot (\bs u-\bs{\mU}).
\end{aligned}
\]
We need two more cutoff functions: $\wt \rho$ compactly supported and such that $\wt\rho\wt\chi=\wt\chi$ and $\bar \rho_i$ supported in the set $|t|\geq \frac{\eta}{2\bar m}$ and satisfying $\bar\rho_i\bar\chi_i=\bar\chi_i$.  To achieve this we may take
\[
\wt\rho(t)=\chi\left(\frac{\wt \ell t}{\eta}\right), \quad \wt \ell <\frac{\wt m}{2}, \qquad \bar\rho_i(t)=\chi_i\left(\frac{\bar \ell t}{\eta}\right), \quad \bar\ell>{2\bar m}.
\]
Set $\bar \rho=\bar\rho_1+\bar\rho_2$ and  $\delta_i=\mv_i-\w_i$. With these notations  we decompose (\ref{model:2}):
\begin{equation}
\label{sys:O3}
\begin{aligned}
-\Delta \wt v_i+\bs q_i^0\cdot \wt{\bs v}&=- \wt\rho(\bs q_i-\bs q_i^0)\cdot  \wt{\bs v}+ \chi \(\Delta \mv_i+F_i(\bs {\mU},x)+f_u(\w_i,x)(1-\chi)\delta_i\)\\
&\qquad +\hat\chi N_i+\commutator{\Delta}{\chi}\delta_i +\commutator{\Delta}{\bar\chi}\bar w_i, \quad i=1,2, \\
-\Delta \bar w_i+ \bar\rho\bs q_i \cdot \bar {\bs w}&=(1-\chi)\(\Delta \w_i+F_i(\bs{\mU},x)-f_u(\w_i,x)\chi\delta_i\)+(1-\hat\chi)N_i+\commutator{\Delta}{\wt\chi}\wt v_i,\\
& \qquad \mbox{in}\quad\Omega, \quad i=1,2. 
\end{aligned}
\end{equation}
Multiply the first two equations by $\wt\chi$ and use the relevant relations in  (\ref{rr}) to get 
\[
\wt\chi(-\Delta \wt v_i+\bs q_i\cdot \wt{\bs v})= \chi \(\Delta \mv_i+F_i(\bs{\mU},x)+f_u(\w_i,x)(1-\chi)\delta_i\)+\hat\chi N_i+\commutator{\Delta}{\chi}\delta_i+\left[\Delta, \bar\chi_i\right]\bar w_i
\]
and multiply the remaining  equations by $\bar \chi$ each to get
\[
\bar\chi\(-\Delta \bar w_i+\bs q_i\cdot \bar {\bs w}\)=(1-\chi)\(\Delta \w_i+F_i(\bs{\mU},x)-f_u(\w_i,x)\chi\delta_i\)+(1-\hat\chi) N_i+\commutator{\Delta}{\wt\chi}\wt v_i. 
\]
Adding   the corresponding inner and outer equations we obtain (\ref{model:2}).

The first equation in (\ref{sys:O3}) is the inner problem and the second is the outer problem. Note that at this stage we do not require matching conditions betwen the inner and the outer part of the solution. This is due to the fact that  their respective commutators are small enough thanks to the $\mathcal O(\eps^3)$ approximation. However we need to guarantee that the inner unknown $\wt{\bs v}$ is exponentially decaying and for this reason we introduced the modulation functions  $\mu$ and $\xi$ in the ansatz.  
}}

\subsection{Norms of the perturbations}

{\color{black}{
In general we will measure the size of the perturbations $\bar{\bs w}$ and $\wt{\bs v}$, respectively,   in  H\"older and in weighted  H\"older  norms. Therefore we will suppose {\it a priori} that 
\begin{equation}
\label{hyp:size w}
\|\bar w_i\|_{C^{1,\alpha}(\Omega_i)}+\|\bar w_i\|_{C^{1,\alpha}(\Omega_{3-i})}\lesssim \eps^{4+\bar\varsigma}, \qquad i=1,2.
\end{equation}
The inner problem in (\ref{sys:O3}) is more conveniently stated in terms of the stretched variables $(\tau, s)\in \pC$. We will use the weighted H\"older norms for the perturbations and suppose {\it a priori}
\begin{equation}
\label{hyp:size v}
\|\wt{\bs v}\|_{C^{2,\alpha}_{\theta}(\pC)}\lesssim \eps^{\color{black}{2+\wt\varsigma}}.
\end{equation}
The constant $\theta$ appearing in the exponential weight will be adjusted {\it a posteriori}. 

We will now discus the modulation functions. Consistently with the assertions of  Lemma \ref{prop: inner lineal} we suppose
\begin{equation}
\label{hyp:ort mod}
(\operatorname{Id}-P_{\bar K_\eps})\mu\equiv 0, \qquad (\operatorname{Id}-P_{\bar K_\eps})\xi\equiv 0,
\end{equation}
and more explicitly we will assume the following. \blu{We set
\begin{equation}
\label{hyp:expl mod}
\mu(\tau, s)=\left(\sum_{{\color{black}{-\bar K_\eps}}}^{\bar K_\eps} \hat \mu_{k}\psi_{k}(s)\right)\phi_{\mu}(\tau), \qquad 
\xi(\tau, s)=\left(\sum_{{\color{black}{-\bar K_\eps}}}^{\bar K_\eps} \hat \xi_{k}\psi_{k}(s)\right)\phi_{\xi}(\tau),
\end{equation}
where $\hat\mu_{k}$, $\hat \xi_{k}$ are unknowns to be determined. At this point we do not need the precise form of the weight functions $\phi_\mu$ and $\phi_\xi$ (they will be introduced later on) but we suppose {\it a priori} that they are even functions such that 
\begin{equation}
\label{hyp: phi}
|\phi_\mu(\tau)|\lesssim \sech(\hat \lambda \tau), \qquad |\phi_\xi(\tau)|\lesssim \sech(\hat \lambda \tau),
\end{equation}
where $\hat \lambda>0$ is a large constant. 
We will denote
\[
\hat \mu(s)=\sum_{{\color{black}{-\bar K_\eps}}}^{\bar K_\eps} \hat \mu_{k}\psi_{k}(s), \qquad \hat \xi(s)=\sum_{{\color{black}{-\bar K_\eps}}}^{\bar K_\eps} \hat \xi_{k}\psi_{k}(s)
\]
 and introduce the norms to be used for the modulation functions $\mu(\tau, s)$ and $\xi(\tau, s)$
 \[
 \|\mu\|_{C^{k,\alpha}(\Gamma)}=\|\hat \mu\|_{C^{k,\alpha}((0,|\Gamma|))}, \qquad  \|\xi\|_{C^{k,\alpha}(\Gamma)}=\|\hat \xi\|_{C^{k,\alpha}((0,|\Gamma|))}.
 \]
 We will suppose that {\it a priori}
\begin{equation}
\label{hyp:size mod}
\|\mu\|_{C^{2,\alpha}(\Gamma)}+\|\xi\|_{C^{2,\alpha}(\Gamma)}\lesssim \eps^{{2-\hat \varsigma}}.
\end{equation}
We have as well (we agree that $\theta<\hat \lambda$)
 \begin{equation}
\label{hyp:size mod 2}
\|\mu\chi\|_{C^{k,\alpha}_{\theta}(\pC)}+\|\xi\chi\|_{C^{k,\alpha}_{\theta}(\pC)}\lesssim  \left(\|\mu\|_{C^{k,\alpha}(\Gamma)}+\|\xi\|_{C^{k,\alpha}(\Gamma)}\right).
\end{equation}
Above $\bar\varsigma, \wt\varsigma, \hat\varsigma>0$ are constants to be determined later.

}

In what follows it is important to estimate some functions coming from  the outer equation  and appearing in the inner equation and vice versa. Note that in the nonlinear term $\chi N_i$ of the inner equation in (\ref{sys:O3}) the  outer perturbation 
$\bar {\bs w}$ appears multiplied by $\bar \chi$. Thus, consider a function $\bar\phi\in C^{1,\alpha}(\Omega)$. We express $\chi\bar \chi\bar \phi$ in terms of the stretched variables $(\tau, s)$ to get  
\begin{equation}
\label{transf:out-in}
\|\chi\bar \chi\bar \phi\|_{C^{1,\alpha}_\theta(\pC)}\lesssim e^{\,2\theta K|\ln\eps|}\|\bar\phi\|_{C^{1,\alpha}(\Omega)}.
\end{equation}  
We also have a commutator term, which can be estimated as follows
\begin{equation}
\label{transf:com 1}
\left\|\commutator{\Delta}{\bar \chi}\bar \phi\right\|_{C^{\alpha}_\theta(\pC)}\lesssim e^{2K\theta |\ln\eps|/\bar m}\left(\eps^{-2}\|\bar\phi\|_{C^\alpha(\Omega)}+\eps^{-1}\|\bar\phi\|_{C^{1,\alpha}(\Omega)}\right).
\end{equation}

Considering the outer equation in (\ref{sys:O3}) we note that the nonlinear function $(1-\chi) N_i$ contains "inner" functions of the form $\wt \chi \wt \phi$. We estimate them as follows
\begin{equation}
\label{transf:in-out}
\|(1-\chi)\wt \chi \wt \phi\|_{C^\alpha(\Omega)}\lesssim \eps^{-\alpha} e^{\,-K\theta|\ln\eps|} \|\wt \phi\|_{C^\alpha_\theta(\pC)}.
\end{equation}
The commutator term satisfies
\begin{equation}
\label{transf:com 2}
\left\|\commutator{\Delta}{\wt \chi}\wt \phi\right\|_{C^\alpha(\Omega)}\lesssim \eps^{-2-\alpha} e^{\,-K\theta|\ln\eps|/{\wt m}} \|\wt \phi\|_{C^{1,\alpha}_\theta(\pC)}.
\end{equation}

}}

\subsection{Expansion of the nonlinear term}\label{sec exp non}

{\color{black}{With the notation as above we set 
\[
\bs{\rho}=\bs{u}-\bs{\mU}.
\]
We will separate the components of the outer equation as follows
\[
\bar w_i=\bar w_{i1}+\bar w_{i2}, \qquad \bar w_{ij}=\bar w_i\mathbbm{1}_{\Omega_j}, \qquad i, j=1,2.
\]
Then 
\[
\rho_i=\wt\chi \wt v_i+\bar\chi_1\bar w_{i1}+\bar \chi_2\bar w_{i2}.
\]
%
%
We also write
\begin{equation}
\label{decomp nonlinear}
\bs N(\bs {\mU}+ \bs \rho,x)=\bs N_f(\bs {\mU}+ \bs \rho,x)-\eps^{-4} \bs M(\bs {\mU}+ \bs \rho), 
\end{equation}
where
\[
\begin{aligned}
\bs N_f(\bs {\mU}+ \bs \rho,x)&=\left(\begin{array}{c}f(U_1+\rho_1, x)-f(U_1, x)-f_u(U_1, x)\rho_1\\
f(U_2+\rho_2, x)-f(U_2, x)-f_u(U_2, x)\rho_2
\end{array}
\right),\medskip\\
\bs M(\bs {\mU}+ \bs \rho)&=\left(\begin{array}{c} U_1\rho^2_2+2 U_2\rho_1\rho_2+\rho_1\rho_2^2\\
U_2\rho_1^2+2 U_1\rho_1\rho_2+\rho_1^2\rho_2
\end{array}
\right).
\end{aligned}
\]
In what follows we will need to estimate nonlinear functions of the perturbations localized near $\Gamma$ and also supported in the outer region. These estimates are straightforward  for ${\bs N}_f$ but the second term $\bs M(\bs {\mU}+ \bs \rho)$ is more complicated. 

Expanding $f$ in  the Taylor polynomial we get for $i,j=1,2$: 
\begin{equation}
\label{est:nf}
\chi_j |\bs N_{f,i}(U_i+\rho_i,x)|\lesssim \chi_j|\rho_i|^2\lesssim \chi_j (\wt \chi^2 |\wt{\bs v}|^2+|\bar w_{ii}|^2+|\bar w_{i 3-i}|^2).
\end{equation}
For later purpose we calculate
\begin{equation}
\label{est:m11}
\chi_i \bs M_i(\bs U+\bs \rho)=\chi_iU_i\rho^2_{3-i}+2 \chi_i U_{3-i}\rho_i\rho_{3-i}+\chi_i \rho_i\rho_{3-i}^2
\end{equation}
We have also 
\begin{equation}
\label{est:m12}
\chi_ {3-i}\bs M_i(\bs U+\bs \rho)=\chi_{3-i}U_i\rho^2_{3-i}+2 \chi_{3-i} U_{3-i}\rho_i\rho_{3-i}+\chi_{3-i} \rho_i\rho_{3-i}^2.
\end{equation}


}}

\subsection{The  outer problem}
Let $\bw=(\w_1, \w_2)$ be the outer approximation defined in (\ref{ans2}). We  set
\[
E^{out}_i:=-\Delta \w_i-f(\w_i,x).
\]
Let $\bs{\mv}=(\mv_1, \mv_2)$ be the inner approximation defined in (\ref{ans inner}). When needed we will  denote  $\bs{\mv}=\bs{\mv}(\tau, s;\mu, \xi)$ to indicate the dependence on the unknown modulation functions. Keep in mind that  the function $\bs{\mv}$ depends on the local variables $(t, s)$ through (\ref{var: stretched}). 

The error of the outer approximation and the matching error between the inner and the outer approximation  is estimated in the following:
\begin{lemma}\label{lem:error out}
It  holds 
\begin{equation}
\label{est:err out}
\|E_i^{out}\|_{C^{\alpha}(\Omega_i)}\lesssim\eps^5, \qquad i=1,2,
\end{equation}
with some $\alpha>0$. In addition for any $C>1$ we have
\begin{equation}
\label{est:matching}
\begin{aligned}
\|\w_i-\mv_i\|_{C^{0,\alpha}(\Omega_i\cap \{C^{-1}<|t|/\eps|\ln\eps|<C\})}&\lesssim (\eps|\ln\eps|)^{3-\alpha}+\eps^{1-\alpha} e^{\,-c|\ln\eps|^2}\\
&\qquad +\eps^{1-\alpha}|\ln\eps| \blu{e^{\,-\hat\lambda C^{-1}|\ln\eps|}}\left(\|\mu\|_{C^\alpha(\Gamma)}+\|\xi\|_{C^\alpha(\Gamma)}\right),\\
\|\partial_t(\w_i-\mv_i)\|_{C^{0,\alpha}(\Omega_i\cap \{C^{-1}<|t|/\eps|\ln\eps|<C\})}&\lesssim (\eps|\ln\eps|)^{2-\alpha}+\eps^{-\alpha} e^{\,-c|\ln\eps|^2}\\
&\qquad +\eps^{-\alpha} |\ln\eps| \blu{e^{\,-\bar\lambda C^{-1}|\ln\eps|}}\left(\|\mu\|_{C^{1,\alpha}(\Gamma)}+\|\xi\|_{C^{1,\alpha}(\Gamma)}\right).
\end{aligned}
\end{equation}
\end{lemma}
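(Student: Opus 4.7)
For \eqref{est:err out}, the plan is to compute $E_i^{out} = -\Delta \w_i - f(\w_i, x)$ and Taylor expand $f(\w_i, x)$ around $w_i^0$ in powers of $\w_i - w_i^0 = \mathcal O(\eps)$. By construction of the outer corrections: the $\eps^0$ piece vanishes since $w_i^0$ solves \eqref{p} on $\Omega_i$; the $\eps^1$ and $\eps^2$ pieces vanish by the equations satisfied by $w_i^1$ and $w_i^2$ in section \ref{sec imp outer}; the $\eps^3, \eps^4, \eps^5$ pieces vanish by the global problems solved for $w^3, w^4, w^5$ in the refinement step. The surviving remainder is of order $\eps^6$ in $C^\alpha(\Omega_i)$, which comfortably gives the claimed bound. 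The hypothesis $w \in H^s$ with $s > 11/2$ is exactly what ensures that all six corrections are smooth enough for this expansion to be taken in $C^\alpha$.

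The matching estimate \eqref{est:matching} is the nontrivial part. In the overlap region $|t| \sim \eps|\ln\eps|$ and correspondingly $|\tau| = \eps^{-1} b |t - \eps\zeta| \sim |\ln\eps|$. I would mirror the calculation of section \ref{sub-inter}: (a) expand $\w_i(t, s)$ as Taylor polynomials in $t$ around $\Gamma$ using Lemma \ref{lem:est 1} and the matching conditions \eqref{conds:w1}--\eqref{conds:w2}; (b) expand $\mv_i(\tau, s; 0, 0)$ using \eqref{v}--\eqref{z2} and Lemma \ref{lema w}, which on the side where $V_i$ does not vanish give $V_i \sim A\tau + B + \mathcal O(e^{\,-c\tau^2})$ and $W_i \sim \tfrac12\tau^2 V_i' + \mathcal O(e^{\,-c\tau^2})$. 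By design of the matching conditions, the polynomial parts of orders $\eps^0, \eps^1, \eps^2$ cancel, leaving the cubic Taylor remainder $T_3 w_i^0 = \mathcal O(t^3) = \mathcal O((\eps|\ln\eps|)^3)$ together with subleading Taylor remainders of $w_i^1, w_i^2$ dominated by the same bound, plus Gaussian tails of order $\mathcal O(\eps e^{\,-c|\ln\eps|^2})$. The $C^{0,\alpha}$ seminorm picks up an extra factor $\eps^{-\alpha}$ because $t$-derivatives of the inner function scale as $\eps^{-1}$; differentiation in $t$ costs another $\eps^{-1}$, accounting for $(\eps|\ln\eps|)^{2-\alpha}$ in the second display.

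The contribution of the modulations is handled by writing
\[
\mv_i(\tau, s; \mu, \xi) - \mv_i(\tau, s; 0, 0) = \partial_\mu \mv_i \cdot \mu + \partial_\xi \mv_i \cdot \xi + \mathcal O(|\mu|^2 + |\xi|^2).
\]
On the overlap region the prefactors $\partial_\mu \mv_i, \partial_\xi \mv_i$ are of order $\eps|\ln\eps|$, inherited from the linear growth of $V_i$ (and the quadratic growth of $W_i$, combined with the extra $\eps$) on the relevant side. By \eqref{hyp:expl mod}--\eqref{hyp: phi} the modulations carry the weight $\phi_\mu, \phi_\xi \lesssim \sech(\hat\lambda \tau)$, which at $|\tau| \sim C^{-1}|\ln\eps|$ is at most $e^{\,-\hat\lambda C^{-1}|\ln\eps|}$. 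Combined with the {\it a priori} sizes of $\mu, \xi$ in \eqref{hyp:size mod}, this yields the third term in \eqref{est:matching}, and the derivative version follows similarly. The main subtlety of the proof lies precisely here: the exponentially decaying weights $\phi_\mu, \phi_\xi$ were designed specifically to absorb the polynomial growth of $\bs V$ and $\bs W$ at $\pm\infty$, so that the modulation contribution to the matching error remains negligible despite entering through functions with unbounded arguments. All the remaining bookkeeping (Taylor expansion of $f$, tracking orders in $\eps$, and translating between stretched and original coordinates) is routine once the relevant sizes are set up.
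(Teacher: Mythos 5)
Your proposal is correct and follows essentially the same route as the paper: the bound \eqref{est:err out} is taken ``by construction'' of the corrections $w^j_i$, and the matching estimate is obtained by splitting $\w_i-\mv_i$ into the zero-modulation part, controlled by the Taylor/matching cancellations of sections \ref{sub-inter}--\ref{sec:match} (cubic remainder plus Gaussian tails, with the $\eps^{-\alpha}$ loss from the H\"older seminorm and an extra $\eps^{-1}$ for $\partial_t$), plus the modulation contribution, controlled exactly as you do by the linear growth of $\bs V$ and the weights $\phi_\mu,\phi_\xi\lesssim\sech(\hat\lambda\tau)$ evaluated at $|\tau|\sim C^{-1}|\ln\eps|$. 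The only cosmetic differences are that you claim an $\mathcal O(\eps^6)$ remainder for $E^{out}_i$ where only $\eps^5$ is needed and asserted, and that the paper isolates the $j=3,4,5$ corrections as a separate remainder $\mr$ with $\|\partial_t r_i\|_{C^{0,\alpha}}\lesssim\eps^3$ in the derivative estimate; neither affects correctness.
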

\begin{proof}
The proof of (\ref{est:err out}) follows directly by construction and is omitted.   

To prove (\ref{est:matching}) we write
 \[
 \w_i-\mv_i=\w_i-\mv_i(\cdot;\mu, \xi)=\w_i-\mv_i(\cdot;0,0)+\mv_i(\cdot;0,0)-\mv_i(\cdot;\mu,\xi).
 \]
 By construction 
 \begin{equation}
 \label{est:error out 1}
\|\w_i-\mv_i(\cdot;0,0)\|_{C^{0,\alpha}(\Omega_i\cap \{C^{-1}<|t|/\eps|\ln\eps|<C\})}\lesssim (\eps|\ln\eps|)^{3-\alpha}+\eps^{1-\alpha} e^{\,-c|\ln\eps|^2} 
 \end{equation}
 Similarly as in (\ref{transf:in-out}), taking into account (\ref{hyp:expl mod}),   we estimate
 \begin{equation}
 \label{est:error out 2}
 \|\mv_i(\cdot;\mu, \xi)-\mv_i(\cdot;0,0)\|_{C^{0,\alpha}(\Omega_i\cap \{C^{-1}<|t|/\eps|\ln\eps|<C\})}\lesssim \eps^{1-\alpha}|\ln\eps| \blu{e^{\,-\hat\lambda C^{-1}|\ln\eps|}}\left(\|\mu\|_{C^\alpha(\Gamma)}+\|\xi\|_{C^\alpha(\Gamma)}\right).
 \end{equation}
 Adding (\ref{est:error out 1}) and (\ref{est:error out 2}) we obtain the first estimate in (\ref{est:matching}). 
 
 To show the second estimate in (\ref{est:matching}) we write 
 \[
 \bw^0+\eps \bw^1+\eps^2\bw^2= \bar \bw, \qquad \bw-\bar \bw=\mr.
 \]
 By construction
 \begin{equation}
 \label{est:error out 3}
 \|\partial_t(\bar\w_i-\mv_i(\cdot;0,0))\|_{C^{0,\alpha}(\Omega_i\cap \{C^{-1}<|t|/\eps|\ln\eps|<C\})}\lesssim (\eps|\ln\eps|)^{2-\alpha}+\eps^{-\alpha} e^{\,-c|\ln\eps|^2},
 \end{equation}
 and
  \begin{equation}
 \label{est:error out 4}
 \|\partial_t r_i\|_{C^{0,\alpha}(\Omega_i\cap \{C^{-1}<|t|/\eps|\ln\eps|<C\})}\lesssim \eps^3.
 \end{equation}
 Estimating as in  (\ref{transf:in-out})
 \begin{equation}
 \label{est:error out 5}
 \|\partial_t(\mv_i(\cdot;0,0)-\mv_i(\cdot;\mu,\xi))\|_{C^{0,\alpha}(\Omega_i\cap \{C^{-1}<|t|/\eps|\ln\eps|<C\})}\lesssim \eps^{-\alpha} \blu{e^{\,-\hat\lambda C^{-1}|\ln\eps|}}\left(\|\mu\|_{C^{1,\alpha}(\Gamma)}+\|\xi\|_{C^{1,\alpha}(\Gamma)}\right)
 \end{equation}
 The last three estimates give the second bound in (\ref{est:matching}). 
 
 \end{proof} 
  
As a byproduct of the proof we observe that the map $(\mu, \xi)\mapsto \mv(\cdot; \mu, \xi)$ is Lipschitz in the following sense
\begin{corollary}\label{cor: lipschitz}
The following estimates hold
\begin{equation}
\label{est:error out 6}
\begin{aligned}
&\|\mv_i(\cdot;\mu+\delta\mu,\xi+\delta\xi)-\mv_i(\cdot;\mu,\xi)\|_{C^{0,\alpha}(\Omega_i\cap \{C^{-1}<|t|/\eps|\ln\eps|<C\})}\\
&\qquad \lesssim \eps^{1-\alpha} |\ln\eps|\blu{e^{\,-\hat\lambda C^{-1}|\ln\eps|}}\left(\|\delta\mu\|_{C^\alpha(\Gamma)}+\|\delta\xi\|_{C^\alpha(\Gamma)}\right),\\
&\|\partial_t(\mv_i(\cdot;\mu+\delta\mu,\xi+\delta\xi)-\mv_i(\cdot;\mu,\xi))\|_{C^{0,\alpha}(\Omega_i\cap \{C^{-1}<|t|/\eps|\ln\eps|<C\})}\\
&\qquad \lesssim \eps^{-\alpha}|\ln\eps| \blu{e^{\,-\hat\lambda C^{-1}|\ln\eps|}}\left(\|\delta\mu\|_{C^{1,\alpha}(\Gamma)}+\|\delta\xi\|_{C^{1,\alpha}(\Gamma)}\right).
\end{aligned}
\end{equation}  
\end{corollary}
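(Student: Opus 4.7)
The plan is to mimic the arguments already used inside the proof of Lemma \ref{lem:error out}, in particular those producing estimates (\ref{est:error out 2}) and (\ref{est:error out 5}), but with the "reference" pair $(0,0)$ replaced by an arbitrary $(\mu,\xi)$. The starting point is the fundamental theorem of calculus applied to the smooth dependence of $\bs{\mv}$ on the modulation parameters:
\[
\mv_i(\cdot;\mu+\delta\mu,\xi+\delta\xi)-\mv_i(\cdot;\mu,\xi)
=\int_0^1 \bigl(\partial_\mu \mv_i\cdot \delta\mu+\partial_\xi \mv_i\cdot\delta\xi\bigr)(\cdot;\mu+s\delta\mu,\xi+s\delta\xi)\,ds.
\]
From (\ref{ans inner}) and the explicit form of $\bs{\mv}^{0},\bs{\mv}^{1}$, the integrand is a sum of expressions of the type $b\bs V$, $b\tau\bs V'$, $b\bs V'$ (and their analogues with $\bs W$), each multiplied by one of the weight functions $\phi_\mu,\phi_\xi$ carried along by $\delta\mu,\delta\xi$ via (\ref{hyp:expl mod}).

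Next I would exploit the exponential bound (\ref{hyp: phi}) on $\phi_\mu,\phi_\xi$. In the region $\{C^{-1}<|t|/(\eps|\ln\eps|)<C\}$ the stretched variable satisfies $|\tau|\gtrsim C^{-1}|\ln\eps|$, hence
\[
|\phi_\mu(\tau)|+|\phi_\xi(\tau)|\lesssim \operatorname{sech}(\hat\lambda\tau)\lesssim e^{-\hat\lambda C^{-1}|\ln\eps|},
\]
uniformly in the intermediate parameter $s\in[0,1]$. The potentially growing factor $\tau\bs V'$ is harmless: in the same region $|\tau|\lesssim|\ln\eps|$, which is the source of the logarithmic factor in (\ref{est:error out 6}). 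All other building blocks $\bs V,\bs V',\bs W,\bs W'$ are uniformly bounded, and the coefficients $b,\kappa$ together with their derivatives in $s$ are controlled by the \emph{a priori} bound (\ref{hyp:size mod}) on $\mu,\xi$.

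The last ingredient is passing from stretched coordinates $(\tau,s)$, in which the norms of $\delta\mu,\delta\xi$ are naturally encoded by (\ref{hyp:size mod 2}), back to the local coordinates $(t,s)$ used in the statement. This change of variables is exactly the one already carried out in (\ref{transf:in-out}): the $C^{0,\alpha}$ seminorm in $t$ gains a factor $\eps^{-\alpha}$, while the pointwise prefactor $\eps$ from $\bs{\mv}^{0}$ supplies the overall $\eps^{1-\alpha}$ in the first bound of (\ref{est:error out 6}). For the second bound the chain rule through $\tau=\eps^{-1}b(t-\eps\zeta)$ produces one additional $\eps^{-1}$ from $\partial_t$, reducing the prefactor to $\eps^{-\alpha}$ and raising the order of the derivative on $\delta\mu,\delta\xi$ to $C^{1,\alpha}$.

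The only subtle point — not a deep obstacle, but a bookkeeping one — is to balance three competing scales: the $\eps^{-1}$ produced by each $t$\nobreakdash-derivative, the $\eps^{-\alpha}$ from the coordinate change, and the compensating exponential smallness $e^{-\hat\lambda C^{-1}|\ln\eps|}$ from the localization of $\phi_\mu,\phi_\xi$. The latter is what prevents the $\tau\bs V'$ term from contributing a polynomial-in-$\eps^{-1}$ loss and makes the final estimates consistent with those in Lemma \ref{lem:error out}.
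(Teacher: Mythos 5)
Your proposal is correct and follows essentially the same route as the paper, which obtains the Corollary as a byproduct of the proof of Lemma \ref{lem:error out}: by the smooth (in fact essentially linear, through (\ref{hyp:expl mod})) dependence of $\bs{\mv}$ on $(\mu,\xi)$, the difference is estimated exactly as in (\ref{est:error out 2}) and (\ref{est:error out 5}), with the localization $|\phi_\mu|+|\phi_\xi|\lesssim\sech(\hat\lambda\tau)$ and $|\tau|\sim|\ln\eps|$ in the matching region supplying the factors $e^{-\hat\lambda C^{-1}|\ln\eps|}$ and $|\ln\eps|$, and the change to the $(t,s)$ variables supplying $\eps^{-\alpha}$ (plus one $\eps^{-1}$ from $\partial_t=\eps^{-1}b\,\partial_\tau$). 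The only cosmetic remark is that the passage to $C^{1,\alpha}(\Gamma)$ norms in the second bound is tied to the bookkeeping of the H\"older seminorm in the $s$-direction (since $\tau$ depends on $s$) rather than to the $t$-derivative hitting $\hat\mu,\hat\xi$ themselves, but this does not affect the validity of your argument.
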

  
{\color{black}{We  will further recast  the outer problem to give it a more convenient form. We have
\[
\begin{aligned}
\bar \rho q_{ii}&=-\bar\rho f_{u}(\mU_i,x)+\mathcal O(e^{\,-c|\ln\eps|^{2}})\\
&\qquad =-f_{u}(w_i^0,x)+(1-\bar\rho)f_{u}(w_i^0,x)-\bar\rho\( f_{u}(\mU_i,x)-f_{u}(w_i^0,x)\)+
\mathcal O(e^{\,-c|\ln\eps|^{2}})\\
\bar\rho q_{ij}&=\mathcal O(e^{\,-c|\ln\eps|^{2}}), \qquad i\neq j.
\end{aligned}
\]
We note that 
\[
f_{u}(w_i^0,x)= f_{u}(w_i^0,x)\mathbbm{1}_{\Omega_{i}}.
\]
Thus, we have
\[
\bar\rho \bs q= 
\left(\begin{array}{cc} -f_{u}(w_1^0,x)\mathbbm{1}_{\Omega_{1}}& 0\\
0 &-f_{u}(w_2^0,x)\mathbbm{1}_{\Omega_{2}}
\end{array}
\right)
+\eps  \bs{q}^{1},
\]
where $ \bs{q}^{1}$ is a $2\times 2$ matrix whose entries are H\"older continuous  functions. 

To solve the nonlinear outer problem we will first solve the following linear system with $i=1,2$:
\begin{equation}
\label{eq:outer lin}
\begin{aligned}
-\Delta \psi_i-f_{u}(w^0_i,x) \mathbbm{1}_{\Omega_{i}} \psi_i+\eps \left(\bs{q}^{1}\cdot\bs{\psi}\right)_i&= (1-\chi) h_i,\quad\mbox{in}\quad \Omega\setminus \Gamma,\\
 \psi_i&=0, \quad \hbox{on}\quad  \partial\Omega. \\
\end{aligned}
\end{equation}  
We assume that the right hand sides in (\ref{eq:outer lin}) are H\"older continuous. We look for a solution in the form 
\begin{equation}
\label{decomp:psi}
\bs{\psi}=(\psi_1, \psi_2)=(\mathbbm{1}_{\Omega_{1}}, \mathbbm{1}_{\Omega_{2}})\bar \psi+(\mathbbm{1}_{\Omega_{2}}\varphi_1,\mathbbm{1}_{\Omega_{1}}\varphi_2)
\end{equation}
where $\bar \psi\in C^{2,\alpha}$ and $\varphi_1\in C^{2,\alpha}(\Omega_2)$,  $\varphi_2\in C^{2,\alpha}(\Omega_1)$ are determined from the following system
\begin{equation}
\label{eq:outer lin psi}
\begin{aligned}
-\Delta \bar \psi-&\left\{\left[f_{u}(w^0_1,x)+ \eps q^1_{11}\right]\mathbbm{1}_{\Omega_{1}} +\left[f_{u}(w^0_2,x) + \eps q^1_{22}\right]\mathbbm{1}_{\Omega_{2}}\right\}\bar \psi +\eps\left[q^1_{12} \mathbbm{1}_{\Omega_{1}}\varphi_2+q^1_{21} \mathbbm{1}_{\Omega_{2}}\varphi_1\right] \\
&\qquad\qquad\qquad\qquad= (1-\chi)\left(h_1\mathbbm{1}_{\Omega_{1}}+h_2\mathbbm{1}_{\Omega_{2}}\right),\quad\mbox{in}\quad \Omega,\\
& \bar \psi=0, \quad \hbox{on}\quad  \partial\Omega. \\
\end{aligned}
\end{equation}  
and 
\begin{equation}
\label{eq:varphi_12}
\begin{aligned}
-\Delta\varphi_1+\eps q^{1}_{11} \mathbbm{1}_{\Omega_2} \varphi_1+\eps q^1_{12}\mathbbm{1}_{\Omega_2}\bar\psi&=(1-\chi)\mathbbm{1}_{\Omega_2} h_1, \quad\mbox{in}\ \Omega_2\\ 
-\Delta\varphi_2+\eps q^{1}_{22} \mathbbm{1}_{\Omega_1}\varphi_2+\eps q^1_{21}\mathbbm{1}_{\Omega_1}\bar\psi&=(1-\chi)\mathbbm{1}_{\Omega_1} h_2, \quad\mbox{in}\ \Omega_1\\
\varphi_1&=0=\varphi_2, \quad \mbox{on}\ \Gamma\\
\varphi_1&=0=\varphi_2, \quad \mbox{on}\ \partial\Omega.
\end{aligned}
\end{equation}
The last boundary condition could be vacuous for one of the functions $\varphi_i$. 
\begin{lemma}\label{lem out 1}
Assuming that  $(1-\chi) h_i\in C^{\alpha}(\Omega)$, $i=1,2$,  for each sufficiently small $\eps>0$ the system (\ref{eq:outer lin psi})--(\ref{eq:varphi_12}) has a unique solution $\bar \psi\in  C^{2,\alpha}(\Omega)$ and $\varphi_1\in C^{2,\alpha}(\Omega_2)$,  $\varphi_2\in C^{2,\alpha}(\Omega_1)$ such that 
\begin{equation}
\label{est:lin outer}
\|\bar\psi\|_{C^{2,\alpha}(\Omega)}+\|\varphi_1\|_{C^{2,\alpha}(\Omega_2)} +\|\varphi_2\|_{C^{2,\alpha}(\Omega_1)} \lesssim \|(1-\chi) h_1\|_{C^\alpha(\Omega)}+\|(1-\chi) h_2\|_{C^\alpha(\Omega)}.
\end{equation} 
Moreover the function $\bs{\psi}$ defined in (\ref{decomp:psi}) is a solution of (\ref{eq:outer lin}).
\end{lemma}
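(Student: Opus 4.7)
The plan is to treat the $\eps=0$ problem first, where the system decouples completely, and then incorporate the $\mathcal O(\eps)$ coupling via a contraction/perturbation argument.

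At $\eps=0$ the $\bar\psi$ equation reads
\[
-\Delta\bar\psi-\left[f_u(w_1^0,x)\mathbbm{1}_{\Omega_1}+f_u(w_2^0,x)\mathbbm{1}_{\Omega_2}\right]\bar\psi=(1-\chi)\left(h_1\mathbbm{1}_{\Omega_1}+h_2\mathbbm{1}_{\Omega_2}\right)\quad\text{in }\Omega,\qquad \bar\psi=0\text{ on }\partial\Omega,
\]
which is precisely the linearized problem in the statement of Definition \ref{def nondegener}(b). Since the coefficient admits a Lipschitz extension to $\Omega$ (as noted in the discussion following Theorem \ref{thm:main}) and the homogeneous version has only the trivial solution by hypothesis, Fredholm theory together with Schauder estimates yields a unique $\bar\psi\in C^{2,\alpha}(\Omega)$ with norm controlled by the $C^\alpha$ norm of the right-hand side. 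The $\varphi_i$ equations at $\eps=0$ reduce to $-\Delta\varphi_1=(1-\chi)\mathbbm{1}_{\Omega_2}h_1$ in $\Omega_2$ and $-\Delta\varphi_2=(1-\chi)\mathbbm{1}_{\Omega_1}h_2$ in $\Omega_1$, with zero boundary data on $\Gamma\cup(\partial\Omega\cap\partial\Omega_i)$; since $\Gamma$ is regular these are solved uniquely in $C^{2,\alpha}$ by standard elliptic theory. Denote by $T_0$ the resulting bounded solution operator.

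For $\eps>0$ small, view the coupling through $\bs q^1$ as a compact perturbation. More concretely, rewrite (\ref{eq:outer lin psi})--(\ref{eq:varphi_12}) as the fixed point equation
\[
(\bar\psi,\varphi_1,\varphi_2)=T_0\Bigl((1-\chi)h_1,(1-\chi)h_2\Bigr)-\eps\,T_0 K(\bar\psi,\varphi_1,\varphi_2),
\]
where $K$ is the bounded linear operator encoding the $\eps$-terms ($q^1_{ij}$ multiplied by the relevant component). Because $T_0K$ is bounded from $C^{2,\alpha}(\Omega)\times C^{2,\alpha}(\Omega_2)\times C^{2,\alpha}(\Omega_1)$ into itself (its mapping properties follow from Schauder estimates and the Hölder regularity of the entries of $\bs q^1$), the map on the right is an $\eps$-contraction once $\eps$ is small enough, which produces a unique solution satisfying (\ref{est:lin outer}).

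It remains to verify that $\bs\psi$ defined by (\ref{decomp:psi}) solves (\ref{eq:outer lin}). This is a direct algebraic check on $\Omega\setminus\Gamma$: on $\Omega_1$ one has $\psi_1=\bar\psi$ and $\psi_2=\varphi_2$, and the first component of (\ref{eq:outer lin}) restricted to $\Omega_1$ coincides with the restriction of (\ref{eq:outer lin psi}) to $\Omega_1$, while the second component reduces to the $\varphi_2$ equation in (\ref{eq:varphi_12}) because $f_u(w_2^0,x)\mathbbm{1}_{\Omega_2}$ vanishes on $\Omega_1$. The symmetric verification on $\Omega_2$ is identical.

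The main technical point is justifying the Schauder-type estimate for the $\bar\psi$-problem at $\eps=0$: the potential $f_u(w_1^0,x)\mathbbm{1}_{\Omega_1}+f_u(w_2^0,x)\mathbbm{1}_{\Omega_2}$ jumps in its normal derivative across $\Gamma$, so the bound up to $C^{2,\alpha}$ regularity hinges on the fact that the potential itself is Lipschitz continuous in $\Omega$ (a consequence of $f$ being odd and of $w$ vanishing simply across $\Gamma$). Once this input is secured, the remainder of the proof is a routine perturbation argument.
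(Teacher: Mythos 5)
Your proof is correct and follows essentially the same route as the paper: the non-degeneracy condition (b) of Definition \ref{def nondegener} handles the $\bar\psi$-equation, the $\varphi_i$-equations are treated as (perturbed) Poisson problems in $\Omega_{3-i}$, and the $\mathcal O(\eps)$ coupling through $\bs q^1$ is absorbed by a fixed point argument, with the final claim about $\bs\psi$ checked directly on $\Omega\setminus\Gamma$. The only cosmetic difference is that you contract around the fully decoupled $\eps=0$ solution operator $T_0$, whereas the paper alternates between the two subsystems (keeping the diagonal $\eps q^1_{ii}$ terms inside the operators and feeding $\varphi_i$, respectively $\bar\psi$, as data), which yields the same estimate (\ref{est:lin outer}).
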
 
\begin{proof}
By the hypothesis of non degeneracy (\ref{def-non-de}) given H\"older continuous functions $\varphi_i$ we can solve (\ref{eq:outer lin psi}) uniquely. Standard elliptic theory and non degeneracy imply  the following estimate:
\begin{equation}
\label{eq:lin inner 1}
\begin{aligned}
\|\bar \psi\|_{C^{2,\alpha}(\Omega)}\leq C\left(\|(1-\chi)h_1\|_{C^{\alpha}(\Omega_1)}+\|(1-\chi)h_2\|_{C^{\alpha}(\Omega_2)}\right)+\mathcal O(\eps)\left(\|\varphi_1\|_{C^{\alpha}(\Omega_2)}+\|\varphi_2\|_{C^{\alpha}(\Omega_1)}\right).
    \end{aligned} 
  \end{equation}
Since, for a given $\bar \psi\in C^{\alpha}(\Omega)$ each of the equations in (\ref{eq:varphi_12}) is a perturbation of the Poisson equation in $\Omega_i$ similarly we get the existence of a unique solutions such that 
\begin{equation}
\label{eq:lin inner 2}
\begin{aligned}
\|\varphi_{1}\|_{C^{2,\alpha}(\Omega_2)}&\leq C\|(1-\chi) h_1\|_{C^\alpha(\Omega_2)}+\mathcal O(\eps)\|\bar\psi\|_{C^\alpha(\Omega)}\\
\|\varphi_{2}\|_{C^{2,\alpha}(\Omega_1)}&\leq C\|(1-\chi) h_2\|_{C^\alpha(\Omega_1)}+\mathcal O(\eps)\|\bar\psi\|_{C^\alpha(\Omega)}
\end{aligned}
\end{equation} 
Combining (\ref{eq:lin inner 1})--(\ref{eq:lin inner 2}) with a straightforward fixed point argument we get the existence and uniqueness together with the estimate (\ref{est:lin outer}).  Checking directly we verify the rest of the Lemma. 
\end{proof}

Our next objective is to estimate the right hand side of the outer equation in (\ref{sys:O3}). Denote by $\bs e_1=(1,0)$, $\bs e_2=(0,1)$ and 
\[
\bs h= h_1 \bs e_1 + h_2 \bs e_2\qquad 
h_i= \(\Delta \w_i+F_i(\bs{\mU},x)-f_u(\w_i,x)\chi(\mv_i-\w_i)\)+(1-\hat\chi)N_i +\left[\Delta, \wt\chi\right]\wt v_i
\]
Lemma \ref{lem out 1} suggests that we should further decompose 
\[
\bs h=\mathbbm{1}_{\Omega_1} \bs h+\mathbbm{1}_{\Omega_2} \bs h=\sum_{i,j} h_{ij} \bs e_i, \qquad h_{ij}= h_i \mathbbm{1}_{\Omega_j} .
\]
Observe that 
\[
(1-\chi)\bs h=\sum_{i,j} \chi_j h_{ij}\bs e_i.
\]
As we will see the size of the error $\chi_jh_{ij}$  will depend on whether $i=j$ or $i\neq j$.
\begin{lemma}\label{lem out 2}
The following estimates  hold for $i=1,2$:
\begin{equation}
\label{est error 3}
\begin{aligned}
\|h_{ii}\|_{C^\alpha(\Omega_i)}& \lesssim (\eps|\ln \eps|)^{5-\alpha}+e^{\,-c|\ln \eps|^2}+ \eps^{-4-\alpha}\|\bar w_{3-i}\|^2_{C^\alpha(\Omega_i)}\\
&\quad +\eps^{-4-\alpha}\left(e^{\,-2K\theta |\ln\eps|/\hat m} \|\wt{\bs v}\|_{C_\theta^\alpha(\pC)}^2+\eps^2 e^{\,-K\theta|\ln\eps|/{\wt m}} \|\wt {\bs v}\|_{C^{1,\alpha}_\theta(\pC)}\right)\\
&\quad + \eps^{-4-\alpha}\|\bar w_{i}\|_{C^\alpha(\Omega_i)}\|\bar w_{3-i}\|^2_{C^\alpha(\Omega_i)}+\eps^{-\alpha} \left(\|\bar w_i\|^2_{C^\alpha(\Omega_i)}+\|\bar w_{3-i}\|^2_{C^\alpha(\Omega_i)}\right)
\\
\|h_{i 3-i}\| _{C^\alpha(\Omega_i)}& \lesssim e^{\,-c|\ln \eps|^2} + \eps^{-4-\alpha}\left(e^{\,-2K\theta |\ln\eps|/\hat m} \|\wt{\bs v}\|_{C_\theta^\alpha(\pC)}^2+\eps^2 e^{\,-K\theta|\ln\eps|/{\wt m}} \|\wt {\bs v}\|_{C^{1,\alpha}_\theta(\pC)}\right)\\
&\qquad +\eps^{-4-\alpha} \left(\|\bar w_i\|^2_{C^\alpha(\Omega_{3-i})}+\|\bar w_{3-i}\|^2_{C^\alpha(\Omega_{3-i})}\right)\\
&\qquad + \eps^{-4-\alpha}\|\bar w_{i}\|_{C^\alpha(\Omega_{3-i})}\|\bar w_{3-i}\|^2_{C^\alpha(\Omega_{3-i})}
\end{aligned}
\end{equation}
\end{lemma}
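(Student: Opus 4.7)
The strategy is to bound separately each of the four constituents of
\[
h_i = \bigl(\Delta \w_i + F_i(\bs{\mU},x) - f_u(\w_i,x)\chi(\mv_i-\w_i)\bigr) + (1-\hat\chi) N_i + [\Delta, \wt\chi]\wt v_i,
\]
distinguishing between the restrictions to $\Omega_i$ (the ``native'' side of $\w_i$) and to $\Omega_{3-i}$ (where $\w_i\equiv 0$). On $\Omega_i$ one has the identity $U_i = \w_i + \chi(\mv_i-\w_i)$, since $\chi_{3-i}\equiv 0$ there. Taylor-expanding $f(U_i,x)$ around $\w_i$ thus produces a linear term $f_u(\w_i,x)\chi(\mv_i-\w_i)$ which is precisely cancelled by the coupling correction in the first parenthesis, so that modulo quadratic remainders the linear block reduces to $-E_i^{out} - \eps^{-4}U_iU_{3-i}^2 + \mathcal O(\chi^2(\mv_i-\w_i)^2)$.

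On $\Omega_i$ I split into the bulk $\{\chi=0\}$ and the interface strip $\{\chi>0\}$. In the bulk, $U_{3-i}=\chi\,\mv_{3-i}$ vanishes and we recover the outer error $\mathcal O(\eps^5)$ from Lemma \ref{lem:error out}. In the interface strip of width $\sim \eps|\ln\eps|$, the term $\eps^{-4}U_iU_{3-i}^2$ is absorbed by the inner equation; the residual combined with the matching estimate of Lemma \ref{lem:error out} and the H\"older rescaling on a set of width $\eta$ together produce the $(\eps|\ln\eps|)^{5-\alpha}$ contribution, while the Gaussian decay of $\mathbf V$ yields the $e^{-c|\ln\eps|^2}$ piece. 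On $\Omega_{3-i}$, $\Delta\w_i\equiv 0$ and $U_i=\chi\,\mv_i$ is pointwise Gaussian-small in the inner variable, so the linear block contributes only the $e^{-c|\ln\eps|^2}$ bound; this is precisely why the statement for $h_{i,3-i}$ lacks the $(\eps|\ln\eps|)^{5-\alpha}$ term.

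For the nonlinear block $(1-\hat\chi)N_i$, decompose $\bs N = \bs N_f - \eps^{-4} \bs M$. The $N_{f,i}$ piece, being quadratic in $\rho_i$, is handled via \eqref{est:nf}, directly yielding the $\eps^{-\alpha}(\|\bar w_i\|^2 + \|\bar w_{3-i}\|^2)$ contribution. For $M_i$ one applies \eqref{est:m11} on $\Omega_i$ and \eqref{est:m12} on $\Omega_{3-i}$; the $\eps^{-4}$ prefactor is controlled either by the quadratic smallness $\|\bar w\|^2$ (giving the $\eps^{-4-\alpha}\|\bar w_{3-i}\|^2$ and the cubic terms), by the linear-in-$\mv$ coupling producing an extra $\eps$, or by converting a $\wt\chi\wt v$-quadratic contribution to the outer norm via \eqref{transf:in-out}, which supplies the factor $e^{-2K\theta|\ln\eps|/\hat m}\|\wt v\|^2$. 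The commutator $[\Delta,\wt\chi]\wt v_i$ is treated directly by \eqref{transf:com 2}, producing the $\eps^{-2-\alpha}e^{-K\theta|\ln\eps|/\wt m}\|\wt v\|_{C^{1,\alpha}_\theta(\pC)}$ term, which matches the $\eps^{-4-\alpha}\cdot \eps^2 e^{-K\theta|\ln\eps|/\wt m}\|\wt v\|_{C^{1,\alpha}_\theta}$ form in the statement.

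The main obstacle is the careful bookkeeping of the $\eps^{-4}$ factor in the $\bs M$ piece against the supports of the cutoffs $\chi$, $\hat\chi$, $\wt\chi$, $\bar\chi$. One must choose the constants $K$, $\hat m$, $\wt m$ so that the exponentials $e^{-K\theta|\ln\eps|/\hat m}$ and $e^{-K\theta|\ln\eps|/\wt m}$ absorb the negative powers $\eps^{-4-\alpha}$ and $\eps^{-2-\alpha}$; this is compatible with the a priori sizes \eqref{hyp:size w}, \eqref{hyp:size v} and constrains the relationship between $K$, $\bar\varsigma$, $\wt\varsigma$, $\theta$, and $\alpha$. A secondary but delicate point is to track, when expanding $M_i$ on each side of $\Gamma$, which of $\bar w_i$, $\bar w_{3-i}$ is present, since both perturbations live on all of $\Omega$ through the $\bar\chi$ cutoff and both couple into $M_i$ through the cubic $\rho_i\rho_{3-i}^2$ structure.
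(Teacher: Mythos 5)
Your proposal follows essentially the same route as the paper's proof: the same splitting of $h_i$ into the linearized/outer-error block (with the Taylor cancellation of $f_u(\w_i,x)\chi\delta_i$ and Lemma \ref{lem:error out}), the nonlinear block via $\bs N=\bs N_f-\eps^{-4}\bs M$ with (\ref{est:nf}), (\ref{est:m11})--(\ref{est:m12}) and the conversion (\ref{transf:in-out}), and the commutator via (\ref{transf:com 2}), with the Gaussian decay of the opposite inner component accounting for the $e^{-c|\ln\eps|^2}$ terms and the absence of $(\eps|\ln\eps|)^{5-\alpha}$ in $h_{i,3-i}$. The only loose phrase is that $\eps^{-4}U_iU_{3-i}^2$ is ``absorbed by the inner equation'': as in the paper, the estimate really concerns $\chi_j h_{ij}$ (the form in which $h_i$ enters the outer problem), and on $\supp\chi_j$ this term is bounded directly by the Gaussian smallness of $\chi\,\mv_{3-j}$ there, which is the mechanism you correctly invoke right after.
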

\begin{proof}
We start with the estimate for $h_{ii}$. We have
\begin{equation}
\label{def:hii}
h_{ii}=\chi_i \left(\Delta \w_i+F_i(\bs{\mU},x)-f_u(\w_i,x)\chi(\mv_i-\w_i)\right)+\chi_i(1-\hat\chi) N_i +\chi_i\left[\Delta, \wt\chi\right]\wt v_i.
\end{equation}
To estimate the first term on the right hand side above we denote $\delta_i=\mv_i-\w_i$ and write
\[
\begin{aligned}
\chi_i\left( \Delta \w_i+F_i(\bs{\mU},x)-f_u(\w_i,x)\chi\delta_i\right)&=\chi_i E^{out}_i+\chi_i\left(f(\w_i+\chi\delta_i, x)-f(\w_i,x)-f_u(\w_i,x)\chi\delta_i\right)\\
&\qquad -\eps^{-4}\chi_i(\w_i+\chi \delta_i)(\w_{3-i}+\chi \delta_{3-i})^2.
\end{aligned}
\]
Let us consider the last term above. By definition of the outer approximation $\w_{3-i}\equiv 0$ in $\Omega_i$. In addition, by definition of the inner solution (\ref{inner}) and {\it a priori} assumed size of the modulation functions $\mu$ and $\xi$ we see that 
\[
\|\chi_i(\w_i+\chi \delta_i)(\w_{3-i}+\chi \delta_{3-i})^2\|_{C^\alpha(\Omega_i)}=\|\chi_i\chi^2\mv^2_{3-i}(\w_i+\chi \delta_i)\|_{C^\alpha(\Omega_i)}\lesssim \eps^{1-3\alpha} e^{\,-c|\ln \eps|^2}. 
\]
From this  and Lemma \ref{lem:error out} it follows
\[
\begin{aligned}
\|\chi_i\left( \Delta \w_i+F_i(\bs{\mU},x)-f_u(\w_i,x)\chi\delta_i\right)\|_{C^\alpha(\Omega_i)}&\lesssim \|\chi_i E^{out}_i\|_{C^\alpha(\Omega_i)}+\|\chi_i\chi\delta_i\|^2_{C^\alpha(\Omega_i)}\\
&\quad +\eps^{-4}\|\chi_i(\w_i+\chi \delta_i)(\w_{3-i}+\chi \delta_{3-i})^2\|_{C^\alpha(\Omega_i)}\\
&\lesssim (\eps|\ln\eps|)^{5-\alpha}+e^{\,-c|\ln \eps|^2}
\end{aligned}
\]
To estimate the second term on the right hand side of (\ref{def:hii}) we use the decomposition (\ref{decomp nonlinear}). From (\ref{est:nf}) we we have
\[
\begin{aligned}
\|\chi_i (1-\hat\chi)\bs N_{f,i}(U_i+\rho_i,x)\|_{C^\alpha(\Omega_i)}&\lesssim \|\chi_i(1-\hat\chi)\wt \chi^2 \wt{\bs v}\|_{C^\alpha(\Omega_i)}^2+\|\chi_i\bar w_{ii}\|_{C^\alpha(\Omega_i)}^2\\
&\lesssim \eps^{-\alpha} \left(e^{\,-K\theta |\ln\eps|/\hat m} \|\wt{\bs v}\|_{C_\theta^\alpha(\pC)}^2+\|\bar w_{i}\|_{C^\alpha(\Omega_i)}^2\right).
\end{aligned}
\]
Also from  (\ref{decomp nonlinear}),  using (\ref{est:m11})  and $|\wt\chi U_i|\lesssim \eps|\ln\eps|$ we find 
\[
\begin{aligned}
\eps^{-4}\|\chi_i (1-\hat\chi)\bs M_i(\bs U+\bs \rho)\|_{C^\alpha(\Omega_i)}&\lesssim \eps^{-4-\alpha} \left(e^{\,-K\theta |\ln\eps|/\hat m}\eps|\ln\eps| \|\wt{\bs v}\|_{C_\theta^\alpha(\pC_\eps)}^2
+\|\bar w_{3-i}\|^2_{C^\alpha(\Omega_i)}\right)\\
&+\eps^{-4-\alpha} e^{\,-c|\ln\eps|^2} \left(\|\bar w_{1}\|^2_{C^\alpha(\Omega_i)}+\|\bar w_{2}\|^2_{C^\alpha(\Omega_i)}\right)\\
&+\eps^{-4-\alpha}\|\bar w_{i}\|_{C^\alpha(\Omega_i)}\|\bar w_{3-i}\|^2_{C^\alpha(\Omega_i)}.
\end{aligned}
\]
Finally, the commutator term is estimated using (\ref{transf:com 2}) by
\begin{equation}
\label{est:comm_out ii}
\left\|\commutator{\Delta}{\wt \chi}{\wt v_i}\right\|_{C^\alpha(\Omega_i)}\lesssim \eps^{-2-\alpha} e^{\,-K\theta|\ln\eps|/{\wt m}} \|\wt {\bs v}\|_{C^{1,\alpha}_\theta(\pC)}
\end{equation}
Summarizing these estimates (and taking $\eps$ small) we get the first bound in (\ref{est error 3}). 

To show the second bound in (\ref{est error 3}) we write for $i=1,2$: 
\begin{equation}
\label{def:hi3-i}
h_{i3-i}=\chi_{3-i} \left(\Delta \w_{i}+F_{i}(\bs{\mU},x)-f_u(\w_i,x)\chi\delta_i\right)+\chi_{3-i} (1-\hat\chi) N_{i} +\chi_{3-i}\left[\Delta, \wt\chi\right]\wt v_{i}.
\end{equation}
In the $\supp\chi_{3-i}$ we have $\w_{i}\equiv 0$, and $\mv_{i}=\mathcal O(e^{\,-c|\ln\eps|^2})$ by construction hence
\[
\|\chi_{3-i} \left(\Delta \w_{i}+F_{i}(\bs{\mU},x)-f_u(\w_i,x)\chi\delta_i\right)\|_{C^\alpha(\Omega_{3-i})}\lesssim  e^{\,-c|\ln\eps|^2}.
\]
Similarly as above
\[
\|\chi_{3-i} (1-\hat\chi)\bs N_{f,i}(U_i+\rho_i,x)\|_{C^\alpha(\Omega_{3-i})}\lesssim \eps^{-2\alpha} \left(e^{\,-2K\theta |\ln\eps|/\hat m} \|\wt{\bs v}\|_{C_\theta^\alpha(\pC)}^2+\|\bar w_{i3-i}\|_{C^\alpha(\Omega_{3-i})}^2\right)
\]
From (\ref{est:m12}) we get
\[
\begin{aligned}
\eps^{-4}\|\chi_{3-i}(1-\hat\chi)\bs M_i(\bs U+\bs \rho)\|_{C^\alpha(\Omega_{3-i})}&\lesssim \eps^{-4-\alpha}e^{\,-2K\theta |\ln\eps|/\hat m}\eps|\ln\eps| \|\wt{\bs v}\|_{C_\theta^\alpha(\pC)}^2\\
&+\eps^{-4-\alpha} \left(\|\bar w_{1}\|^2_{C^\alpha(\Omega_{3-i})}+\|\bar w_{2}\|^2_{C^\alpha(\Omega_{3-i}i)}\right)\\
&+\eps^{-4-\alpha}\|\bar w_{i}\|_{C^\alpha(\Omega_{3-i})}\|\bar w_{3-i}\|^2_{C^\alpha(\Omega_{3-i})}.
\end{aligned}
\]
The commutator term is estimated in a similar was as in the estimate for $h_{ii}$. Combing the above bounds we conclude the proof.
\end{proof}

As a byproduct of the above we have
 \begin{corollary}\label{cor:lipschitz 2}
The maps
\[
(\bs{\rho}, \mu, \xi)\longmapsto h_{ij}(\cdot;\bs{\rho}, \mu, \xi)
\]
are Lipschitz in the following sense
\begin{equation}
\label{est:lipschitz 2}
\begin{aligned}
&\|h_{ij}(\cdot;\bs{\rho}, \mu, \xi)-h_{ij}(\cdot;\bs{\rho}+\delta\bs\rho, \mu+\delta\mu, \xi+\delta\xi)\|_{C^\alpha(\Omega_j)} \\
&\qquad \lesssim \eps^{-\alpha} \blu{e^{\,-K\hat\lambda|\ln\eps|}}\left(\|\delta\mu\|_{C^{1,\alpha}(\Gamma)}+\|\delta\xi\|_{C^{1,\alpha}(\Gamma)}\right) \\
& \qquad\qquad +\eps^{\wt\varsigma}\left(\eps^{-\alpha} e^{\,-K\theta|\ln\eps|/{\wt m}} +\eps^{-2-\alpha}e^{\,-2K\theta |\ln\eps|/\hat m}\right) \|\delta\wt{\bs v}\|_{C_\theta^\alpha(\pC)}\\
 &\qquad \qquad +\eps^{\bar\varsigma-\alpha}\sum_{i=1,2}\left(\|\delta\bar{w}_{i}\|_{C^{\alpha}(\Omega_i)}+\|\delta\bar{w}_{3-i}\|_{C^{\alpha}(\Omega_{3-i})}\right),
\end{aligned}
\end{equation}
where, respectively,  $\bar\varsigma, \wt\varsigma>0$ are  the constants in (\ref{hyp:size w}) and (\ref{hyp:size v}). 
  \end{corollary}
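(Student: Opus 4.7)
The plan is a bookkeeping exercise in which every estimate performed in Lemma \ref{lem out 2} is repeated with norms of quantities replaced by norms of their differences. The starting point is the identity
\[
h_{ij} = \chi_j\bigl(\Delta \w_i + F_i(\bs{\mU}, x) - f_u(\w_i, x)\chi\delta_i\bigr) + \chi_j(1-\hat\chi) N_i + \chi_j \commutator{\Delta}{\wt\chi}\wt v_i,
\]
where $\delta_i = \mv_i - \w_i$ and $\bs{\mU} = \chi\bs{\mv}(\cdot; \mu, \xi) + (\chi_1\w_1, \chi_2\w_2)$. We will split the difference $h_{ij}(\cdot; \bs\rho, \mu, \xi) - h_{ij}(\cdot; \bs\rho + \delta\bs\rho, \mu+\delta\mu, \xi+\delta\xi)$ into three contributions, one from each summand, and estimate each separately.

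For the first summand the contribution is independent of $\bs\rho$ and depends on $(\mu,\xi)$ only through $\chi\bs{\mv}$. A first-order Taylor expansion, exactly as in the proof of Lemma \ref{lem out 2}, reduces the variation to a bound of the form $\|\delta(\chi\bs{\mv})\|_{C^\alpha(\supp\chi_j)}$ times a smooth profile controlled on the annulus where $\chi_j$ is supported. Corollary \ref{cor: lipschitz}, applied with $C$ chosen according to the support of $\chi_j$ so that $C^{-1} \sim K$, then delivers the first line on the right of (\ref{est:lipschitz 2}).

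For the nonlinear summand $\chi_j(1-\hat\chi)N_i = \chi_j(1-\hat\chi)(\bs N_f - \eps^{-4}\bs M)_i$, which is quadratic in $\bs\rho = \wt\chi\wt{\bs v} + \bar\chi\bar{\bs w}$ and smooth in $(\mu,\xi)$, we will use the elementary mean value inequality
\[
|\bs N(\bs{\mU}_1 + \bs\rho_1) - \bs N(\bs{\mU}_2 + \bs\rho_2)| \lesssim \bigl(|\bs\rho_1| + |\bs\rho_2| + |\delta\bs{\mU}|\bigr)\bigl(|\delta\bs\rho| + |\delta\bs{\mU}|\bigr).
\]
One of the two factors on the right supplies the a priori smallness $\eps^{\wt\varsigma}$ or $\eps^{\bar\varsigma}$ by (\ref{hyp:size w})--(\ref{hyp:size v}), while the other is the desired difference. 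The transfer estimates (\ref{transf:in-out})--(\ref{transf:com 2}) then reproduce precisely the exponential weights $e^{-K\theta|\ln\eps|/\wt m}$, $e^{-2K\theta|\ln\eps|/\hat m}$, and $\blu{e^{-K\hat\lambda|\ln\eps|}}$ appearing in (\ref{est:lipschitz 2}). The commutator term $\chi_j \commutator{\Delta}{\wt\chi}\wt v_i$ is already linear in $\wt v_i$, so its variation is controlled by applying (\ref{transf:com 2}) to $\delta\wt{\bs v}$ and is absorbed into the coefficient of $\|\delta\wt{\bs v}\|_{C^\alpha_\theta(\pC)}$.

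The only delicate point, and essentially the sole reason the proof is not entirely mechanical, is the handling of the quadratic cross-products arising from $\bs M(\bs{\mU}+\bs\rho)$: on $\supp\chi_j$, products of an inner factor $\wt\chi\wt v$ by an outer factor $\bar\chi\bar w$ must be localized so that one of the two cutoffs supplies the exponential decay matching the target exponents, and the inner--outer norm transfer via (\ref{transf:in-out}) must be applied to the factor carrying the exponential weight rather than to the factor carrying the a priori smallness. This is the same geometric bookkeeping already performed in the proof of Lemma \ref{lem out 2}, so once the localization argument is rerun with differences in place of quantities, the sum of the three contributions yields (\ref{est:lipschitz 2}) and completes the proof.
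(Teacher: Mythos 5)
Your proposal is correct and follows essentially the same route as the paper, which states this corollary without a separate proof precisely because it is obtained by rerunning the estimates of Lemma \ref{lem out 2} with differences in place of the quantities themselves: the $(\mu,\xi)$-dependence is handled through the Lipschitz property of $\mv(\cdot;\mu,\xi)$ on the support of $\chi_j$ (Corollary \ref{cor: lipschitz}, where the $\sech(\hat\lambda\tau)$ profile of the modulation functions supplies the factor $e^{-K\hat\lambda|\ln\eps|}$ since $|\tau|\gtrsim K|\ln\eps|$ there), the quadratic terms are linearized by the mean value inequality with one factor absorbed by the a priori bounds (\ref{hyp:size w})--(\ref{hyp:size v}) to produce $\eps^{\bar\varsigma}$, $\eps^{\wt\varsigma}$, and the commutator term is linear in $\wt{\bs v}$ so (\ref{transf:com 2}) applies directly to $\delta\wt{\bs v}$. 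This is exactly the bookkeeping the authors intend, so no further comment is needed.
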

  }}

{\color{black}{ 
 \subsection{The error of the  inner approximation}\label{sec. 4.5}
 We denote the right hand side of the first equation in (\ref{sys:O3}) by ${\bs g}=(g_1, g_2)$:
 \begin{equation}
 \label{rhs 4.10}
 g_i=- \wt\rho(\bs q_i-\bs q_i^0)\cdot  \wt{\bs v}+ \chi \(\Delta \mv_i+F_i(\bs {\mU},x)+f_u(\w_i,x)(1-\chi)\delta_i\)+\hat\chi N_i+\commutator{\Delta}{\chi}\delta_i +\commutator{\Delta}{\bar\chi}\bar w_i.
  \end{equation}
 The function ${\bs g}$  represents the  inner error  we will calculate now. 
  
 Below we will will use the same symbol for  functions of the local, inner variables $(t,s)$ and the stretched variables $(\tau, s)$ defined in (\ref{var: stretched}).  
Using (\ref{lap1}) it is straightforward to find the expression of the Laplacian in terms of $(\tau, s)$. We denote 
\[
\A_\eps(\tau, s)=\left(1-\eps\kappa(s)(\tau b^{-1}(s)+\zeta(s))\right)^2,
\]
and 
\[
\dot b(s)=\frac{d}{ds} b(s), \quad \ddot b(s)= \frac{d^2}{ds^2} b(s),\quad  \dot\zeta(s)=\frac{d}{ds} \zeta(s), \quad \dot\kappa(s)=\frac{d}{ds} \kappa(s),
\]
with similar rule for other functions of $s$. 
With this notation 
\begin{equation}
\label{exp:laplace inner}
\begin{aligned}
\Delta&= \eps^{-2}\(b^2\partial_{\tau\tau}+\eps^2\A_\eps^{-1}\partial_{ss}\)-\eps^{-1}\kappa b \A_\eps^{-1/2}\partial_\tau+a_{11}\partial_{\tau\tau}+a_{12}\partial_{\tau s}+c_{1}\partial_\tau+\eps c_2\partial_s,\\
\end{aligned}
\end{equation}
where
\[
\begin{aligned}
a_{11}&=\A_\eps^{-1}(\dot b b^{-1}\tau-b\dot \zeta)^2, \qquad a_{12}=2\A_\eps^{-1}(\dot bb^{-1}\tau-b\dot\zeta),\\
c_1&=\left(\eps\A_\eps^{-3/2}\dot\kappa(\tau b^{-1} +\zeta)(\dot b b^{-1} -b\dot \zeta)+\A^{-1}_\eps(\ddot b b^{-1} \tau-2\dot b\dot \zeta-b\ddot \zeta)\right), \qquad c_2=\A_\eps^{-3/2} \dot \kappa(b^{-1}\tau+\zeta).
\end{aligned}
\]
For later purpose we set
\[
\begin{aligned}
\mathcal D^0_\eps&=\eps^{-2}\(b^2\partial_{\tau\tau}+\eps^2 \A_\eps^{-1}\partial_{ss}\)-\eps^{-1}\kappa b \A_\eps^{-1/2}\partial_\tau,\\ \
\mathcal D^1_\eps&=a_{11}\partial_{\tau\tau}+a_{12}\partial_{\tau s}+c_{1}\partial_\tau+\eps c_2\partial_s.
\end{aligned}
\]
Our first task is to calculate 
\[
\bs {E}^{in}(\bs\mv)= \left(\begin{array}{c} \Delta\mv_1-\eps^{-4} \mv_1\mv_2^2 \\
\Delta\mv_2-\eps^{-4} \mv_1^2\mv_2\end{array}\right), 
\]
in the neighborhood  $\mathcal U_\chi=\{\chi\neq 0\}$, the interior of the $\supp \chi$, around $\Gamma$.    
Recall that the approximate solution $\bs \mv$ depends on the modulation functions $\mu$ and $\xi$. For notational convenience we set
\[
m(\tau, s)= b(s)(1+\mu(\tau, s)), \qquad k(\tau, s)=\tau\mu(\tau, s)+\xi(\tau, s),
\]
so that 
\[
\bs \mv(\tau, s)=\eps m \mathbf V(\tau+k)+\eps^2\kappa(s) \mathbf W(\tau+k).
\]
We will denote
\[
\begin{aligned}
&\mbV'(\tau+k)=\left.\frac{d}{dx} \mbV(x)\right|_{x=\tau+k}, \quad \mbV''(\tau+k)=\left.\frac{d^2}{dx^2} \mbV(x)\right|_{x=\tau+k}, \\
&\mbW'(\tau+k)=\left.\frac{d}{dx} \mbW(x)\right|_{x=\tau+k}, \quad \mbW''(\tau+k)=\left.\frac{d^2}{dx^2} \mbW(x)\right|_{x=\tau+k}.
\end{aligned}
\]
We calculate 
\[
\begin{aligned}
\mathcal D^0_\eps (\eps m\mbV)&=\eps^{-1}\(b^2\partial_{\tau\tau}+\eps^2 \A_\eps^{-1}\partial_{ss}\)(m\mbV)-\kappa b \A_\eps^{-1/2}\partial_\tau (m\mbV)
\\ &=\eps^{-1} b^2 m\mbV''-\kappa b m \mbV' +\mathbf \Omega +\bs{\Theta}_1
\end{aligned}
\]
where 
\[
\begin{aligned}
\mathbf\Omega&=\eps^{-1}\mbV\left(b^2\partial_{\tau\tau} m+\eps^2\A_\eps^{-1} \partial_{ss} m\right)+\eps^{-1}\mbV'\left(b^2(2\partial_\tau m+m\partial_{\tau\tau} k)
+\eps^2\A_\eps^{-1}m\partial_{ss} k\right)\\
&\qquad+2\eps^{-1}m b^2\mbV''\partial_\tau k,\\
\bs{\Theta}_1&=\eps^{-1}\left(2b^2\partial_\tau m\partial_\tau k+2\eps^2\A_\eps^{-1} \partial_s m\partial_s k\right)\mbV'+\eps^{-1}m\left(b^2(\partial_\tau k)^2+\eps^2\A_\eps^{-1}(\partial_s k)^2\right)\mbV''\\
&\qquad -\kappa b \A_\eps^{-1/2}\partial_\tau m\mbV-\eps\kappa^2 b (\tau b^{-1}+\zeta)\A_\eps^{-1/2} m \mbV'.
\end{aligned}
\]
Similarly we have
\[
\eps^2\mathcal D^0_\eps(\kappa\mbW)=\(b^2\partial_{\tau\tau}+\eps^2\A_\eps^{-1}\partial_{ss}\)(\kappa\mbW)-\eps\kappa b \A_\eps^{-1/2}\partial_\tau (\kappa\mbW)= b m\kappa \mbW''+\bs{\Theta}_2
\]
where
\[
\bs{\Theta}_2=b \kappa \left(2b\partial_\tau k+b(\partial_\tau k)^2-\mu\kappa\right)\mbW''+b^2 \kappa\partial_{\tau\tau}k \mbW'+\eps^2 \A_\eps^{-1}\partial_{ss}(\kappa\mbW)-\eps\kappa b \A_\eps^{-1/2}\partial_\tau (\kappa\mbW).
\]
{\color{black}{Note that the terms involved in $\mathcal D^1_\eps$ are similar to those of $\mathcal D^0_\eps$ but multiplied additionally  by $\eps$.
}}

The components of the  nonlinear term in $\bs E^{in}$ are, with $i=1,2$:
\[
\begin{aligned}
-\eps^{-4} \mv_i\mv^2_{3-i}&=-\eps^{-1}(m V_i+\eps\kappa W_i)(m V_{3-i}+\eps\kappa W_{3-i})^2\\
&=-\eps^{-1} b^2m V_iV_{3-i}^2-\blu{\kappa bm} (V_{3-i}^2 W_i+2 V_{3-i} V_i W_{3-i})+\Theta_{3i}
\end{aligned}
\]
where $\mathbf {\Theta}_3=(\Theta_{31}, \Theta_{32})$ is
\[
\begin{aligned}
\Theta_{3i}&=-\eps^{-1}m\blu{(m^2-b^2)}\mu V_i V_{3-i}^2-\kappa m\mu \left(V_{3-i}^2 W_i-2 V_{3-i}V_i W_{3-i}\right)\\
&\qquad - \eps m\kappa^2\left(V_i W_{3-i}^2+2 V_{3-i} W_i W_{3-i}\right)-\eps^2 \kappa^3 W_i W_{3-i}^2.
\end{aligned}
\]
We will denote 
\[
\mathbf{\Theta}_4=\mathcal D^1_\eps(\eps m \mbV+\eps^2 \kappa \mbW)= \left(a_{11}\partial_{\tau\tau}+a_{12}\partial_{\tau s}+c_{1}\partial_\tau+\eps c_2\partial_s\right)(\eps m \mbV+\eps^2 \kappa \mbW). 
\]
Adding the above identities and using the definition of $\mbW$ we get
\[
\bs E^{in}=\mathbf \Omega+\sum_{j=1}^4\mathbf{\Theta}_j.
\]
We will estimate the terms on the right hand side of the above identity and also study its Lipschitz character as functions of the modulations functions $\mu, \xi$ in the weighted H\"older space $C^{\alpha}_\theta(\pC)$. Before doing this we will derive a more explicit formula for $\mathbf\Omega=\mathbf \Omega_0+\mathbf\Omega_1$, where  
\begin{equation}
\label{def omega zero}
\begin{aligned}
\mathbf\Omega_0&=\eps^{-1}b(\mbV+\tau\mbV') (b^2\partial_{\tau\tau}\mu+\eps^2\A_\eps^{-1}\partial_{ss} \mu)+\eps^{-1}b \mbV'(b^2\partial_{\tau\tau}\xi+\eps^2\A_\eps^{-1}\partial_{ss}\xi)
\\
&\quad +4\eps^{-1}b^3\mbV'\partial_\tau\mu+2\eps^{-1}b^3\mbV''(\mu+\tau\partial_\tau\mu+\partial_\tau \xi),
\\
\mathbf\Omega_1&=\mbV \A_\eps^{-1}(\eps \ddot bb^{-1}m+2\eps\dot b\partial_s\mu)+\eps^{-1}\mbV'b^2\mu\partial_{\tau\tau}k+\eps \A_\eps^{-1} \mbV' \mu\partial_{ss}k+2\eps^{-1}b^2\mu\mbV''\partial_\tau k.
\end{aligned}
\end{equation}
The operator  $(\mu, \xi)\mapsto \mathbf\Omega_0$ will play an important role in the derivation of the modulation equation later on. We recall that $\chi=\chi(t/\eta)$ with $\eta = K\eps|\ln\eps|$. Using
\[
t=\eps\left(\tau b(s)+\zeta(s)\right)
\]
  below we will consider the cut off function $\chi$ as a function of $(\tau, s)$. Since the functions $b$ and $\zeta$ have already been determined and they are at least of class $C^{3,\alpha}(0,|\Gamma|)$ we will allow the constants in the estimates below to depend on them.  With this said the proof of the following Lemma is straightforward.

\begin{lemma}\label{lem:est inner error}
The following estimates hold
\[
\begin{aligned}
\|\chi \bs \Omega_0\|_{C^\alpha_\theta(\pC)}&\lesssim \eps^{-1}\left(\|\mu\|_{C^{2,\alpha}(\Gamma)}+\|\xi\|_{C^{2,\alpha}(\Gamma)}\right),\\
\|\chi \bs \Omega_1\|_{C^\alpha_\theta(\pC)}&\lesssim \eps e^{\,2 K\theta|\ln\eps|}|\ln\eps| +\|\mu\|_{C^{1,\alpha}(\Gamma)} + \eps^{-1}\left(\|\mu\|^2_{C^{2,\alpha}(\Gamma)}+\|\xi\|^2_{C^{2,\alpha}(\Gamma)}\right),
\end{aligned}
\]
and
\[
\begin{aligned}
\|\chi\mathbf\Theta_1\|_{C^\alpha_\theta(\pC)}&\lesssim \eps e^{\,2 K\theta|\ln\eps|}|\ln\eps|+\left(\|\mu\|_{C^{2,\alpha}(\Gamma)}+ \|\xi\|_{C^{2,\alpha}(\Gamma)}\right)+ \eps^{-1}\left(\|\mu\|^2_{C^{2,\alpha}(\Gamma)}+\|\xi\|^2_{C^{2,\alpha}(\Gamma)}\right)
\\
\|\chi\mathbf\Theta_2\|_{C^\alpha_\theta(\pC)}&\lesssim \eps e^{\,2K\theta|\ln\eps|}|\ln\eps|+\left(\|\mu\|_{C^{2,\alpha}(\Gamma)}+ \|\xi\|_{C^{2,\alpha}(\Gamma)}\right),
\\
\|\chi\mathbf\Theta_3\|_{C^\alpha_\theta(\pC)}&\lesssim\eps e^{\,2 K\theta|\ln\eps|}|\ln\eps|^5+\eps^{-1}\|\mu\|^{\blu{2}}_{C^{0,\alpha}(\Gamma)},
\\
\|\chi\mathbf\Theta_4\|_{C^\alpha_\theta(\pC)}&\lesssim \eps e^{\,2 K\theta|\ln\eps|}|\ln\eps| +\eps \left(\|\mu\|_{C^{2,\alpha}(\Gamma)}+ \|\xi\|_{C^{2,\alpha}(\Gamma)}\right).
\end{aligned}
\]
In addition the function $(\mu, \xi)\mapsto \chi \bs E^{in}$ is Lipschitz and satisfies:
\[
\|\chi \bs E^{in}(\cdot; \mu,\xi)-\chi \bs E^{in}(\cdot; \mu+\delta\mu,\xi+\delta \xi)\|_{C^\alpha_\theta(\pC)}\lesssim \eps^{-1}  \|\delta\mu\|_{C^{0,\alpha}(\Gamma)}+\left(\|\delta\mu\|_{C^{2,\alpha}(\Gamma)}+ \|\delta\xi\|_{C^{2,\alpha}(\Gamma)}\right).
\]
\end{lemma}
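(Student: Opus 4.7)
The plan is to bound each summand in the decomposition $\bs E^{in} = \mathbf\Omega_0 + \mathbf\Omega_1 + \sum_{j=1}^4 \mathbf\Theta_j$ separately by substituting the explicit formulas and invoking three elementary ingredients. The first is geometric: on $\supp\chi$ we have $|t|\le 2\eta = 2K\eps|\ln\eps|$, so in the stretched variable $\tau=\eps^{-1}b(s)(t-\eps\zeta(s))$ one has $|\tau|\lesssim|\ln\eps|$, hence the weight satisfies $\chi(\tau,s)\cosh(\theta\tau)\lesssim e^{2K\theta|\ln\eps|}$ on the support. The second is profile asymptotics: by \eqref{v}--\eqref{z2} the function $\mbV(\tau+k)$ grows at most linearly in $|\tau|$, $\mbV'(\tau+k)$ is uniformly bounded and $\mbV''(\tau+k)$ decays like $e^{-c\tau^2}$; by Lemma \ref{lema w}, $\mbW(\tau+k)$ grows at most quadratically, $\mbW'(\tau+k)$ linearly, and $\mbW''(\tau+k)$ is uniformly bounded. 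The third is the structural form \eqref{hyp:expl mod}--\eqref{hyp: phi}: $\mu=\hat\mu(s)\phi_\mu(\tau)$ and $\xi=\hat\xi(s)\phi_\xi(\tau)$ with $|\phi_\mu|+|\phi_\xi|\lesssim\sech(\hat\lambda\tau)$ and $\theta<\hat\lambda$, so every $\tau$- or $s$-derivative of $\mu,\xi$ inherits the decay $\sech((\hat\lambda-\theta)\tau)$ relative to the weight, and its $C^{k,\alpha}_\theta(\pC)$ norm is controlled by $\|\mu\|_{C^{k,\alpha}(\Gamma)}+\|\xi\|_{C^{k,\alpha}(\Gamma)}$.

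With these ingredients in hand the six estimates reduce to term-by-term bookkeeping. In \eqref{def omega zero} each summand of $\mathbf\Omega_0$ is linear in $(\mu,\xi)$ with prefactor $\eps^{-1}$, and the potentially unbounded factor $(\mbV+\tau\mbV')$ always appears paired with $\partial_{\tau\tau}\mu$, $\partial_{\tau\tau}\xi$ or $\eps^2\partial_{ss}$ acting on $\mu,\xi$, each of which supplies the $\sech(\hat\lambda\tau)$ decay; this yields the first estimate. For $\mathbf\Omega_1$ the term $\mbV\A_\eps^{-1}\eps\ddot b b^{-1}m$ contributes the modulation-independent $\eps e^{2K\theta|\ln\eps|}|\ln\eps|$ piece via the linear growth of $\mbV$, and the remaining pieces contribute the linear $\|\mu\|_{C^{1,\alpha}}$ term together with the genuinely quadratic $\eps^{-1}(\|\mu\|^2+\|\xi\|^2)$ coming from products such as $\mu\partial_{\tau\tau}k$ and $\mu\partial_\tau k$. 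The three remaining $\mathbf\Theta_j$ with $j\in\{1,2,4\}$ split in the same way into a modulation-free piece of size $\eps e^{2K\theta|\ln\eps|}|\ln\eps|$ and a piece linear or quadratic in the appropriate modulation norm; the $C^{2,\alpha}$ norm in the bound for $\mathbf\Theta_2$ enters through $b^2\kappa\partial_{\tau\tau}k\,\mbW'$ and $\eps^2\A_\eps^{-1}\partial_{ss}(\kappa\mbW)$, while the extra $\eps$ in the bound for $\mathbf\Theta_4$ is the explicit prefactor built into $\mathcal D^1_\eps$. For $\mathbf\Theta_3$ the first summand expands via $m^2-b^2=b^2(2\mu+\mu^2)$ into $-\eps^{-1}mb^2(2\mu+\mu^2)\mu V_iV_{3-i}^2$, and since $V_iV_{3-i}^2$ is exponentially localized the quadratic $\eps^{-1}\|\mu\|^2$ term is isolated; the other summands contain at most $\mbW^2$ factors, and once multiplied by $\eps$ or $\eps^2$ and evaluated on $\supp\chi$ they produce the $\eps e^{2K\theta|\ln\eps|}|\ln\eps|^5$ contribution via the quadratic growth of $\mbW$ combined with the weight.

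The final Lipschitz bound is obtained by viewing $\bs E^{in}$ as a smooth nonlinear functional of $(\mu,\xi,\partial_\tau\mu,\partial_\tau\xi,\partial_s\mu,\partial_s\xi)$: the profiles $\mbV^{(j)}$ and $\mbW^{(j)}$ enter only through evaluation at the argument $\tau+k(\tau,s)=\tau(1+\mu)+\xi$ and have globally Lipschitz derivatives, so the mean value theorem reduces any difference to the same expressions with $\mu,\xi$ replaced by $\delta\mu,\delta\xi$ modulo higher order terms, and the same bookkeeping as above produces the stated bound, with the $\eps^{-1}\|\delta\mu\|_{C^{0,\alpha}}$ piece originating from the quadratic parts of $\mathbf\Theta_3$ and $\mathbf\Omega_1$. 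The main technical difficulty throughout is not any individual estimate but the global bookkeeping of the small parameter $\eps$ against the exponential weight factor $e^{2K\theta|\ln\eps|}=\eps^{-2K\theta}$ and against logarithmic losses: each summand must exhibit a net positive power of $\eps$ that is compatible with the a priori sizes \eqref{hyp:size v}--\eqref{hyp:size mod}, and this closing of the accounts is precisely what forces $K\theta$ to be chosen small relative to the exponents $\wt\varsigma,\hat\varsigma$ later in section \ref{sec: thm main}.
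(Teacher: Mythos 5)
Your plan is correct and is essentially the argument the paper intends: the paper omits the proof of this lemma altogether (declaring it ``straightforward''), and your term-by-term bookkeeping — the weight cost $e^{\,2K\theta|\ln\eps|}$ on $\supp\chi$ where $|\tau|\lesssim|\ln\eps|$, the growth/decay of $\bs V,\bs W$ and their derivatives from \eqref{v}, \eqref{z1}, \eqref{z2} and Lemma \ref{lema w}, and the product structure \eqref{hyp:expl mod}--\eqref{hyp: phi} with $\sech(\hat\lambda\tau)$ decay beating the weight since $\theta<\hat\lambda$ — is exactly the intended verification of each displayed bound. The only loose point is your attribution of the $\eps^{-1}\|\delta\mu\|_{C^{0,\alpha}}$ term in the Lipschitz estimate to the quadratic parts of $\mathbf\Theta_3$ and $\mathbf\Omega_1$: that term originates chiefly from the $\eps^{-1}$-weighted pieces that are \emph{linear} in the modulation functions (the $\partial_\tau$, $\partial_{\tau\tau}$ terms in $\mathbf\Omega_0$ and the leading part of $\mathbf\Theta_3$, whose $\tau$-derivatives cost only the $C^{0,\alpha}(\Gamma)$ norm of $\hat\mu,\hat\xi$), but this misattribution does not affect the validity of the stated estimates or of your overall argument.
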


Next we will estimate the remaining terms in the error $\bs g$:
\[
\bs j=\bs g-\bs {E}^{in},
\]
where $\bs j=\sum_{k=1}^5 \bs j_k$, 
\[
\begin{aligned}
\bs j_1&=- \wt\rho(\bs q-\bs q^0)\cdot  \wt{\bs v}, \\ \bs j_2&=\chi \left(\begin{array}{c}
F_1(\bs {\mU},x)+f_u(\w_1,x)(1-\chi)\delta_1+\eps^{-4} \mv_1\mv_2^2\\
F_2(\bs {\mU},x)+f_u(\w_2,x)(1-\chi)\delta_2+\eps^{-4} \mv^2_1\mv_2
\end{array}
\right) +\chi \bs {E}^{in}, \\ \bs j_3&=\hat \chi \bs N(\bs {\mU}+ \bs \rho,x) 
\end{aligned}
\]
(see (\ref{decomp nonlinear})), an $\bs j_4$, $\bs j_5$ are the vectors of  the commutator terms in  (\ref{rhs 4.10}):
\[
\bs j_4=\commutator{\Delta}{\chi}\bs \delta, \qquad \bs j_5=\commutator{\Delta}{\bar\chi}\bar{\bs w}.
\] 

\begin{lemma}\label{lem:est inner error 2}
With the above notation we have
\begin{equation}
\label{est:j1}
\begin{aligned}
\|\bs j_1\|_{C^\alpha_\theta(\pC)}&\lesssim \left(\eps^{-1}+\eps^{-2-\alpha}(\|\mu\|_{C^\alpha(\Gamma)}+\|\xi\|_{C^\alpha(\Gamma)})\right)|\ln\eps|^5\|\wt{\bs v}\|_{C^\alpha_\theta(\pC)},
\\
\|\bs j_2\|_{C^\alpha_\theta(\pC)}&\lesssim \eps e^{\,2K\theta|\ln\eps|}|\ln\eps|+\eps^{1-\alpha} |\ln\eps| \blu{e^{\,(-\hat\lambda+\theta)  K|\ln\eps|}} \left(\|\mu\|_{C^{\alpha}(\Gamma)}+ \|\xi\|_{C^{\alpha}(\Gamma)}\right)+\eps^{-4} e^{\,-c|\ln\eps|^2},
\\
\|\bs j_3\|_{C^\alpha_\theta(\pC)}&\lesssim \eps^{-3}|\ln\eps|\left(\|\wt{\bs v}\|_{C^\alpha_\theta(\pC)}^2+e^{\,2K\theta|\ln\eps|/\hat m}\sum \|\bar{w}_{ij}\|^2_{C^\alpha(\Omega_j)}\right),
\\
\|\bs j_4\|_{C^\alpha_\theta(\pC)}&\lesssim \eps (|\ln\eps|)^{3-\alpha}e^{\,2 K\theta|\ln\eps|}+\eps^{-1-\alpha} \blu{e^{\,(-\hat\lambda+\theta)  K|\ln\eps|}}\left(\|\mu\|_{C^{1,\alpha}(\Gamma)}+\|\xi\|_{C^{1,\alpha}(\Gamma)}\right),
\\
\|\bs j_5\|_{C^\alpha_\theta(\pC)}&\lesssim \eps^{-2-\alpha} e^{\,2K\theta|\ln\eps|/\bar m} \sum \|\bar{w}_{ij}\|_{C^{1,\alpha}(\Omega_j)}
\end{aligned}
\end{equation}
\end{lemma}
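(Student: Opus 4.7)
My plan is to bound the five components $\bs j_1,\ldots,\bs j_5$ one at a time, using as the main tools the matching estimate of Lemma \ref{lem:error out}, its Lipschitz counterpart Corollary \ref{cor: lipschitz}, the nonlinear pointwise bounds \eqref{est:nf}--\eqref{est:m12}, and the transfer estimates \eqref{transf:out-in}--\eqref{transf:com 2}. The common thread is that whenever a function is localized in $\supp\nabla\chi$ or on an overlap annulus of width $\eta=K\eps|\ln\eps|$, the stretched variable $\tau=\eps^{-1}b(s)(t-\eps\zeta(s))$ sits at $|\tau|\sim K|\ln\eps|$, so the weight $\cosh(\theta\tau)$ contributes a factor $e^{\theta K|\ln\eps|}$; this accounts for every exponential $e^{2K\theta|\ln\eps|/m_\star}$ appearing in \eqref{est:j1}.

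For $\bs j_1$, inside $\supp\wt\rho$ we have $\bs{\mU}=\bs{\mv}$ modulo $\mathcal O(e^{-c|\ln\eps|^2})$, so that $\bs q-\bs q^0$ reduces to the difference between the $\eps^{-4}\mU_i\mU_j$-type entries (built from $\bs{\mv}=\eps\bs{\mv}^0+\eps^2\bs{\mv}^1$) and the leading-order $\eps^{-2}\mv^0_i\mv^0_j$-type entries. The two cancel at leading order in $\eps$, leaving a contribution driven by the $\eps^2\bs{\mv}^1$ piece (of size $\eps^{-1}|\ln\eps|^5$ in the unweighted norm) together with the Lipschitz effect of the modulations, since one derivative in $\tau$ costs $\eps^{-1-\alpha}$ via the stretching; multiplying by $\wt{\bs v}$ yields the announced bound. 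For $\bs j_2$ I would regroup the terms inside $\chi$ as a linearization of the outer equation at $\w_i$ plus the repulsion $\eps^{-4}\mv_1\mv_2^2$: Lemma \ref{lem:error out} gives a matching error of size $(\eps|\ln\eps|)^{3-\alpha}$ in the overlap annulus, the weight costs $e^{2K\theta|\ln\eps|}$, and the repulsion is globally exponentially small by \eqref{v2-v1}. The Lipschitz dependence on $(\mu,\xi)$ is supplied by Corollary \ref{cor: lipschitz} with $C$ of order one, which preserves the decay factor $e^{-\hat\lambda K|\ln\eps|}$ coming from \eqref{hyp: phi}.

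For $\bs j_3=\hat\chi\bs N$, I would split $\bs N=\bs N_f-\eps^{-4}\bs M$ as in \eqref{decomp nonlinear}: the piece $\bs N_f$ is quadratic in $\bs\rho$ and carries only unweighted constants, while $\eps^{-4}\bs M$ is bilinear in $\bs\rho$ with one factor of $\bs{\mU}$; on $\supp\hat\chi$ this factor is bounded by $\eps|\ln\eps|$, giving the prefactor $\eps^{-3}|\ln\eps|$, and transporting the outer parts $\bar\chi_j\bar w_{ij}$ through \eqref{transf:out-in} produces the factor $e^{2K\theta|\ln\eps|/\hat m}$. For $\bs j_4=\commutator{\Delta}{\chi}\bs\delta$, $\supp\nabla\chi\subset\{\eta/2<|t|<\eta\}$ lies in the overlap annulus so Lemma \ref{lem:error out} applies with $C$ of order one; the commutator scales $\eta^{-1}$ and $\eta^{-2}$ combine with the bounds on $\bs\delta$ and $\partial_t\bs\delta$ to yield the $\eps(|\ln\eps|)^{3-\alpha}e^{2K\theta|\ln\eps|}$ summand, while the modulation-dependent part retains the $\sech(\hat\lambda\tau)$ decay of \eqref{hyp: phi} at $|\tau|\sim K|\ln\eps|$, hence the factor $e^{(-\hat\lambda+\theta)K|\ln\eps|}$. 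Finally $\bs j_5$ is exactly the content of \eqref{transf:com 1}.

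The main obstacle is bookkeeping: all the parameters $\theta$, $\hat\lambda$, $K$, $\wt m$, $\hat m$, $\bar m$ must be chosen so that every exponential $e^{2K\theta|\ln\eps|/m_\star}$ is ultimately absorbed, either by a positive power of $\eps$ or by a matching-small factor from Lemma \ref{lem:error out}, whereas $e^{-\hat\lambda K|\ln\eps|}$ must dominate $e^{\theta K|\ln\eps|}$ in the modulation terms. I would therefore take $\hat\lambda$ large, $\theta$ a fixed small fraction of $\hat\lambda$, and then tune $\wt m,\hat m\in(0,1)$ and $\bar m>1$ compatibly with the cut-off relations \eqref{rr}; once these choices are in place, the individual estimates above combine routinely into \eqref{est:j1}.
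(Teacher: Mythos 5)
Your overall strategy (term-by-term estimates driven by Lemma \ref{lem:error out}, the nonlinear splitting \eqref{decomp nonlinear}--\eqref{est:m12}, and the transfer/commutator estimates, with the weight accounted for on supports of width $\sim\eta$) is the same as the paper's, and your treatment of $\bs j_3$, $\bs j_4$, $\bs j_5$ is essentially correct. But there are two concrete flaws in the parts concerning $\bs j_1$ and $\bs j_2$. The more serious one is your claim that ``the repulsion $\eps^{-4}\mv_1\mv_2^2$ is globally exponentially small by \eqref{v2-v1}.'' It is not: at the interface $\tau=\mathcal O(1)$ both components of $\bs V$ are of order one, so $\eps^{-4}\mv_1\mv_2^2=\mathcal O(\eps^{-1})$ there --- this is precisely the dominant interaction that defines the inner problem. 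The estimate for $\bs j_2$ only works because this term is arranged to cancel against the corresponding term $-\eps^{-4}\mU_i\mU_{3-i}^2$ inside $\chi F_i(\bs\mU,x)$ (equivalently, it is part of $\chi\bs E^{in}$, already handled in Lemma \ref{lem:est inner error}); what survives is the mismatch involving $(1-\chi)\delta_j$, which is supported in the overlap annulus where the \emph{complementary} component of $\bs\mv$ is Gaussian-small, and this is the sole source of the $\eps^{-4}e^{-c|\ln\eps|^2}$ term. Likewise the leading term $\eps e^{2K\theta|\ln\eps|}|\ln\eps|$ does not come from the matching error times the weight (that product is only $(\eps|\ln\eps|)^{3-\alpha}e^{2K\theta|\ln\eps|}$); it comes from $\chi f(\mU_i,x)=\mathcal O(|\mv_i|)=\mathcal O(\eps(1+|\tau|))$ on $\supp\chi$. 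As written, your decomposition would leave an uncancelled $\mathcal O(\eps^{-1})$ contribution and the bound for $\bs j_2$ would fail.

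The second issue is in $\bs j_1$: on $\supp\wt\rho$ one has $\bs\mU=\bs\mv-((1-\chi)\delta_1,(1-\chi)\delta_2)$, and by \eqref{est:matching} the correction $(1-\chi)\delta_i$ is only polynomially small, of size $(\eps|\ln\eps|)^{3-\alpha}$ plus a modulation part $\eps^{1-\alpha}|\ln\eps|\,e^{-\hat\lambda C^{-1}|\ln\eps|}\left(\|\mu\|_{C^\alpha(\Gamma)}+\|\xi\|_{C^\alpha(\Gamma)}\right)$ --- not $\mathcal O(e^{-c|\ln\eps|^2})$ as you assert. In the paper the comparison of \eqref{def:qq} and \eqref{def:qzero} is done pointwise through exactly this decomposition: the term $\eps^{-1}|\ln\eps|^5\|\wt{\bs v}\|_{C^\alpha_\theta(\pC)}$ comes from the $\eps^2\bs\mv^1$ cross terms (as you say), but the term $\eps^{-2-\alpha}\left(\|\mu\|_{C^\alpha(\Gamma)}+\|\xi\|_{C^\alpha(\Gamma)}\right)|\ln\eps|^5\|\wt{\bs v}\|_{C^\alpha_\theta(\pC)}$ arises from $\eps^{-4}\cdot\eps|\mv^0|\cdot(1-\chi)|\bs\delta|$ combined with the modulation part of \eqref{est:matching}, not from a heuristic ``one $\tau$-derivative costs $\eps^{-1-\alpha}$.'' If you repair these two points --- exhibit the cancellation of the repulsion in $\bs j_2$ and track the $(1-\chi)\bs\delta$ mismatch (with its $(\mu,\xi)$-dependence) in both $\bs j_1$ and $\bs j_2$ --- your argument coincides with the paper's. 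Note also that no choice of $K,\theta,\hat\lambda,\wt m,\hat m,\bar m$ is needed at this stage; the lemma is stated with the exponential factors left in place, and their absorption is postponed to the fixed-point argument.
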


\begin{proof}
We begin by proving the first bound in (\ref{est:j1}). Denote, as above, $\delta_i=\mv_i-\w_i$ so that 
\[
\bs\mU=(\mv_1, \mv_2)-((1-\chi) \delta_1, (1-\chi)\delta_2).
\]
Directly from the expressions (\ref{def:qq}) and (\ref{def:qzero}) and (\ref{est:matching}) we find
\[
|\bs j_1|\lesssim \eps^3\wt\rho (|\mv^0|^2+|\mv^1|^2)+\eps\wt\rho (1-\chi)(|\mv^0|+|\mv^1|)(|\delta_1|+|\delta_2|).
\]
From this the required bound follows. 

Using (\ref{transf:out-in}), (\ref{est:matching}) we find
\[
\|\chi\chi_i \delta_i\|_{C^\alpha_\theta(\pC)}\lesssim (\eps|\ln\eps|)^{3-\alpha}e^{\,2 K\theta|\ln\eps|}+\blu{\eps^{1-\alpha}|\ln\eps| e^{\,(-\hat\lambda+\theta)  K|\ln\eps|}} \left(\|\mu\|_{C^\alpha(\Gamma)}+\|\xi\|_{C^\alpha(\Gamma)}\right).
\]
The $i$th component of $\bs j_2$ is
\[
\begin{aligned}
{j}_{2i}&=\chi f(\mv_i-(1-\chi)\delta_i,x)+f_u(\w_i, x)\chi (1-\chi)\delta_i \\
&\qquad-\eps^{-4}(\mv_i-(1-\chi)\delta_i)(\mv_i-(1-\chi)\delta_{3-i})^2+\eps^{-4}\chi \mv_i\mv_{3-i}^2+\chi E^{in}_i.
\end{aligned}
\]
The first two terms are estimated by
\[
|\chi f(\mv_i-\chi_i\delta_i,x)+f_u(\w_i, x)\chi (1-\chi)\delta_i|\lesssim \chi|\mv_i|+\chi(1-\chi)|\delta_i|
\]
and the last term by 
\[
\begin{aligned}
&\eps^{-4}\chi|\mv_i(2(1-\chi)\mv_{3-i}\delta_{3-i}-(1-\chi)^2\delta_{3-i}^2)-(1-\chi)\delta_i(\mv_{3-i}-(1-\chi)\delta_{3-i})^2| \\
&\qquad \lesssim \eps^{-4}\chi\left((1-\chi)|\mv_i||\mv_{3-i}||\delta_{3-i}|+ (1-\chi)^2|\mv_i||\delta_{3-i}|^2+(1-\chi)\delta_i|\mv_{3-i}|^2
\right).
\end{aligned}
\]
Note that 
\[
\chi(1-\chi)|\mv_i| |\delta_{3-i}|+\chi(1-\chi)|\mv_{3-i}||\delta_{i}|\lesssim e^{\,-c|\ln\eps|^2}, \quad i=1,2.
\] 
Using the above we obtain the second estimate in (\ref{est:j1}) in a straightforward way.

To how the third estimate in (\ref{est:j1}) we decompose  $\bs N$ as in (\ref{decomp nonlinear}). Using (\ref{est:nf}) we find
\[
\|\hat \chi \bs N_{f}(\bs{U}+\bs \rho,x)\|_{C^\alpha_\theta(\pC)}\lesssim \|\wt{\bs v}\|_{C^\alpha_\theta(\pC)}^2+e^{\,2K\theta|\ln\eps|/\hat m}\sum \|\bar{w}_{ij}\|^2_{C^\alpha(\Omega_j)}.
\]
From (\ref{est:m11}) replacing $\chi_i$ by $\hat \chi$ we get
\[
\hat \chi \bs M_i(\bs U+\bs \rho)=\hat \chi U_i\rho^2_{3-i}+2 \hat \chi U_{3-i}\rho_i\rho_{3-i}+\hat \chi \rho_i\rho_{3-i}^2,
\]
hence
\[
\eps^{-4}\|\hat \chi \bs M_i(\bs U+\bs \rho)\|_{C^\alpha_\theta(\pC)}\lesssim \eps^{-3}|\ln\eps|\left(\|\wt{\bs v}\|_{C^\alpha_\theta(\pC)}^2+e^{\,2K\theta|\ln\eps|/\hat m}\sum \|\bar{w}_{ij}\|^2_{C^\alpha(\Omega_j)}\right).
\]
The latter estimate gives the second bound in (\ref{est:j1}).

Next we estimate the first commutator term the definition of $\bs g$. Slight modification of (\ref{transf:com 1})  and (\ref{est:matching}) gives
\[
\left\|\commutator{\Delta}{\chi}\delta_i\right\|_{C^\alpha_\theta(\pC)}\lesssim \eps (|\ln\eps|)^{3-\alpha}e^{\,2 K\theta|\ln\eps|}+\eps^{-1-\alpha} \blu{e^{\,(-\hat\lambda+\theta)  K|\ln\eps|}} \left(\|\mu\|_{C^{1,\alpha}(\Gamma)}+\|\xi\|_{C^{1,\alpha}(\Gamma)}\right)
\]
The remaining estimate is straightforward at this point. 

\end{proof}

As for the Lipschitz dependence of $\bs j$ on the various unknown functions involved we state without a proof the following:
\begin{corollary}
\label{cor:lip g}
The function
\[
(\wt{\bs v},\bar{\bs w}, \mu, \xi)\longmapsto \bs j(\cdot; \wt{\bs v},\bar{\bs w}, \mu, \xi),
\]
is Lipschitz in the following sense
\[
\begin{aligned}
& \|\bs j(\cdot; \wt{\bs v},\bar{\bs w}, \mu, \xi)- \bs j(\cdot; \wt{\bs v}+\delta \wt{\bs v},\bar{\bs w}+\delta \bar{\bs w}, \mu+\delta\mu, \xi+\delta\xi)\|_{C^\alpha_\theta(\pC)}\lesssim 
\left(\eps^{-1}+\eps^{-\hat \varsigma -\alpha}\right)|\ln\eps|^5\|\delta\wt{\bs v}\|_{C^\alpha_\theta(\pC)}
\\
&\qquad +\eps^{-2-\alpha} e^{\,2K\theta|\ln\eps|/\bar m} \sum \|\delta\bar{w}_{ij}\|_{C^\alpha(\Omega_j)}\\
 &\qquad+
 \eps^{-1-\alpha} \blu{e^{\,(-\hat\lambda+\theta)  K|\ln\eps|}} \left(\|\delta\mu\|_{C^{1,\alpha}(\Gamma)}+\|\delta\xi\|_{C^{1,\alpha}(\Gamma)}\right)
 \\
 &\qquad + \eps^{-3}|\ln\eps|\left(\|\wt{\bs v}\|_{C^\alpha_\theta(\pC)}\|\delta\wt{\bs v}\|_{C^\alpha_\theta(\pC)}+e^{\,2K\theta|\ln\eps|/\hat m}\sum \|\bar{w}_{ij}\|_{C^\alpha(\Omega_j)}\|\delta\bar{w}_{ij}\|_{C^\alpha(\Omega_j)}\right)
 \end{aligned}
 \]
\end{corollary}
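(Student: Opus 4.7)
The plan is to prove the estimate by bounding, one at a time, the Lipschitz differences of the five summands $\bs j_k$, $k=1,\dots,5$, that make up $\bs j$. The size estimates from Lemma \ref{lem:est inner error 2} serve as a template: for each $\bs j_k$, one essentially replaces one copy of the unknown with its increment and keeps the other copies as they are. In particular, all of the ingredients needed to carry this out are already in place: the Lipschitz dependence of $\bs \mv$ (hence of $\delta_i$) on $(\mu,\xi)$ from Corollary~\ref{cor: lipschitz}; the bound (\ref{transf:com 1}) for commutators with $\bar\chi$; the estimate (\ref{transf:out-in}) for transforming outer functions into the inner variables; and, implicitly, the assumed a priori bounds (\ref{hyp:size w})--(\ref{hyp:size mod}).

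The individual terms are handled as follows. For $\bs j_1 = -\wt\rho(\bs q - \bs q^0)\cdot \wt{\bs v}$, write
\[
\bs j_1(\cdots;\bs u + \delta\bs u)-\bs j_1(\cdots;\bs u) = -\wt\rho(\bs q-\bs q^0)\cdot \delta\wt{\bs v} - \wt\rho\bigl((\bs q-\bs q^0)[\mu+\delta\mu,\xi+\delta\xi] - (\bs q-\bs q^0)[\mu,\xi]\bigr)\cdot(\wt{\bs v}+\delta\wt{\bs v}),
\]
the first summand contributing the $\eps^{-1}|\ln\eps|^5\|\delta \wt{\bs v}\|$ term, the second the $\eps^{-\hat\varsigma-\alpha}|\ln\eps|^5\|\delta\wt{\bs v}\|$ term (through Corollary~\ref{cor: lipschitz} applied to $\bs q, \bs q^0$). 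For $\bs j_2$, which collects the explicit outer/inner matching error, expand the finite difference of $f$ and of the quartic terms, and again invoke Corollary~\ref{cor: lipschitz} to dominate differences of the form $\mv_i[\mu+\delta\mu,\xi+\delta\xi]-\mv_i[\mu,\xi]$ by the fast-decaying factor $\eps^{-\alpha}|\ln\eps|e^{\,(-\hat\lambda+\theta)K|\ln\eps|}$. The nonlinear term $\bs j_3 = \hat\chi\bs N(\bs{\mU}+\bs\rho,x)$ is quadratic in $\bs\rho$, so its finite difference is a bilinear combination of $\bs\rho$ and $\delta\bs\rho$; using $\|\bs \rho\| \lesssim \eps^{\wt\varsigma}$ from (\ref{hyp:size v}) plus the expansion (\ref{decomp nonlinear}) and the pointwise bounds (\ref{est:nf}), (\ref{est:m11}), one gets exactly the stated bilinear term. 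The commutator $\bs j_4 = [\Delta,\chi]\bs\delta$ is linear in $\bs\delta$, so the Lipschitz estimate reduces to the a priori estimate on $\delta_i$ differences supplied by Corollary~\ref{cor: lipschitz} together with the support properties of $\nabla\chi$ (hence the exponential factor $e^{\,(-\hat\lambda+\theta)K|\ln\eps|}$). Finally $\bs j_5 = [\Delta,\bar\chi]\bar{\bs w}$ is \emph{linear} in $\bar{\bs w}$, so its Lipschitz estimate is the same one already used for its size, applied to $\delta\bar{\bs w}$.

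Summing the five contributions gives the displayed bound. The main obstacle will not be any single term but rather the \textbf{bookkeeping}: one must keep careful track of the cutoff supports (which govern the exponential weights $e^{\,2K\theta|\ln\eps|/\bar m}$ vs.\ $e^{\,2K\theta|\ln\eps|/\hat m}$ vs.\ $e^{\,(-\hat\lambda+\theta)K|\ln\eps|}$), of the powers of $\eps$ coming from the change of variables $t = \eps(\tau b+\zeta)$, and of the a priori sizes $\eps^{\wt\varsigma}$, $\eps^{\bar\varsigma}$, $\eps^{\hat\varsigma}$. A subtle point is that in $\bs j_3$ the factor multiplying $\|\delta\wt{\bs v}\|$ is $\|\wt{\bs v}\|$, so the required bound is truly bilinear rather than genuinely Lipschitz with a small constant; this is what produces the last block of terms in the corollary's estimate, and it is consistent because in the contraction argument such terms will be absorbed thanks to the factor $\eps^{\wt\varsigma}$ provided by (\ref{hyp:size v}). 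Apart from these accounting issues the proof is mechanical, and no new analytic ingredient beyond those already established in Lemmas~\ref{lem:error out}--\ref{lem:est inner error 2} and Corollary~\ref{cor: lipschitz} is needed.
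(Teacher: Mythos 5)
The paper states this corollary without proof, as a direct consequence of the arguments of Lemma \ref{lem:est inner error 2}; your term-by-term treatment of $\bs j_1,\dots,\bs j_5$ — using Corollary \ref{cor: lipschitz} for the $(\mu,\xi)$-dependence of $\bs\mv$ and $\bs\delta$, the transfer/commutator estimates (\ref{transf:out-in}), (\ref{transf:com 1}), the quadratic structure of $\bs N$ via (\ref{decomp nonlinear}), (\ref{est:nf}), (\ref{est:m11}), and the a priori bounds (\ref{hyp:size w})--(\ref{hyp:size mod}) — is exactly the intended argument, and it reproduces the stated bound. One bookkeeping slip worth noting: the $\left(\eps^{-\hat\varsigma-\alpha}\right)|\ln\eps|^5\|\delta\wt{\bs v}\|_{C^\alpha_\theta(\pC)}$ contribution arises from the \emph{first} summand in your splitting of $\bs j_1$, namely from the coefficient $\eps^{-2-\alpha}\left(\|\mu\|_{C^\alpha(\Gamma)}+\|\xi\|_{C^\alpha(\Gamma)}\right)$ in the $\bs j_1$ estimate of Lemma \ref{lem:est inner error 2} combined with the a priori bound (\ref{hyp:size mod}), whereas the second summand (the $(\mu,\xi)$-variation of $\bs q-\bs q^0$ acting on $\wt{\bs v}+\delta\wt{\bs v}$) is proportional to $\|\delta\mu\|+\|\delta\xi\|$, not to $\|\delta\wt{\bs v}\|$, and is absorbed into the third block of the corollary's estimate.
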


After these preparations  we will bring the inner equation (\ref{sys:O3}) to the form in which the theory described in section \ref{sec: line oper} applies. We use (\ref{exp:laplace inner}) to pass to the inner stretched variables. 
{\color{black} {To simplify suitably the inner equation it is convenient to introduce new unknown
\[
\bs{z}=b^{-1} \wt{\bs v}.
\]
The results of Lemma \ref{lem:est inner error 2} and Corollary \ref{cor:lip g} remain unchanged if we replace $\wt{\bs v}$ by $\bs z$ on the right hand sides of the error estimates.

}}
Recall the operator $\bs{\mathfrak L}$ defined in (\ref{def:inner lin oper}). Simple manipulations allow us to  write the equation for ${\bs z}$ in the form  
\begin{equation}
\label{eq:inner_1}
\bs{\mathfrak L} {\bs z}=\eps^2 b^{-3}(s)\bs i(\bs z)+\eps^{2}b(s)^{-3}{\bs g},
\end{equation}
where
\begin{equation}
\label{eq:inner_2}
{\bs i}(\bs z)=\wt\rho\left[\eps^{-2}b^{3}(s)\bs{\mathfrak L} (\bs z)+\Delta_{\tau, s}(b(s) \bs z)-\bs q^0\cdot (b(s)\bs z)\right],
\end{equation}
$\Delta_{\tau, s}$ is the operator on the right hand side of (\ref{exp:laplace inner}) and $\bs g$ is defined in (\ref{rhs 4.10}). To control the right hand side of (\ref{eq:inner_1}) we only need to estimate the fist term in (\ref{eq:inner_2}). We set
\[
\bs i({\bs z})=\bs i_{1}({\bs z})+\bs i_{2}({\bs z}),
\]
where
\[
\begin{aligned}
\bs{i}_1(\bs z)&=\wt\rho\left[-\left(\eps^{-2}b^3\partial_{\tau\tau}+b(s)\partial_{ss}\right)\bs z+\Delta_{\tau, s}(b(s)\bs z)\right], \\
 \bs i_2(\bs z)&=\eps^{-2}\wt\rho\left(\begin{array}{cc}
b_0^2 \mathbf V_2^2-(\mv^0_2)^2 &2b_0^2\mathbf V_1\mathbf V_2-2\mv^0_1\mv^0_2\\
2b_0^2\mathbf V_1\mathbf V_2-2 \mv^0_1\mv^0_2 & b_0^2 \mathbf V_2^1- (\mv^0_1)^2
\end{array}\right)\cdot (b(s)\bs z).
\end{aligned}
\]
Note that $\bs i_2$ depends additionally on the unknown modulation functions $\mu, \xi$. The  estimates gathered below are not hard to prove at this point.
\begin{lemma}\label{lem:est inner error 3}
It holds
\begin{equation}
\label{est:i}
\begin{aligned}
\|\bs i_{1}({\bs z})\|_{C^\alpha_\theta(\pC)}&\lesssim \eps^{-1}|\ln\eps|\|{\bs z}\|_{C^{2,\alpha}_\theta(\pC)}\\
\|\bs i_{2}({\bs z})\|_{C^\alpha_\theta(\pC)}&\lesssim \eps^{-1}|\ln\eps|^2\left(1+\eps^{-1}\|\mu\|_{C^\alpha(\Gamma)}+\eps^{-1}\|\xi\|_{C^\alpha(\Gamma)}\right)\|{\bs z}\|_{C^\alpha_\theta(\pC)}.
\end{aligned}
\end{equation}
Also, the  functions $({\bs z}, \mu, \xi)\mapsto \bs i_{i}(\bs z; \mu, \xi)$ are Lipschitz in the following sense
\[
\begin{aligned}
&\|\bs i_{1}(\bs z)-\bs i_{1}(\bs z+\delta {\bs z})\|_{C^\alpha_\theta(\pC)}\lesssim \eps^{-1}|\ln\eps|\|\delta{\bs z}\|_{C^{2,\alpha}_\theta(\pC)}\\
&\|\bs i_{2}(\bs z;\mu,\xi)-\bs i_{2}(\bs z+\delta {\bs z};\mu+\delta\mu,\xi+\delta\xi)\|_{C^\alpha_\theta(\pC)}\lesssim \eps^{-2}|\ln\eps|^2\left(\eps+\|\mu\|_{C^\alpha(\Gamma)}+\|\xi\|_{C^\alpha(\Gamma)}\right)\|\delta{\bs z}\|_{C^{\alpha}_\theta(\pC)}\\ 
&\qquad+\eps^{-2}|\ln\eps|^2\left(\|\delta\mu\|_{C^\alpha(\Gamma)}+\|\delta\xi\|_{C^\alpha(\Gamma)}\right)\|{\bs z}\|_{C^\alpha_\theta(\pC)}
\end{aligned}
\]
\end{lemma}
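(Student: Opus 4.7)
The proof will be a direct computation using the explicit formula (\ref{exp:laplace inner}) for $\Delta_{\tau,s}$ and the explicit form (\ref{ans inner}) of the inner approximation $\bs{\mv}^0$. The key geometric fact I will exploit throughout is that $\wt\rho$ is compactly supported, so by the change of variables (\ref{var: stretched}) we have $|\tau|\lesssim|\ln\eps|$ on $\supp\wt\rho$; this controls the polynomial growth of the coefficients $a_{11}$, $a_{12}$, $c_1$, $c_2$ of the Laplacian, and gives $\A_\eps = 1+\mathcal O(\eps|\ln\eps|)$ and $\A_\eps^{-1/2}=1+\mathcal O(\eps|\ln\eps|)$ on that support.

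For the estimate on $\bs i_1$, the plan is to substitute (\ref{exp:laplace inner}) into $\Delta_{\tau,s}(b(s)\bs z)$ and to exploit that $b=b(s)$ commutes with $\partial_\tau$, so that the principal-part terms $\eps^{-2}b^3\partial_{\tau\tau}\bs z$ and $b\partial_{ss}\bs z$ cancel exactly against what is subtracted in the definition of $\bs i_1$. The residual reads, schematically,
\[
\bs i_1(\bs z)=\wt\rho\Bigl[(\A_\eps^{-1}-1)b\partial_{ss}\bs z+\A_\eps^{-1}(\ddot b\bs z+2\dot b\partial_s\bs z)-\eps^{-1}\kappa b^2\A_\eps^{-1/2}\partial_\tau\bs z+a_{11}b\partial_{\tau\tau}\bs z+a_{12}\partial_{\tau s}(b\bs z)+c_1 b\partial_\tau\bs z+\eps c_2\partial_s(b\bs z)\Bigr].
\]
The dominant coefficient is the $\eps^{-1}$ in front of $\partial_\tau\bs z$; on $\supp\wt\rho$ the other coefficients are at worst $\mathcal O(|\ln\eps|^2)$ (coming from $a_{11}$) or $\mathcal O(\eps|\ln\eps|)$ (from $\A_\eps^{-1}-1$ and from $\eps c_2$). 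Since each derivative of $\bs z$ up to second order is controlled by $\|\bs z\|_{C^{2,\alpha}_\theta(\pC)}\sech(\theta\tau)$, multiplying by $\cosh(\theta\tau)$ and taking a supremum gives the claimed bound. The H\"older seminorm part is handled identically, using that the $s$-derivatives of all coefficients are bounded (since $b_0,\zeta,\kappa\in C^{3,\alpha}$) and that $\partial_\tau\wt\rho=\mathcal O(|\ln\eps|^{-1})$ is harmless.

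For $\bs i_2$ the central step is to expand the matrix difference by using the explicit form $\mv^0_i=b(s)(1+\mu)V_i((1+\mu)\tau+\xi)$. I would write
\[
b_0^2V_iV_j-\mv^0_i\mv^0_j=(b_0^2-b^2)V_iV_j+b^2\bigl[V_iV_j-(1+\mu)^2V_iV_j((1+\mu)\tau+\xi)\bigr];
\]
the first piece is $\mathcal O(\eps)V_iV_j$ by the expansion $b=b_0+\eps b_1+\eps^2 b_2$, while a first-order Taylor expansion of the second produces leading terms of the form $(\mu\tau+\xi)(V_iV_j)'(\tau)+\mu(2+\mu)V_iV_j$ plus a quadratic remainder. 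The asymptotics (\ref{v}) give $|V_iV_j|\lesssim 1+\tau^2$ and $|(V_iV_j)'|\lesssim 1+|\tau|$, hence both are $\mathcal O(|\ln\eps|^2)$ on $\supp\wt\rho$. Multiplying by $\eps^{-2}$ and by $|b\bs z|\cosh(\theta\tau)\lesssim\|\bs z\|_{C^\alpha_\theta(\pC)}$ yields exactly the stated bound, with the $1$ coming from the $\mathcal O(\eps)$ piece, the $\eps^{-1}\|\mu\|$ from the $\mu\tau^2$ contribution, and the $\eps^{-1}\|\xi\|$ from the $\xi\tau$ contribution.

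The Lipschitz statements are obtained by repeating the above with differences. Since $\bs i_1$ is linear in $\bs z$ its Lipschitz bound is just the original estimate applied to $\delta\bs z$. For $\bs i_2$, which is linear in $\bs z$ but nonlinear in $(\mu,\xi)$, I would split
\[
\bs i_2(\bs z;\mu,\xi)-\bs i_2(\bs z+\delta\bs z;\mu+\delta\mu,\xi+\delta\xi)=\bigl[\bs i_2(\delta\bs z;\mu,\xi)\bigr]+\bigl[\bs i_2(\bs z+\delta\bs z;\mu+\delta\mu,\xi+\delta\xi)-\bs i_2(\bs z+\delta\bs z;\mu,\xi)\bigr];
\]
the first bracket is handled by the already proven estimate, and the second uses that the $(\mu,\xi)$-derivative of $(1+\mu)^2V_iV_j((1+\mu)\tau+\xi)$ is again $\mathcal O(|\ln\eps|^2)$ on $\supp\wt\rho$, giving the same powers of $\eps$ and $|\ln\eps|$ multiplying $\|\delta\mu\|_{C^\alpha}+\|\delta\xi\|_{C^\alpha}$. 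The main obstacle, though conceptually routine, is the bookkeeping required to match powers of $|\tau|$ arising from the Laplacian coefficients and from the Taylor expansion of $V_i$ against the correct powers of $|\ln\eps|$ coming from $\supp\wt\rho$, so as to obtain the sharp estimates stated in the Lemma rather than losing a factor of $|\ln\eps|$.
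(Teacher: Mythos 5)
Your argument is correct and is precisely the direct verification the paper leaves to the reader — the authors state the lemma with only the remark that the estimates "are not hard to prove at this point," and the ingredients you use (exact cancellation of the principal part $\eps^{-2}b^3\partial_{\tau\tau}+b\,\partial_{ss}$ in $\bs i_1$, the bound $|\tau|\lesssim|\ln\eps|$ on $\supp\wt\rho$ controlling $\A_\eps$ and the coefficients $a_{11},a_{12},c_1,c_2$, and the linear growth of $\bs V$ together with the expansion of $b$ and the Taylor expansion in $(\mu,\xi)$ for $\bs i_2$) are exactly the intended ones. The only caveat, harmless here, is that the linear-in-$(\mu,\xi)$ bound for $\bs i_2$ implicitly uses the standing a priori smallness (\ref{hyp:size mod}) to absorb the quadratic Taylor remainder, which is consistent with how the lemma is applied.
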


Anticipating later developments we observe that several error terms in Lemma \ref{lem:est inner error} and Lemma \ref{lem:est inner error 2} carry terms of the size $\mathcal O(\eps e^{\,2K\theta|\ln\eps|}|\ln\eps|^q)$ with some $q>0$. If we rely on 
(\ref{est:glob hold}) of Proposition \ref{prop: inner lineal} to solve (\ref{eq:inner_1}) then it seems hard to close the estimates consistently with (\ref{hyp:size v}). To overcome this technical difficulty we will use $C^{1,\alpha}$ estimates  to control the leading order term of the function ${\bs z}^\parallel$ (c.f  (\ref{est:glob hold_2}) of Proposition \ref{prop: inner lineal}.)  We state now the estimates needed later on. 
\begin{corollary}
\label{lem:c1beta}
It holds
\begin{equation}
\label{est:c1beta}
\|\chi \bs \Omega_1(\cdot;0)\|_{C^{1,\alpha}_\theta(\pC)}+\|\chi\mathbf\Theta_l(\cdot; 0)\|_{C^{1,\alpha}_\theta(\pC)}+\|\bs j_i(\cdot; 0)\|_{C^{1,\alpha}_\theta(\pC)}\lesssim \eps e^{\,2 K\theta|\ln\eps|}|\ln\eps|^5
\end{equation}
where $l=1,\dots, 4$, $i=2,4$ and  $(\cdot;0)$ indicates that we take $\mu=0$, $\xi=0$ in the definition of the corresponding function.
\end{corollary}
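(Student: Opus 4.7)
My plan is to establish Corollary \ref{lem:c1beta} by substituting $\mu \equiv 0$, $\xi \equiv 0$ into the explicit formulas of section \ref{sec. 4.5}, and upgrading the estimates from Lemmas \ref{lem:est inner error} and \ref{lem:est inner error 2} from the $C^\alpha_\theta$ scale to the $C^{1,\alpha}_\theta$ scale. The core observation is that at the base point $\mu = \xi = 0$ every term in $\chi \bs\Omega_1(\cdot;0)$, $\chi \mathbf\Theta_l(\cdot;0)$, $\bs j_2(\cdot;0)$, and $\bs j_4(\cdot;0)$ reduces to an explicit smooth function built from the geometric data of $\Gamma$ and the one-dimensional profiles $\mbV,\mbW$ and their derivatives; no modulation-induced singular behavior is present, so differentiating once produces only a controlled logarithmic loss.

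Concretely, setting $k \equiv 0$, $m \equiv b(s)$, one has $\partial_\tau k = \partial_{\tau\tau}k = \partial_\tau m = 0$, which kills every term in the expressions for $\bs\Omega_1$ and $\mathbf\Theta_1,\dots,\mathbf\Theta_4$ carrying a $\tau$-derivative of $m$ or $k$, or a bare factor of $\mu$. The surviving pieces each take the form $\eps^p\,\chi(\tau,s)\,P(\tau,s)\,\Phi(\tau)$, with $p \geq 1$, where $P$ is smooth in $s$ through the bounded quantities $b,\dot b,\ddot b,\kappa,\dot\kappa,\zeta,\dot\zeta$ (by Corollary \ref{cor match}) and polynomial in $\tau$ of degree at most two (from the expansion of $\A_\eps$ and the factor $(\tau b^{-1}+\zeta)$), and $\Phi$ is drawn from $\{\mbV,\mbV',\mbV'',\mbW,\mbW',\mbW''\}$ or smooth combinations thereof. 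By (\ref{v}), (\ref{z1}), (\ref{z2}) and Lemma \ref{lema w}, one has $|\Phi(\tau)| \lesssim 1 + |\tau|^2$, so on $\supp \chi \subset \{|\tau| \leq 2K|\ln\eps|/b_0\}$ the weighted pointwise bound $\cosh(\theta\tau)|\chi P\Phi| \lesssim |\ln\eps|^3 e^{\,2K\theta|\ln\eps|}$ yields a $C^0_\theta$ estimate of size $\eps|\ln\eps|^3 e^{\,2K\theta|\ln\eps|}$.

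For the $C^{1,\alpha}_\theta$ norm, the key calculation is that on $\supp \chi$,
\[
\partial_\tau \chi = \mathcal O(1/|\ln\eps|), \qquad \partial_s \chi = \mathcal O(1),
\]
which follows from the chain rule using $t = \eps b^{-1}(s)\tau + \eps\zeta(s)$ and $\eta = K\eps|\ln\eps|$. Hence differentiating the cutoff produces no inverse power of $\eps$. Derivatives of $\Phi(\tau)$ yield profiles of the same family (with growth degree increased by at most one); $s$-derivatives of $P$ yield bounded coefficients; derivatives of $\A_\eps^{\pm 1/2}$ introduce only harmless $\eps$-factors. Each differentiation adds at most one power of $|\ln\eps|$, and the $\alpha$-Hölder seminorm on consecutive cells $(\tau,\tau+1)\times[0,|\Gamma|]$ is controlled by these first-derivative bounds. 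Direct bookkeeping gives a total of $\eps|\ln\eps|^5 e^{\,2K\theta|\ln\eps|}$, comfortably absorbing all polynomial-in-$|\ln\eps|$ losses.

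For $\bs j_2(\cdot;0)$ and $\bs j_4(\cdot;0)$ I would additionally use: the construction in section \ref{sec:match} cancels matching errors up to order $\mathcal O(\eps^3)$ along $\Gamma$, giving a leading defect of order $(\eps|\ln\eps|)^{3-\alpha}$ on the overlap region by Lemma \ref{lem:error out} at $\mu = \xi = 0$; the commutator identity $[\Delta,\chi]\delta_i = 2\nabla\chi \cdot \nabla\delta_i + (\Delta\chi)\delta_i$ carries only logarithmic losses in the stretched variables as computed above; and the super-polynomially small tails $\mathcal O(\eps^{-4}e^{\,-c|\ln\eps|^2})$ from Lemma \ref{lem:est inner error 2} are trivially absorbed. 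The main technical hurdle is the careful accounting of polynomial-in-$|\ln\eps|$ losses from $\tau$-growth of $\mbW$ balanced by the gains from $\partial_\tau\chi \sim 1/|\ln\eps|$; this amounts to one extra derivative's worth of bookkeeping relative to the proofs of Lemmas \ref{lem:est inner error} and \ref{lem:est inner error 2}.
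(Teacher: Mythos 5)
Your proposal is correct and follows essentially the same route as the paper: the paper's own (very brief) proof likewise observes that at $\mu=\xi=0$ every term depends only on the already determined, sufficiently smooth data $b,\zeta,\kappa$ and the profiles $\mbV,\mbW$, so the $C^{1,\alpha}_\theta$ bound follows by the same bookkeeping as in Lemmas \ref{lem:est inner error} and \ref{lem:est inner error 2}, with details omitted. Your write-up simply supplies that omitted bookkeeping (cutoff derivatives of size $\mathcal O(1/|\ln\eps|)$ in the stretched variables, at most quadratic $\tau$-growth of the profiles, and the matching/commutator estimates of Lemma \ref{lem:error out}), arriving at the same bound.
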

To prove (\ref{est:c1beta}) we note that all terms appearing on the left hand side depend only on already known functions which have sufficient smoothness to state $C^{1,\alpha}_\theta(\pC)$ estimate as claimed. We omit standard details similar to those in the proofs of Lemma \ref{lem:est inner error} and Lemma \ref{lem:est inner error 2}.

\subsection{The modulation equation}\label{sec:modulation}
So far the functions $\mu$ and $\xi$ were merely parameters. Now we will derive the modulation equation which determines them in terms of the remaining unknowns. This will complete the derivation of the nonlinear problem we need to solve.  
From the definition of the modulation functions (\ref{hyp:expl mod}) we see that in reality we will obtain a system of nonlinear equations for finitely many Fourier coefficients $\hat\mu_{\eps, k}$, $\hat \xi_{\eps,k}$, ${\color{black}{|k|}}\leq \bar K_\eps\sim \eps^{-1}$.     
For brevity we denote
\[
\hat \mu(s)=\left(\sum_{{\color{black}{-\bar K_\eps}}}^{\bar K_\eps} \hat \mu_{k}\psi_{k}(s)\right), \qquad \hat \xi(s)=\left(\sum_{{\color{black}{-\bar K_\eps}}}^{\bar K_\eps} \hat \xi_{k}\psi_{k}(s)\right), 
\]
so that 
\[
\mu(\tau, s)=\hat \mu(s)\phi_{\mu}(\tau), \qquad \xi(\tau, s)=\hat \xi(s)\phi_{\xi}(\tau).
\]
We set 
\[
\mathfrak D=-\partial_{\tau\tau}-\eps^2 b^{-2}(s)\partial_{ss}.
\]
Our heuristic is based on the fact that 
\[
\bs \Omega_0=-\eps^{-1} b^3 \left[\bs X\mathfrak D\xi+\bs Y\mathfrak D\mu- 4\bs X \partial_\tau \mu\blu{+} 2\bs V'' (\mu+\tau\partial_\tau \mu+ \partial_\tau\xi)\right]+\dots,
\]
where $\dots$ indicate terms which should be smaller 
c.f (\ref{def omega zero}). We denote
\[
\hat {\bs\Omega}_{0}(\mu, \xi)=\bs X \mathfrak D\xi+\bs Y \mathfrak D \mu - 4\bs X \partial_\tau \mu\blu{+}2\bs V'' (\mu+\tau\partial_\tau \mu+ \partial_\tau\xi).
\]
\blu{
\begin{lemma}\label{lem phi}
There exist even functions $\phi_\mu$, $\phi_\xi$ satisfying (\ref{hyp: phi}) and positive constants $\ell_\mu$ and $\ell_\xi$ such that
\begin{equation}
\label{eq: lem phi 1}
\begin{aligned}
\langle \hat {\bs\Omega}_{0}(\mu, \xi), \bs X\rangle&=\ell_\xi \hat\xi+\mathcal O(\bar \omega)\hat \xi,\\
\langle \hat {\bs\Omega}_{0}(\mu, \xi), \bs Y\rangle&=\ell_\mu\hat\mu+\mathcal O(\bar\omega)\hat \mu.
\end{aligned}
\end{equation}
\end{lemma}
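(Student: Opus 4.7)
The plan is to substitute the separated ansatz $\mu(\tau,s)=\hat\mu(s)\phi_\mu(\tau)$, $\xi(\tau,s)=\hat\xi(s)\phi_\xi(\tau)$ into $\hat{\bs\Omega}_0(\mu,\xi)$ and compute the pairings with $\bs X$ and $\bs Y$ by integration in $\tau\in\R$. Using $\mathfrak D\mu=-\phi_\mu''\hat\mu-\eps^2 b^{-2}\phi_\mu\hat\mu''$ (and analogously for $\xi$) and writing $\langle\cdot,\cdot\rangle_\tau$ for the $\R$-inner product, one obtains a schematic $2\times 2$ linear system
\[
\begin{pmatrix}\langle \hat{\bs\Omega}_0,\bs X\rangle_\tau\\ \langle \hat{\bs\Omega}_0,\bs Y\rangle_\tau\end{pmatrix}
= M\!\begin{pmatrix}\hat\xi\\ \hat\mu\end{pmatrix}
+\eps^2 N(s)\!\begin{pmatrix}\hat\xi''\\ \hat\mu''\end{pmatrix},
\]
where $M=M(\phi_\mu,\phi_\xi)$ and $N(s)=b^{-2}(s)\wt N(\phi_\mu,\phi_\xi)$ depend linearly on integrals of $\phi_\mu,\phi_\xi$ (and their derivatives) against the scalars $|\bs X|^2$, $\bs X\!\cdot\!\bs Y$, $|\bs Y|^2$, $\bs V''\!\cdot\!\bs X$, $\bs V''\!\cdot\!\bs Y$, $\tau\,\bs V''\!\cdot\!\bs X$ and $\tau\,\bs V''\!\cdot\!\bs Y$. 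Crucially, using the symmetry $V_1(\tau)=V_2(-\tau)$ (inherited by $\bs X$ and $\bs Y$) each of these seven scalar profiles is an \emph{even} function of $\tau$, so evenness of $\phi_\mu,\phi_\xi$ is compatible with the integrations and simply forbids certain parity-odd integrals from appearing.

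The goal is to select even, exponentially decaying profiles $\phi_\mu,\phi_\xi$ making $M$ \emph{diagonal with strictly positive entries}. I would start from the one-parameter family $\phi_\mu^{(0)}(\tau)=\phi_\xi^{(0)}(\tau)=\sech(\hat\lambda\tau)$, parameter $\hat\lambda>0$ to be chosen large, and add a two-parameter even perturbation
\[
\phi_\mu=\sech(\hat\lambda\tau)+a\,\psi(\tau),\qquad
\phi_\xi=\sech(\hat\lambda\tau)+c\,\psi(\tau),
\]
with $\psi(\tau)=\tau^2\sech(\hat\lambda\tau)$, say. Then the off-diagonal entries $M_{12}$ and $M_{21}$ depend affinely on $(a,c)$, while the diagonal entries $M_{11}$ and $M_{22}$ depend on them only through $\mathcal O(|a|+|c|)$ corrections. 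By the implicit function theorem (or a direct calculation of a $2\times 2$ linear system), the two equations $M_{12}=0$, $M_{21}=0$ can be solved for $(a,c)$ small provided a certain $2\times 2$ Jacobian is invertible. This invertibility is verified by a direct computation of the unperturbed coefficients $\partial_a M_{12}$ and $\partial_c M_{21}$, reducing to explicit, nonzero Gaussian-type integrals against $\bs V,\bs V',\bs V''$.

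Once the off-diagonal entries vanish, I would identify the resulting diagonal entries as the constants $\ell_\xi:=M_{11}$ and $\ell_\mu:=M_{22}$ and check positivity. After integration by parts, the leading contributions take the form
\[
\ell_\xi=\int_\R\!\bigl(\phi_\xi'\,(|\bs X|^2)'+2\phi_\xi'\,\bs V''\!\cdot\!\bs X\bigr)\,d\tau+o(1),\quad
\ell_\mu=\int_\R\!\bigl(\phi_\mu'\,(\bs Y\!\cdot\!\bs X)'+2\phi_\mu(\bs V''\!\cdot\!\bs Y)+\dots\bigr)\,d\tau+o(1),
\]
which at leading order reduce to positive multiples of $\int_\R\!\bigl|\tfrac{d}{d\tau}\sqrt{|\bs X|^2}\bigr|^2\sech$-weighted integrals; a careful use of the stability inequality of Lemma \ref{lin properties}(ii) together with the fact that $\bs X=\bs V'$ is the unique bounded zero mode of $\bs L_0$ (Lemma \ref{lin properties}(i)) is enough to conclude $\ell_\mu,\ell_\xi>0$ for $\hat\lambda$ sufficiently large.

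Finally, the $\eps^2 N(s)\binom{\hat\xi''}{\hat\mu''}$ remainder must be absorbed into the $\mathcal O(\bar\omega)$ error. Because by construction (see (\ref{hyp:ort mod})-(\ref{hyp:expl mod})) both $\hat\mu$ and $\hat\xi$ are finite Fourier sums over indices $|k|\le\bar K_\eps$ with $\eps\omega_k\le\bar\omega$, one has $\eps^2\|\hat\mu''\|\lesssim\bar\omega^2\|\hat\mu\|$ and similarly for $\hat\xi$, which gives the claimed $\mathcal O(\bar\omega)$ correction (with room to spare). The main obstacle, in my view, is the positivity step: rigorously reducing $\ell_\mu,\ell_\xi$ to manifestly positive quadratic quantities without losing control over the mixed sign terms from $\tau\bs V''\!\cdot\!\bs X$, and choosing $\hat\lambda$ large enough to dominate them. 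Everything else is essentially bookkeeping.
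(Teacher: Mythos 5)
There are two problems with your argument, one structural and one that is a genuine gap.

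First, the parity bookkeeping is off. With $V_1(\tau)=V_2(-\tau)$ the profiles $\bs X\cdot\bs Y$, $\bs V''\cdot\bs X$ and $\tau\,\bs V''\cdot\bs Y$ are \emph{odd}, not even (only $|\bs X|^2$, $|\bs Y|^2$, $\bs V''\cdot\bs Y$, $\tau\,\bs V''\cdot\bs X$ are even). The consequence is the opposite of what you describe: for \emph{any} even $\phi_\mu,\phi_\xi$ the cross pairings vanish identically, i.e. $\langle\hat{\bs\Omega}_0,\bs X\rangle$ contains only $\hat\xi$ and $\langle\hat{\bs\Omega}_0,\bs Y\rangle$ only $\hat\mu$. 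So the whole perturbation-plus-implicit-function-theorem machinery you set up to kill $M_{12},M_{21}$ is solving equations that are already $0=0$; it is harmless but misdiagnoses the structure. (Your treatment of the $\eps^2\partial_{ss}$ remainder as $\mathcal O(\bar\omega)$ via $\eps\omega_k\le\bar\omega$ is fine.)

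The real gap is positivity of the diagonal entries, which you leave to an unspecified "careful use" of the stability inequality; that inequality does not control the sign here, and your concrete choice goes the wrong way. Using $(|\bs X|^2)_{\tau\tau}=2(\bs V''\cdot\bs X)_\tau$ one finds $\langle\hat{\bs\Omega}_0,\bs X\rangle=-4\hat\xi\int_\R(\bs V''\cdot\bs X)_\tau\,\phi_\xi\,d\tau+\mathcal O(\bar\omega)\hat\xi$. With $\phi_\xi=\sech(\hat\lambda\tau)$ and $\hat\lambda$ large this integral concentrates at $\tau=0$, where $(\bs V''\cdot\bs X)_\tau(0)=2\bigl(1-V_1'(0)^2\bigr)>0$ (since $V_1'(0)^2=\tfrac12(1+V_1'(\infty)^2)=\tfrac34$, using $V_1'(\infty)^2=\tfrac12$ from \cite{sourdis 1}), so your $\ell_\xi$ comes out \emph{negative}, and an $\mathcal O(|a|+|c|)$ perturbation cannot flip the sign of this leading term. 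The paper avoids the issue by a different choice of weights: it takes $\phi_\xi=-(\bs V''\cdot\bs X)_\tau$ (Gaussian-decaying, even) and $\phi_\mu=\bigl(-(|\bs Y|^2)_{\tau\tau}+(\bs X\cdot\bs Y)_\tau+2\bs V''\cdot\bs Y-2(\tau\bs V''\cdot\bs Y)_\tau\bigr)\sech(\hat\lambda\tau)$, so that $\ell_\xi$ and $\ell_\mu$ become $L^2$-squares of these profiles and positivity reduces to showing the profiles are not identically zero — verified from $\phi_\xi(0)=-2(1-V_1'(0)^2)<0$ and from the limit $-4|\bs V'(\infty)|^2=-2$ at infinity for the $\mu$-profile. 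Without this (or some substitute) choice, the positivity claim in your write-up is unsupported.
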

We prove this lemma in section \ref{pr lem phi}.
}

We will first use the above to solve the leading order terms $\mu^0$, $\xi^0$ (in $\eps$) of the modulation functions $\mu, \xi$. They depend only on the already known functions and in particular on $b$ and $\zeta$ which are $C^{3,
\alpha}$ regular by assumption. Let us denote  (c.f. Corollary \ref{lem:c1beta})
\[
\bs \Psi_0= b^{-3}\left[\bs \Omega_1(\cdot; 0)\chi+\sum_{l=1}^4\mathbf\Theta_l(\cdot; 0)\chi+\sum_{i=2,4} \bs j_i(\cdot; 0)\right].
\]
Denote 
\[
\bs \Psi_0^\parallel= P_{\bar K_\eps}\bs \Psi_0, \qquad \bs \Psi_0^\perp=\bs\Psi_0-\bs\Psi_0^\parallel,
\]
(c.f notation of Proposition \ref{prop: inner lineal}).
Our goal is to solve the system
\begin{equation}
\label{sys:muxi0}
\begin{aligned}
\langle \chi \hat{\bs \Omega}_0(\mu^0,\xi^0), \bs X \rangle &=\eps \langle \bs \Psi^\parallel_0, \bs X\rangle,\\
\langle \chi \hat{\bs \Omega}_0(\mu^0,\xi^0), \bs Y\rangle &=\eps \langle \bs \Psi^\parallel_0, \bs Y\rangle,
\end{aligned}
\end{equation}
for the unknown functions $\mu^0$ and $\xi^0$ of the form 
\[
\mu^0(\tau, s)=\hat \mu^0(s)\phi_{\hat\lambda}(\tau), \qquad \xi^0(\tau, s)=\hat \xi^0(s)\phi_{\hat\lambda}(\tau).
\]
where
\[
\hat \mu^0(s)=\left(\sum_{\blu{k=-\bar K_\eps}}^{\bar K_\eps} \hat \mu^0_{k}\psi_{k}(s)\right), \qquad \hat \xi^0(s)=\left(\sum_{\blu{k=-\bar K_\eps}}^{\bar K_\eps} \hat \xi^0_{k}\psi_{k}(s)\right), 
\]
in the H\"older class  $C^{2,\alpha}$. For this purpose and also having in mind later developments we state:
\blu{
\begin{lemma}\label{lem:mod 1}
Consider the following equations 
\[
\langle \chi\hat {\bs\Omega}_{0}(a, b), \bs T \rangle =\langle \bs \gamma^\parallel, \bs T\rangle
\]
where $\bs T=\bs X$ or $\bs T=\bs Y$. There exists $\hat\lambda$ sufficiently large such that  for any $\alpha\in(0,1)$ we have
\[
\|\hat a\|_{C^{2,\alpha}(\Gamma)}+\|\hat b\|_{C^{2,\alpha}(\Gamma)}\lesssim \eps^{-1+\alpha}\|\bs \gamma^\parallel\|_{C^\alpha_\theta(\pC)}
\]
and this estimate can be improved 
\[
\|\hat a\|_{C^{2,\alpha}(\Gamma)}+\|\hat b\|_{C^{2,\alpha}(\Gamma)}\lesssim |\ln\eps| \|\bs \gamma^\parallel\|_{C^{1,\alpha}_\theta(\pC)}
\]
\end{lemma}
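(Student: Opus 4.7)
The approach mirrors the proof of part (i) of Proposition \ref{prop: inner lineal}: I treat the equations mode by mode in the eigenbasis $\{\psi_k\}_{|k|\le\bar K_\eps}$, use the near-diagonal structure supplied by Lemma \ref{lem phi}, and reassemble the estimates by summation plus an elliptic bootstrap. To start, write
\[
a(\tau,s)=\hat a(s)\phi_\mu(\tau),\qquad b(\tau,s)=\hat b(s)\phi_\xi(\tau),
\]
with $\hat a=\sum_{|k|\le\bar K_\eps}\hat a_k\psi_k$, $\hat b=\sum_{|k|\le\bar K_\eps}\hat b_k\psi_k$, and let $\gamma_{\bs T,k}$ denote the $k$-th Fourier coefficient of $\gamma_{\bs T}(s):=\langle\bs\gamma^\parallel,\bs T\rangle(s)$ for $\bs T\in\{\bs X,\bs Y\}$. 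Each $\psi_k$ is an eigenfunction of $-b^{-2}\partial_{ss}$ with eigenvalue $\omega_k^2$, and $(\eps\omega_k)^2\le\bar\omega^2$ for $|k|\le\bar K_\eps$, so the $\eps^2 b^{-2}\ddot{(\cdot)}$ part of $\mathfrak D$ is swept into the $\mathcal O(\bar\omega)$ remainder of Lemma \ref{lem phi}. Projecting onto mode $k$ therefore reduces the two equations to the scalar pair
\[
(\ell_\xi+\mathcal O(\bar\omega))\,\hat b_k=\gamma_{\bs X,k},\qquad(\ell_\mu+\mathcal O(\bar\omega))\,\hat a_k=\gamma_{\bs Y,k}.
\]
Choosing $\hat\lambda$ large enough to guarantee $\ell_\mu,\ell_\xi>0$ (Lemma \ref{lem phi}) and $\bar\omega$ small, these are uniquely solvable with a $k$- and $\eps$-uniform constant, giving $|\hat a_k|\lesssim|\gamma_{\bs Y,k}|$ and $|\hat b_k|\lesssim|\gamma_{\bs X,k}|$.

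For the $C^0$ bound I would invoke the Fourier-decay estimate used in the proof of Proposition \ref{prop: inner lineal}: for $\bs\gamma^\parallel\in C^\alpha_\theta(\pC)$ one has $|\gamma_{\bs T,k}|\lesssim|k|^{-\alpha}\|\bs\gamma^\parallel\|_{C^\alpha_\theta(\pC)}$, the pairings with $\bs X=\bs V'$ and $\bs Y=\tau\bs V'+\bs V$ costing only an $\eps$-independent constant by the asymptotics \eqref{z1}--\eqref{z2} and the choice $\theta<1$. Coupled with $\|\psi_k\|_{C^0(0,|\Gamma|)}\lesssim 1$ from Lemma \ref{lemm apend unif}, summation produces
\[
\|\hat a\|_{C^0(\Gamma)}+\|\hat b\|_{C^0(\Gamma)}\lesssim\|\bs\gamma^\parallel\|_{C^\alpha_\theta(\pC)}\sum_{1\le|k|\le\bar K_\eps}|k|^{-\alpha}\lesssim\bar K_\eps^{1-\alpha}\|\bs\gamma^\parallel\|_{C^\alpha_\theta(\pC)}\sim\eps^{-1+\alpha}\|\bs\gamma^\parallel\|_{C^\alpha_\theta(\pC)}.
\]

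Upgrading to $C^{2,\alpha}(\Gamma)$ proceeds by elliptic regularity for the second-order ODE system for $(\hat a,\hat b)$. On the admissible-mode subspace the operator is a uniformly bounded, invertible perturbation of multiplication by $\operatorname{diag}(\ell_\mu,\ell_\xi)$ whose principal part $\eps^2 b^{-2}\partial_{ss}$ has H\"older coefficients, and standard Schauder estimates give
\[
\|\hat a\|_{C^{2,\alpha}(\Gamma)}+\|\hat b\|_{C^{2,\alpha}(\Gamma)}\lesssim\|\hat a\|_{C^0(\Gamma)}+\|\hat b\|_{C^0(\Gamma)}+\|\gamma_{\bs X}\|_{C^\alpha(\Gamma)}+\|\gamma_{\bs Y}\|_{C^\alpha(\Gamma)}.
\]
The $C^\alpha$ norms on the right are controlled by $\|\bs\gamma^\parallel\|_{C^\alpha_\theta(\pC)}$ (integrate in $\tau$ against the exponentially localized $\bs T$), and together with the $C^0$ bound above this closes the first assertion. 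For the improved estimate, the extra regularity $\bs\gamma^\parallel\in C^{1,\alpha}_\theta(\pC)$ sharpens the Fourier decay to $|\gamma_{\bs T,k}|\lesssim|k|^{-1}\|\bs\gamma^\parallel\|_{C^{1,\alpha}_\theta(\pC)}$ (the same observation invoked just after \eqref{est:glob hold_2}), so the sum $\sum_{1\le|k|\le\bar K_\eps}|k|^{-1}\lesssim|\ln\eps|$ replaces the previous one and the Schauder bootstrap closes the argument.

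The principal obstacle is the uniform invertibility at the first step. It requires verifying that the terms of $\hat{\bs\Omega}_0$ coupling $\hat a$ and $\hat b$, namely $-4\bs X\partial_\tau\mu$ and $2\bs V''(\mu+\tau\partial_\tau\mu+\partial_\tau\xi)$, really decouple in the pairings with $\bs X$ and $\bs Y$ once $\phi_\mu,\phi_\xi$ are chosen as in Lemma \ref{lem phi}, and that the $\mathcal O(\bar\omega)$ correction remains harmless uniformly as $|k|$ approaches $\bar K_\eps$. Once those two checks are in place, the scalar reduction is automatic and the remainder of the argument is a direct adaptation of the Fourier summation already used for \eqref{est:glob hold}.
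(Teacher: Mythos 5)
Your argument is essentially the paper's own proof: decompose $\hat a,\hat b$ and $\langle\bs\gamma^\parallel,\bs T\rangle$ in the eigenbasis $\{\psi_k\}_{|k|\le\bar K_\eps}$, use Lemma \ref{lem phi} to invert mode by mode (the $\eps^2\omega_k^2\le\bar\omega^2$ term absorbed into the $\mathcal O(\bar\omega)$ remainder), bound the coefficients by $|k|^{-\alpha}$ via Proposition \ref{lem apend 1}, and sum using the uniform eigenfunction bounds of Lemma \ref{lemm apend unif}, with the $|k|^{-1}$ decay giving the $|\ln\eps|$ improvement. You in fact supply more detail than the paper does on the passage from the $C^0$ bound to the $C^{2,\alpha}(\Gamma)$ norm, which the paper leaves implicit in ``summing up the modes we conclude.''
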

\begin{proof}
We have
\[
a =\left(\sum_{k=-\bar K_\eps}^{\bar K_\eps}\hat  a_k\right)\psi_k(s)\phi_\mu, \qquad b =\left(\sum_{k=-\bar K_\eps}^{\bar K_\eps}\right)\hat  b_k\psi_k(s)\phi_\xi.
\] 
We also decompose the right hand side
\[
\langle \bs \gamma^\parallel, \bs T\rangle=\sum_{k=-\bar K_\eps}^{\bar K_\eps}\hat  \gamma_k\psi_k(s).
\]
By Lemma \ref{lem phi} we get 
\[
|\hat a_k|+|\hat b_k|\lesssim |\hat  \gamma_k|\lesssim k^{-\alpha}\left\|\langle \bs \gamma^\parallel, \bs T\rangle\right\|_{C^\alpha((0, |\Gamma|))}\lesssim k^{-\alpha}\|\bs \gamma^\parallel\|_{C^\alpha_\theta(\pC)}.
\]
Using Lemma \ref{lemm apend unif} and summing up the modes we conclude. The second statement is proven similarly.

\end{proof}
}
By the above Lemma,   Corollary \ref{lem:c1beta}  we have:
\begin{lemma}\label{lem leading mod}
The system (\ref{sys:muxi0}) has a solution such that 
\begin{equation}
\label{est sys mod0}
\|\mu^0\|_{C^{2,\alpha}(\Gamma)}+\|\xi^0\|_{C^{2,\alpha}(\Gamma)}\lesssim \eps^2 e^{\,2 K\theta|\ln\eps|}|\ln\eps|^7
\end{equation}
{\color{black}{and this  estimate holds as well for $C^{3,\alpha}(\Gamma)$ norms by a  bootstrap argument.}}
\end{lemma}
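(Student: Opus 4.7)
The approach is to read the system \eqref{sys:muxi0} as a linear problem for the modulation parameters $(\mu^0, \xi^0)$ within the finite dimensional ansatz \eqref{hyp:expl mod}, and then apply Lemma \ref{lem:mod 1} with $\bs \gamma^\parallel = \eps \bs \Psi_0^\parallel$. The point is that $\bs \Psi_0$ depends only on already-constructed data: the profile $\bs V$, the curvature $\kappa$, and the matching functions $b,\zeta$ and outer correctors $w_i^j$, all of which enjoy the regularity required by Corollary \ref{cor match} and the smoothness assumed on $\Gamma$.

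First I would bound the right hand side. Corollary \ref{lem:c1beta} applies term by term to $b^{-3}\bigl[\bs\Omega_1(\cdot;0)\chi+\sum_{l=1}^{4}\bs\Theta_l(\cdot;0)\chi+\sum_{i=2,4}\bs j_i(\cdot;0)\bigr]$ (since $b=b_0+\mathcal O(\eps)$ is positive and smooth), so
\[
\|\bs \Psi_0\|_{C^{1,\alpha}_\theta(\pC)}\lesssim \eps\, e^{\,2K\theta|\ln\eps|}|\ln\eps|^5.
\]
Since $P_{\bar K_\eps}$ is a truncated Fourier projection in the $s$-variable on $S_\ell$, it is bounded in $C^{1,\alpha}$ up to a logarithmic loss (uniform bound on $\|\psi_k\|_{C^0}$ from Lemma \ref{lemm apend unif} combined with the standard partial-sum estimate, as in the proof of Proposition~\ref{prop: inner lineal}), giving
\[
\|\bs \Psi_0^\parallel\|_{C^{1,\alpha}_\theta(\pC)}\lesssim |\ln\eps|\,\|\bs \Psi_0\|_{C^{1,\alpha}_\theta(\pC)}.
\]

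Second, I would apply the refined estimate of Lemma \ref{lem:mod 1}, with $\hat\lambda$ chosen large enough (and hence $\bar\omega$ small) so that the $\mathcal O(\bar\omega)$ perturbation in \eqref{eq: lem phi 1} is dominated by the positive constants $\ell_\mu,\ell_\xi$, making the mode by mode inversion
\[
\bigl(\ell_\xi+\mathcal O(\bar\omega)\bigr)\hat\xi^0_k=\eps\,\langle \bs\Psi_0^\parallel,\bs X\rangle_k,\qquad \bigl(\ell_\mu+\mathcal O(\bar\omega)\bigr)\hat\mu^0_k=\eps\,\langle \bs\Psi_0^\parallel,\bs Y\rangle_k,
\]
trivial. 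Chaining the bounds,
\[
\|\mu^0\|_{C^{2,\alpha}(\Gamma)}+\|\xi^0\|_{C^{2,\alpha}(\Gamma)}\lesssim |\ln\eps|\,\|\eps\bs \Psi_0^\parallel\|_{C^{1,\alpha}_\theta(\pC)}\lesssim \eps^2 e^{\,2K\theta|\ln\eps|}|\ln\eps|^7,
\]
which is \eqref{est sys mod0}.

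Finally, for the $C^{3,\alpha}$ statement: because the mode by mode equations above are purely algebraic in $(\hat\mu^0_k,\hat\xi^0_k)$ (the $\mathfrak D\mu$ and $\mathfrak D\xi$ terms in $\hat{\bs\Omega}_0$ disappear after pairing with $\bs X$, $\bs Y$ as shown in Lemma \ref{lem phi}), the $s$-regularity of the solutions is inherited directly from that of the right hand side. Since $\bs \Psi_0\in C^{2,\alpha}_\theta(\pC)$ in the $s$-variable (one checks this from the $C^{3,\alpha}$ regularity of $b,\zeta,\kappa$ and the $w_i^j$), a second application of the same argument with one extra derivative gives the $C^{3,\alpha}$ estimate with the same exponential and logarithmic factors. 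The only genuine subtlety in the whole proof is the choice of $\hat\lambda$ large enough to make the system uniformly invertible; everything else is bookkeeping.
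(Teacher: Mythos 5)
Your proposal is correct and follows essentially the same route as the paper, which proves this lemma simply by combining Corollary \ref{lem:c1beta} (to bound $\bs\Psi_0$ in $C^{1,\alpha}_\theta(\pC)$) with the refined estimate of Lemma \ref{lem:mod 1} (whose invertibility rests on Lemma \ref{lem phi} and the choice of $\hat\lambda$ large), the $C^{3,\alpha}$ statement following by the same mode-by-mode bootstrap you describe. One small imprecision: the $\mathfrak D\mu$, $\mathfrak D\xi$ terms do not disappear after pairing with $\bs X$, $\bs Y$ --- their $\partial_{\tau\tau}$ part produces the constants $\ell_\xi,\ell_\mu$ and their $\eps^2 b^{-2}\partial_{ss}$ part becomes the $\mathcal O(\bar\omega)$ correction via the eigenvalue equation for $\psi_k$ --- but this does not affect your conclusion that the system is diagonal and algebraic mode by mode.
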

Note that this is consistent with (\ref{hyp:size mod}) provided that $K\theta$ is sufficiently small.
Next we look for the modulation functions $\mu$ and $\xi$ 
\[
\mu=\mu^0+\mu^1, \qquad \xi=\xi^0+\xi^1,
\]
where the new unknowns are of the form
\[
\begin{aligned}
&\mu^1(\tau, s)=\hat \mu^1(s)\blu{\phi_{\mu}}(\tau), \qquad \hat \mu^1(s)=\left(\sum_{\blu{k=-\bar K_\eps}}^{\bar K_\eps} \hat \mu^1_{k}\psi_{k}(s)\right), \\
& \xi^1(\tau, s)=\hat \xi^1(s)\blu{\phi_{\xi}}(\tau),\qquad \hat \xi^1(s)=\left(\sum_{\blu{k=-\bar K_\eps} }^{\bar K_\eps} \hat \xi^1_{k}\psi_{k}(s)\right).
\end{aligned}
\]
We will now derive the modulation equation for $\mu^1, \xi^1$. In fact it is an equation of the form (\ref{sys:muxi0}) but with the right  hand side depending on the unknown functions of the problem coupling 
it with the inner and the outer problem described above. Recalling the error terms treated in section \ref{sec. 4.5} we write
\[
P_{\bar K_\eps}  \left[b^{-3} (s) \bs g+b^{-3}(s)\bs i(\bs z)\right]=-\eps^{-1}P_{\bar K_\eps}\hat {\bs{\Omega}}_0(\mu^1, \xi^1)+
{\bs{\Psi}}_1^\parallel,
\]
where 
\[
{\bs{\Psi}}_1^\parallel=P_{\bar K_\eps}\left[b^{-3} (s) \bs g+b^{-3}(s)\bs i(\bs z) -  \eps^{-1}\chi \hat {\bs{\Omega}}_0(\mu, \xi)-\chi \bs\Psi_0\right].
\]
Based on the results of Section \ref{sec. 4.5} the following is straightforward:
\begin{lemma}\label{lem modul 2}
It holds
\begin{equation}\label{est modul 2}
\begin{aligned}
\|{\bs{\Psi}}_1^\parallel\|_{C^\alpha_\theta(\pC)} &\lesssim (1+\eps^{-1-\alpha} e^{\,-(\hat\lambda-\theta)  K|\ln\eps|})\left(\|\mu\|_{C^{2,\alpha}(\Gamma)}+ \|\xi\|_{C^{2,\alpha}(\Gamma)}\right)
\\&\quad + \eps^{-1}\left(\|\mu\|^2_{C^{2,\alpha}(\Gamma)}+\|\xi\|^2_{C^{2,\alpha}(\Gamma)}\right)+ \eps^{-2-\alpha} e^{\,2K\theta|\ln\eps|/\bar m} \sum \|\bar{w}_{ij}\|_{C^{1,\alpha}(\Omega_j)}\\
&\quad +\left(\eps^{-1}+\eps^{-2-\alpha}(\|\mu\|_{C^\alpha(\Gamma)}+\|\xi\|_{C^\alpha(\Gamma)})\right)|\ln\eps|^5\|{\bs z}\|_{C^\alpha_\theta(\pC)}\\
&\quad +\eps^{-3}|\ln\eps|\left(\|{\bs z}\|_{C^\alpha_\theta(\pC)}^2+e^{\,2K\theta|\ln\eps|/\hat m}\sum \|\bar{w}_{ij}\|^2_{C^\alpha(\Omega_j)}\right).
\end{aligned} 
\end{equation}
\end{lemma}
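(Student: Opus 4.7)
The plan is to unpack $\bs\Psi_1^\parallel$ into algebraic pieces and bound each one using the error estimates already established in Section \ref{sec. 4.5}. Using the decomposition $\bs g=\bs E^{in}+\bs j$ with $\bs E^{in}=\bs\Omega_0+\bs\Omega_1+\sum_{l=1}^4\mathbf\Theta_l$ and recalling the definition of $\bs\Psi_0$ from Corollary \ref{lem:c1beta}, I would write
\[
b^{-3}\bs g+b^{-3}\bs i(\bs z)-\eps^{-1}\chi\hat{\bs\Omega}_0(\mu,\xi)-\chi\bs\Psi_0 = A_0+A_1+A_2+A_3,
\]
where $A_0 := b^{-3}\chi\bs\Omega_0-\eps^{-1}\chi\hat{\bs\Omega}_0(\mu,\xi)$ captures the cancellation between the leading modulation contribution inside $\bs\Omega_0$ and the modulation driver $\hat{\bs\Omega}_0$; $A_1$ gathers the Lipschitz differences $b^{-3}\chi(\bs\Omega_1-\bs\Omega_1(\cdot;0))$, $b^{-3}\chi(\mathbf\Theta_l-\mathbf\Theta_l(\cdot;0))$ for $l=1,\dots,4$, and $b^{-3}(\bs j_i-\bs j_i(\cdot;0))$ for $i=2,4$; $A_2 := b^{-3}(\bs j_1+\bs j_3+\bs j_5)$ collects the remaining error terms explicitly depending on $\bs z$ and $\bar{\bs w}$; and $A_3 := b^{-3}\bs i(\bs z)$ is the inner-linearization coupling.

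For $A_0$ I would use the explicit formula \eqref{def omega zero} together with the expansion $\A_\eps^{-1}=1+O(\eps|\ln\eps|)$ on $\supp\chi$: this is precisely the heuristic identity behind the definition of $\hat{\bs\Omega}_0$ in Section \ref{sec:modulation}, so the $\eps^{-1}$-order terms cancel by design and one is left with $\|A_0\|_{C^\alpha_\theta(\pC)}\lesssim \eps|\ln\eps|(\|\mu\|_{C^{2,\alpha}(\Gamma)}+\|\xi\|_{C^{2,\alpha}(\Gamma)})$, which is absorbed into the leading coefficient of the stated bound. For $A_1$, applying the Lipschitz estimates of Lemma \ref{lem:est inner error} and Corollary \ref{cor:lip g}, the differences $\bs\Omega_1-\bs\Omega_1(\cdot;0)$, $\mathbf\Theta_1-\mathbf\Theta_1(\cdot;0)$ and $\mathbf\Theta_2-\mathbf\Theta_2(\cdot;0)$ generate the $O(1)$ coefficient in front of $\|\mu\|_{C^{2,\alpha}}+\|\xi\|_{C^{2,\alpha}}$, $\mathbf\Theta_3-\mathbf\Theta_3(\cdot;0)$ generates the quadratic $\eps^{-1}(\|\mu\|^2+\|\xi\|^2)$ term, and $\bs j_4-\bs j_4(\cdot;0)$ (together with $\bs j_2-\bs j_2(\cdot;0)$) produces the exponentially small coefficient $\eps^{-1-\alpha}e^{-(\hat\lambda-\theta)K|\ln\eps|}$ multiplying $\|\mu\|+\|\xi\|$.

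The piece $A_2$ is bounded directly by the three bounds for $\bs j_1,\bs j_3,\bs j_5$ in Lemma \ref{lem:est inner error 2}, producing respectively the couplings $(\eps^{-1}+\eps^{-2-\alpha}(\|\mu\|+\|\xi\|))|\ln\eps|^5\|\bs z\|$, $\eps^{-3}|\ln\eps|(\|\bs z\|^2+\sum\|\bar w_{ij}\|^2)$, and $\eps^{-2-\alpha}e^{2K\theta|\ln\eps|/\bar m}\sum\|\bar w_{ij}\|_{C^{1,\alpha}}$. The piece $A_3$ is controlled by Lemma \ref{lem:est inner error 3}, which yields $\eps^{-1}|\ln\eps|^2(1+\eps^{-1}(\|\mu\|+\|\xi\|))\|\bs z\|_{C^{2,\alpha}_\theta}$, absorbed into the $\bs z$-dependent part of \eqref{est modul 2}. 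Summing these four contributions by the triangle inequality and using boundedness of the projection $P_{\bar K_\eps}$ on $C^\alpha_\theta(\pC)$ uniformly in $\eps$ (a consequence of the uniform estimates on the eigenfunctions $\psi_k$ for $|k|\le\bar K_\eps$ in Lemma \ref{lemm apend unif}) completes the argument.

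The only non-routine step is verifying the algebraic cancellation defining $A_0$, which is the raison d'être of the modulation driver $\hat{\bs\Omega}_0$ from Section \ref{sec:modulation}; once this is checked term by term against \eqref{def omega zero}, the rest of the proof is systematic bookkeeping using the Lipschitz and boundedness estimates already at our disposal.
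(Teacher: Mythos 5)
Your proposal is correct and follows essentially the same route as the paper: your pieces $A_0,A_1,A_2,A_3$ are exactly the paper's splitting of $\bs\Psi_1^\parallel$ into $\bs\Psi_{11}^\parallel$ (the cancellation of $b^{-3}\bs\Omega_0$ against $\eps^{-1}\chi\hat{\bs\Omega}_0$, leaving only the $(\A_\eps^{-1}-1)$ remainders), $\bs\Psi_{12}^\parallel$ (the $\mu,\xi$-dependent parts of $\bs\Omega_1$ and the $\bs\Theta_l$ surviving the subtraction of $\bs\Psi_0$), $\bs\Psi_{13}^\parallel$ (the contributions of $\bs j$), and $\bs\Psi_{14}^\parallel$ ($b^{-3}\bs i(\bs z)$), each bounded by the same Lemmas \ref{lem:est inner error}, \ref{lem:est inner error 2}, \ref{lem:est inner error 3}. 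The only cosmetic point is the sign in $A_0$ (the paper's displayed definition of $\bs\Psi_1^\parallel$ and its proof differ by a sign in front of $\eps^{-1}\chi\hat{\bs\Omega}_0$); the cancellation you invoke is the intended one and is the one used in the paper's proof.
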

\begin{proof}
We have $\mu=\mu^0+\mu^1$, $\xi=\xi^0+\xi^1$, with $\mu^0$, $\xi^0$ already determined and satisfying (\ref{est sys mod0}). 
We will estimate few  terms in ${\bs{\Psi}}_1^\parallel$ leaving the rest for the reader. For example using (\ref{def omega zero})  we have
\[
\bs \Psi^\parallel_{11}=P_{\bar K_\eps}\left[ b^{-3}(s)\bs \Omega_0(\mu, \xi)+\eps^{-1}\chi\hat {\bs{\Omega}}_0(\mu, \xi)\right]=P_{\bar K_\eps}\left[ \eps b^{-2} \bs Y (\A_\eps^{-1}-1)\partial_{ss} \mu+\eps b^{-2} \bs X(\A_\eps^{-1}-1)\partial_{ss}\xi\right]
\]
hence
\[
\| {\bs{\Psi}}_{11}^\parallel\|_{C^\alpha_\theta(\pC)} \lesssim \eps^2 (\|\mu\|_{C^{2,\alpha}(\Gamma)}+\|\xi\|_{C^{2,\alpha}(\Gamma)}).
\]
Next we consider the remaining terms treated in Lemma \ref{lem:est inner error}. We observe that all terms that do not depend on $\mu$ or $\xi$ on the right hands are removed from $\bs \Psi^\parallel_1$ by subtracting $\bs \Psi_0$.  The only complicated term is the one involving $\eps^{-1}\|\mu\|_{C^{0,\alpha}(\Gamma)}$ in  $\bs \Theta_3$ but this term is part of the operator $\hat{\bs\Omega}_0$.  In all, denoting by $\bs \Psi_{12}^\parallel$ the terms coming from $\bs \Omega_1$ and $\bs\Theta_j$, $j=1,\dots, 4$ we get
\[
\|{\bs{\Psi}}_{12}^\parallel\|_{C^\alpha_\theta(\pC)} \lesssim \left(\|\mu\|_{C^{2,\alpha}(\Gamma)}+ \|\xi\|_{C^{2,\alpha}(\Gamma)}\right)+ \eps^{-1}\left(\|\mu\|^2_{C^{2,\alpha}(\Gamma)}+\|\xi\|^2_{C^{2,\alpha}(\Gamma)}\right).
\]
We turn our attention to Lemma \ref{lem:est inner error 2}. Again, subtraction of $\bs \Psi_0$ removes some terms from $\bs j$ and denoting what is left by $\bs \Psi^\parallel_{13}$ we have
\[
\begin{aligned}
\|{\bs{\Psi}}_{13}^\parallel\|_{C^\alpha_\theta(\pC)}&\lesssim \eps^{-1-\alpha} e^{\,-(\hat\lambda-\theta)  K|\ln\eps|}\left(\|\mu\|_{C^{1,\alpha}(\Gamma)}+\|\xi\|_{C^{1,\alpha}(\Gamma)}\right)
\\
&\quad + \eps^{-2-\alpha} e^{\,2K\theta|\ln\eps|/\bar m} \sum \|\bar{w}_{ij}\|_{C^{1,\alpha}(\Omega_j)}
\\
&\quad +\left(\eps^{-1}+\eps^{-2-\alpha}(\|\mu\|_{C^\alpha(\Gamma)}+\|\xi\|_{C^\alpha(\Gamma)})\right)|\ln\eps|^5\|{\bs z}\|_{C^\alpha_\theta(\pC)}\\
&\quad +\eps^{-3}|\ln\eps|\left(\|{\bs z}\|_{C^\alpha_\theta(\pC)}^2+e^{\,2K\theta|\ln\eps|/\hat m}\sum \|\bar{w}_{ij}\|^2_{C^\alpha(\Omega_j)}\right).
\end{aligned}
\]
Finally, denoting by $\bs \Psi_{14}^\parallel$ the projection of $\bs i$ (Lemma \ref{lem:est inner error 3}) we get
\[
\|{\bs{\Psi}}_{14}^\parallel\|_{C^\alpha_\theta(\pC)}\lesssim \eps^{-1}|\ln\eps|^2\left(1+\eps^{-1}\|\mu\|_{C^\alpha(\Gamma)}+\eps^{-1}\|\xi\|_{C^\alpha(\Gamma)}\right)\|{\bs z}\|_{C^\alpha_\theta(\pC)}.
\]
This ends the proof.
\end{proof}
For later purpose we state the following
\begin{corollary}\label{cor modul 2}
Estimate (\ref{est modul 2}) holds with $\bs \Psi_1^\parallel$ replaced by 
\[
\bs \Psi_1=b^{-3} (s) \bs g+b^{-3}(s)\bs i(\bs z) +  \eps^{-1}\chi \hat {\bs{\Omega}}_0(\mu, \xi)-\chi \bs\Psi_0.
\]
\end{corollary}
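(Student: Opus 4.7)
The plan is to inspect the proof of Lemma \ref{lem modul 2} and observe that the estimates derived there never actually exploited the presence of the projection $P_{\bar K_\eps}$. Indeed, each of the pieces $\bs\Psi^\parallel_{1j}$, $j=1,\ldots,4$, was bounded by first estimating the full (unprojected) expression in $C^\alpha_\theta(\pC)$ and only at the very end invoking the trivial fact that $P_{\bar K_\eps}$ does not increase the norm. Therefore the same bounds apply verbatim to the unprojected quantity $\bs\Psi_1$.

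More concretely, I would revisit each of the four contributions. For the piece coming from $\bs\Omega_0$, the identity used in Lemma \ref{lem modul 2} is an exact algebraic identity
\[
b^{-3}\bs\Omega_0(\mu,\xi)+\eps^{-1}\chi\hat{\bs\Omega}_0(\mu,\xi)=\eps b^{-2}\bs Y(\A_\eps^{-1}-1)\partial_{ss}\mu+\eps b^{-2}\bs X(\A_\eps^{-1}-1)\partial_{ss}\xi,
\]
whose right hand side is controlled by $\eps^2(\|\mu\|_{C^{2,\alpha}(\Gamma)}+\|\xi\|_{C^{2,\alpha}(\Gamma)})$ in $C^\alpha_\theta(\pC)$ without any reference to modes. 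For the terms in $\bs\Omega_1$ and $\bs\Theta_1,\ldots,\bs\Theta_4$ the subtraction of $\chi\bs\Psi_0$ removes exactly those contributions independent of $(\mu,\xi)$, and the remaining pieces are estimated pointwise in $C^\alpha_\theta(\pC)$ by Lemma \ref{lem:est inner error}. Similarly, for the terms grouped into $\bs j$ and into $\bs i(\bs z)$ the bounds come from Lemmas \ref{lem:est inner error 2} and \ref{lem:est inner error 3} and all of them are stated directly in $C^\alpha_\theta(\pC)$.

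Summing these four contributions reproduces each of the four groups of terms appearing on the right-hand side of (\ref{est modul 2}). The only step that deserves a short verification is that the projection $P_{\bar K_\eps}$, which acts on the $s$-variable through the expansion in the eigenfunctions $\psi_k$, is uniformly bounded on $C^\alpha_\theta(\pC)$; this follows from the uniform bound on $\psi_k$ for $|k|\le \bar K_\eps$ recorded in Lemma \ref{lemm apend unif} together with the decay of the $C^\alpha$ Fourier coefficients, so in particular the unprojected norm and the projected norm differ at most by a constant. Consequently, every inequality proved for $\bs\Psi^\parallel_1$ in Lemma \ref{lem modul 2} holds for $\bs\Psi_1$ with the same right-hand side, which is exactly the content of the corollary. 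The only potential pitfall to monitor is bookkeeping: making sure that the cancellation $\eps^{-1}\chi\hat{\bs\Omega}_0(\mu,\xi)$ is retained (it cancels the most singular part of $b^{-3}\bs\Omega_0$) and that $\chi\bs\Psi_0$ removes the $(\mu,\xi)$-independent source terms, so that no spurious $\mathcal O(\eps e^{2K\theta|\ln\eps|}|\ln\eps|^q)$ contribution is left over.
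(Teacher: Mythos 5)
Your proposal is correct and matches the paper's (implicit) justification: the corollary is stated without proof precisely because every bound in the proof of Lemma \ref{lem modul 2} is obtained by estimating the full expressions term by term in $C^\alpha_\theta(\pC)$ — the exact cancellation of $\eps^{-1}\chi\hat{\bs\Omega}_0$ against the singular part of $b^{-3}\bs\Omega_0$, the removal of the $(\mu,\xi)$-independent sources by $\chi\bs\Psi_0$, and the bounds of Lemmas \ref{lem:est inner error}, \ref{lem:est inner error 2}, \ref{lem:est inner error 3} — so the same estimates hold for the unprojected $\bs\Psi_1$. One caution: your closing claim that $P_{\bar K_\eps}$ is uniformly bounded on $C^\alpha_\theta(\pC)$ does not follow from Lemma \ref{lemm apend unif} plus the $|k|^{-\alpha}$ decay of the coefficients (summing $\bar K_\eps\sim\eps^{-1}$ modes that way produces a factor $\eps^{\alpha-1}$, not a constant); fortunately this claim is unnecessary here, since for $\bs\Psi_1$ no projection appears and the estimates are applied directly to the unprojected terms, so that sentence should simply be dropped.
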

We state the nonlinear system for the modulation functions $\mu^1$, $\xi^1$:
\begin{equation}\label{fixed point 1}
\begin{aligned}
\langle \chi\hat{\bs \Omega}_0(\mu^1, \xi^1), \bs X\rangle&=\eps \langle \bs \Psi^\parallel_1, \bs X\rangle\\
\langle \chi \hat{\bs \Omega}_0(\mu^1,\xi^1), \bs Y\rangle &=\eps \langle \bs \Psi^\parallel_1, \bs Y\rangle
\end{aligned}
\end{equation}

\subsection{Conclusion of the proof}\label{sec fixed point 2}
It this section we will parallel the developments of the previous section: we will first improve the inner approximation and then state the nonlinear problem for the correction. 
We look for the solution of (\ref{eq:inner_1}) in the form $\bs z=\bs z^0+\bs z^1$ where 
\begin{equation}\label{eq: zzero}
\mathfrak L \bs z^0=\eps^2\left(-\eps^{-1}\chi\hat{\bs \Omega}_0(\mu^0, \xi^0)+\bs \Psi_0\right).
\end{equation}
Observe that the right hand side is of class $C^{1,\beta}$ hence we can rely on (\ref{est:glob hold}) and (\ref{est:glob hold_2})   of Proposition \ref{prop: inner lineal} to obtain
\begin{equation}\label{est:zzero}
\|\bs z^0\|_{C^{2,\alpha}_\theta(\pC)}\lesssim \eps^2|\ln\eps|\left(\|\chi\hat{\bs \Omega}_0(\mu^0, \xi^0)\|_{C^{1,\alpha}(\pC)}+\|\bs \Psi_0\|_{C^{1,\alpha}(\pC)}\right)\lesssim \eps^3e^{\,2 K\theta|\ln\eps|}|\ln\eps|^{10}.
\end{equation}
Next we write the nonlinear problem for the correction $\bs z^1$:
\begin{equation}\label{eq:zone}
\mathfrak L \bs z^1=\eps^2\left(-\eps^{-1}\hat {\bs{\Omega}}_0(\mu^1, \xi^1)+{\bs{\Psi}}_1\right).
\end{equation}
To complete the nonlinear system we need to solve to finish the proof we restate the outer equation in (\ref{sys:O3}) in the form (\ref{eq:outer lin}) (see also Section \ref{sec exp non})
\begin{equation}
\label{eq: barw outer lin}
\begin{aligned}
-\Delta \bar w_i-f_{u}(w^0_i,x) \mathbbm{1}_{\Omega_{i}} \bar w_i+\eps \left(\bs{q}^{1}\cdot\bar{\bs{w}}\right)_i&= (1-\chi) h_i,\quad\mbox{in}\quad \Omega\setminus \Gamma,\\
 \bar w_i&=0, \quad \hbox{on}\quad  \partial\Omega. \\
\end{aligned}
\end{equation} 
The right hand sides of this system are described and estimated in Lemma \ref{lem out 2}. To solve the outer problem we use the results of Lemma \ref{lem out 1}.

Next we solve the system (\ref{fixed point 1}),  (\ref{eq:zone}),   (\ref{eq: barw outer lin}) using the  Banach Fixed Point Theorem. To this end we suppose that 
we are given the modulation functions $\tilde\mu, \tilde\xi$ satisfying (\ref{hyp:size mod}), the inner function $\tilde{\bs z}$ satisfying (\ref{hyp:size v}) and the outer function $\tilde{\bs w}=(\tilde w_1, \tilde w_2)$ satisfying (\ref{hyp:size w}). We replace $\mu$, $\xi$ and $\bs z$  on the right hand sides of the system by $\mu^0+\tilde \mu$, $\xi^0+\tilde \xi$ and $\bs z^0+\tilde{\bs z}$. Taking into account the $\mu^0$ and $\xi^0$ satisfy (\ref{est sys mod0}) and $\bs z^0$ satisfies (\ref{est:zzero}) and using the theory developed above (Proposition \ref{prop: inner lineal}, Lemma \ref{lem out 1} and Lemma \ref{lem:mod 1}) we obtain with $\alpha\in (0, 1)$
\begin{equation}
\label{est:fix point_1}
\begin{aligned}
\|\mu^1\|_{C^{2,\alpha}(\Gamma)}+\|\xi^1\|_{C^{2,\alpha}(\Gamma)} &\lesssim\eps^{\alpha}\Big\{\eps^{-1-\alpha} e^{\,-(\hat\lambda-\theta)K|\ln\eps|}(\eps^2 e^{\,2K\theta|\ln\eps|}|\ln\eps|^7+\eps^{2-\bar\varsigma})
\\ &\qquad\quad +\eps^3 e^{\,4K\theta|\ln\eps|}|\ln\eps|^{14}+\eps^{3-\hat\varsigma}+\eps^{2+\bar\varsigma-\alpha}e^{\,2K\theta|\ln\eps|/\bar m}\\
&\qquad\quad +(\eps^{-1}+\eps^{-\alpha}e^{\,2K\theta|\ln\eps|}|\ln\eps|^7)(\eps^3e^{\,2K\theta|\ln\eps|}|\ln\eps|^{10}+\eps^{2+\tilde\varsigma})|\ln\eps|^5
\\
&\qquad\quad +|\ln\eps|(\eps^3 e^{\,4K\theta|\ln\eps|}|\ln\eps|^{10}+\eps^{1+2\tilde\varsigma}+\eps^{5+2\bar\varsigma}e^{\,2K\theta|\ln\eps|/\hat m}\Big\}.
\end{aligned}
\end{equation}
The first line above can be made as small as we wish by choosing $\hat\lambda$ large. The second, the  third and the fourth lines can be  made of
size $o(\eps^2)$ if $\theta$ is chosen small and 
\begin{equation}
\label{par restr 1}
\alpha-\tilde\varsigma>-1, \qquad \tilde\varsigma+\hat\varsigma+\alpha>1, \qquad 2\tilde\varsigma+\alpha>1.
\end{equation}
Based on Corollary \ref{cor modul 2}  we see that  under the same restrictions 
\begin{equation}
\label{est:fix point_2}
\begin{aligned}
\|\bs z^1\|_{C^{2,\alpha}_\theta(\pC)}&\lesssim o(\eps^3).
\end{aligned}
\end{equation}
To estimate the size of the outer solution we rely  on Lemma \ref{lem out 1} and Lemma \ref{lem out 2}: 
\begin{equation}
\label{est:fix point_3}
\begin{aligned}
\|\bar w_i\|_{C^{1,\alpha}(\Omega_i)}+\|\bar w_i\|_{C^{1,\alpha}(\Omega_{3-i})}&\lesssim (\eps|\ln\eps|)^{5-\alpha}+\eps^{4-\alpha+2\bar\varsigma}\\
&\quad + (\eps^{\tilde\varsigma-\alpha}+\eps^3 e^{\,2K\theta|\ln\eps|}|\ln\eps|^{10})(e^{\,-2K\theta|\ln\eps|/\tilde m}+e^{\,-2K\theta|\ln\eps|/\hat m}).
\end{aligned}
\end{equation}
for $i=1,2$. The first two terms are of order $o(\eps^{4+\bar\varsigma})$ when 
\begin{equation}
\label{par restr 2}
1-\alpha>\bar\varsigma, \qquad \bar\varsigma>\alpha,
\end{equation}
and the second line can be made of the same size by choosing $\tilde m<\hat m<1$ small. It is easily checked that conditions (\ref{par restr 1}) and  (\ref{par restr 2}) are satisfied when 
\begin{equation}
\label{par restr 3}
1>\alpha+\bar\varsigma, \qquad \bar\varsigma>\alpha,\qquad  \tilde\varsigma+\hat\varsigma>1-\alpha, \qquad \tilde\varsigma>\frac{1-\alpha}{2}.
\end{equation}
These conditions should be satisfied together with 
\begin{equation}
\label{par restr 4}
\eps^2e^{\,2 K\theta|\ln\eps|}|\ln\eps|^{10}\lesssim o(\eps^{2-\hat\varsigma}), \qquad \eps^3e^{\,2 K\theta|\ln\eps|}|\ln\eps|^{10}\lesssim o(\eps^{2+\tilde\varsigma}).
\end{equation}
Explicitly we can chose $\alpha$ smaller than but close to $1/2$ and then $\bar\varsigma$ larger than, close to $1/2$ and $\hat\varsigma, \tilde\varsigma$ larger than $1/4$ but smaller than $1/2$. In fact it can be checked that for any $\alpha\in (0,1/2)$ it is possible to chose the other constants so that (\ref{par restr 4}) holds.  Finally we fix $K$ and chose $\theta$ small to satisfy (\ref{par restr 4}).
Summarizing we see that the nonlinear map
\[
(\tilde\mu,\tilde\xi, \tilde{\bs z}, \tilde{\bs w})\longmapsto (\mu^1,\xi^1, \bs z^1, \bar{\bs w})
\]
defined by (\ref{fixed point 1}),  (\ref{eq:zone}),   (\ref{eq: barw outer lin}) transforms the set
\[
\bs B(\hat\varsigma, \tilde\varsigma, \bar\varsigma)=\{(\tilde\mu,\tilde\xi, \tilde{\bs z}, \tilde{\bs w})\ \mbox{satisfying (\ref{hyp:size mod}), (\ref{hyp:size v}),(\ref{hyp:size w})} \},
\]
into itself. It remains to check that this map is also a contraction. This is standard given the results of Corollaries \ref{cor:lipschitz 2}, \ref{cor:lip g}  and Lemmas \ref{lem:est inner error}, \ref{lem:est inner error 3}. {\color{black}{We omit somewhat tedious details.}} To conclude the proof of the theorem we need to show (\ref{est: calpha}) and (\ref{est: c2alpha}). By construction of the solution it is evident that it suffices to show that corresponding facts replacing the function $\bs u=(u_1,u_2)$ by the approximate solution $\bs U=(U_1, U_2)$.
Straightforward direct argument using the explicit definition of the approximate solution then shows the required estimates. This ends the proof of the theorem.  
}}

\blu{

\section{Proof of Lemma \ref{lem phi}}\label{pr lem phi}

We have
\[
\begin{aligned}
\langle \hat {\bs\Omega}_{0}(\mu, \xi), \bs X\rangle&=\langle\bs X \mathfrak D\xi,\bs X\rangle+\langle\bs Y \mathfrak D \mu,\bs X\rangle - 4\langle \bs X \partial_\tau \mu,\bs X\rangle \\
&\qquad \blu{+}2\langle \bs V'' \mu,\bs X\rangle +2\langle \bs V''\tau\partial_\tau \mu,\bs X\rangle+ 2\langle \bs V''\partial_\tau\xi,\bs X\rangle.
\end{aligned}
\]
We observe that because $V_1(\tau)=V_2(-\tau)$ the product $\bs X\cdot\bs Y$ is odd while functions $\phi_\mu$ and $\phi_\xi$ are even. From this we see that 
\[
\langle \hat {\bs\Omega}_{0}(\mu, \xi), \bs X\rangle=\langle\bs X \mathfrak D\xi,\bs X\rangle+2\langle \bs V''\partial_\tau\xi,\bs X\rangle
\]
We have 
\[
\begin{aligned}
\langle\bs X \mathfrak D\xi,\bs X\rangle&=-\hat\xi\langle \bs X \phi_\xi'', \bs X\rangle +\mathcal O(\bar \omega)\hat \xi=-\hat\xi\int (|X|^2)_{\tau\tau}\phi_\xi+\mathcal O(\bar \omega)\hat \xi\\
&=-2\hat\xi\int(V_1''V_1'+V_2'' V_1')_\tau\phi_\mu+\mathcal O(\bar \omega)\hat \xi
=-2\hat\xi\int(\bs V''\cdot \bs X)_\tau \phi_\xi+\mathcal O(\bar \omega)\hat \xi.
\end{aligned}
\]
It follows 
\[
\langle \hat {\bs\Omega}_{0}(\mu, \xi), \bs X\rangle=-4\hat\xi \int(\bs V''\cdot \bs X)_\tau \phi_\xi+\mathcal O(\bar \omega)\hat \xi.
\]
We set
\[
\phi_\xi=-(\bs V''\cdot \bs X)_\tau.
\]
We get explicitly
\[
\int(\bs V''\cdot \bs X)_\tau \phi_\xi=\int|(V_1V_1' V_2^2+V_2 V_2' V_1^2)_\tau|^2\,d\tau>0
\]
provided that $\phi_\xi\not\equiv 0$. 
To show that $\phi_\xi\not\equiv 0$ we note that 
\[
\phi_\xi(0)=-2(1-V'(0)^2). 
\]
In \cite{sourdis 1} it was shown that $V'_1(\infty)^2=\frac{1}{2}$. Using the equation it is not hard to show that
\[
V_1'(0)^2=\frac{1}{2}(1+V_1'(\infty)^2)=\frac{1}{2}\left(1+\frac{1}{2}\right)<1.
\] 
hence $\phi_\xi(0)<0$. Note that 
\[
|\phi_\xi(\tau)|\lesssim e^{\,-c|\tau|^2}.
\]
This ends the proof of the first identity in (\ref{eq: lem phi 1}). 

Next we will show the second identity in (\ref{eq: lem phi 1}). Similarly as above
\[
\begin{aligned}
\langle \hat {\bs\Omega}_{0}(\mu, \xi), \bs Y\rangle&=\langle\bs X \mathfrak D\xi,\bs Y\rangle+\langle\bs Y \mathfrak D \mu,\bs Y\rangle - 4\langle \bs X \partial_\tau \mu,\bs Y\rangle \\&\qquad \blu{+}2\langle \bs V'' \mu,\bs Y\rangle +2\langle \bs V''\tau\partial_\tau \mu,\bs Y\rangle+ 2\langle \bs V''\partial_\tau\xi,\bs Y\rangle\\
&=\hat\mu\left(-\int (|\bs Y|^2)_{\tau\tau}\phi_\mu +\int (\bs X\cdot \bs Y)_\tau \phi_\mu+2\int \bs V''\cdot\bs Y \phi_\mu- 2\int (\tau \bs V''\cdot Y)_\tau \phi_\mu\right)\\
&\qquad +\mathcal O(\bar \omega)\hat\mu.
\end{aligned}
\]
We set 
\[
\phi_\mu(\tau) =\left(- (|\bs Y|^2)_{\tau\tau}+(\bs X\cdot \bs Y)_\tau+2\bs V''\cdot\bs Y-2(\tau \bs V''\cdot \bs Y)_\tau\right)\sech(\hat\lambda \tau).
\]
With this definition $\phi_\mu$ is even by the symmetry of $\bs V$. To show that $\phi_\mu\not\equiv 0$ we note that after some elementary calculations we get
\[
- (|\bs Y|^2)_{\tau\tau}+(\bs X\cdot \bs Y)_\tau+2\bs V''\cdot\bs Y-2(\tau \bs V''\cdot \bs Y)_\tau=-4 |\bs V'(\tau)|^2+\mathcal O(e^{\,-c|\tau|^2}),
\]
hence for $\tau\to \infty$ 
\[
\phi_\mu(\tau)=-4 |\bs V'(\infty)|^2\sech(\hat\lambda \tau)+\mathcal O(e^{\,-c|\tau|^2})=-2\sech(\hat\lambda \tau)+\mathcal O(e^{\,-c|\tau|^2})\not\equiv 0.
\]
This end the proof of the Lemma. 
}
\section{The rate of decay of Fourier modes for  H\"older continuous functions}\label{sec: 5}
\setcounter{equation}{0}
Using  Liouville Normal Form of the Sturm-Liouville operator 
\[
y(s)=\int_0^s {b_0(\sigma)}\,d\sigma, \qquad \tilde \psi(y)=\psi(s(y))\sqrt{b_0(s(y))}, \qquad \tilde q =\frac{{\frac{d^2}{dy^2}} \sqrt{b_0(s(y))}}{\sqrt{b_0(s(y))}},
\]
we can transform  (\ref{eq: per 1}) to 
\[
-\partial_{yy}\tilde \psi+ \tilde q \tilde \psi=\omega^2\tilde \psi, \qquad y\in [0, \tilde \ell], \qquad \tilde \ell =\int_0^{|\Gamma|} {b_0(\sigma)}\,d\sigma
\]
with periodic boundary conditions. Under this change of variables, for any two functions $u_1, u_2$ in $[0, |\Gamma|]$ we have
\[
\int_0^{|\Gamma|} u_1(s) u_2(s)\,b^2_0(s)ds=\int_0^{\tilde\ell} \tilde u(y)\tilde v(y)\,dy, \qquad \tilde u_j(y)=u_j(s(y))\sqrt{b_0(s(y))}.
\]

Consider, more generally,  the following operator $L_q=-\partial_{ss}+ q(s)$ in $\mathbb S^1$, and let us assume that $q$ is such that $L_q\geq 0$. 
Let $\omega_k^2\geq 0$, {\color{black}{$k\in \mathbb Z$,}} be the eigenvalues and $\psi_k$ the eigenfunctions of $L_q$.  Note that  by the normal form transformation and possible further scaling of the independent variables (\ref{eq: per 1}) becomes an eigenvalue 
problem for the operator of the form $L_q$. From now on we will consider the operator $L_q$ on $\mathbb S^1$. From the preceding discusion the result of Proposition \ref{lem apend 1} (below) applies in the original case. 

{\color{black}{
The following summarizes the contents of Lemma 1.2.1 and Corollary 1.4.1 in \cite{leviatan}
\begin{lemma}\label{lemm apend unif}
Let $\omega_k$ be the eigenvalues and $\psi_k$ normalized eigenfunctions of $L_q$. 
\begin{itemize}
\item[(i)]
The following asymptotic formula holds
\[
\omega_k^2= k^2+\mathcal O(k^{-2}), \quad k\in \mathbb Z
\]
as $k\to \infty$.
\item[(ii)]
\[
\psi_k=a_k \cos\omega_k x+{b_k}\sin \omega_k x+\mathcal O(|\omega_k|^{-1}),
\]
where $a_k=\mathcal O(1)$ and $b_k=\mathcal O(1)$. 
\end{itemize}
\end{lemma}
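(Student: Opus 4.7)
The plan is to carry out a classical asymptotic analysis of the Hill operator $L_q$ via the Pr\"ufer transformation. For any $\omega>0$ and any solution $\psi$ of $-\psi'' + q\psi = \omega^2\psi$, I would introduce
\[
\psi(s) = r(s)\cos\theta(s), \qquad \psi'(s) = -\omega r(s)\sin\theta(s),
\]
so that the Pr\"ufer phase and amplitude obey the decoupled first-order equations
\[
\theta'(s) = \omega - \frac{q(s)}{\omega}\cos^2\theta(s), \qquad (\log r)'(s) = \frac{q(s)}{2\omega}\sin 2\theta(s).
\]

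For part (i), I would analyse the phase increment $\Theta(\omega):=\theta(2\pi;\omega)-\theta(0;\omega)$. A first crude integration gives $\Theta(\omega) = 2\pi\omega + \mathcal O(\omega^{-1})$; plugging the rough ansatz $\theta(s) = \omega s + \mathcal O(\omega^{-1})$ back into the $\cos^2\theta$ term and then integrating by parts the oscillatory factor $\cos(2\omega s + \cdot)$ — legitimate because $q$ inherits enough regularity from $b_0$ through the Liouville normal-form reduction — upgrades the remainder to $\mathcal O(\omega^{-2})$. The periodic spectrum of $L_q$ is characterised by the Floquet discriminant equalling $2$, and analysing the $2\times 2$ monodromy matrix in Pr\"ufer variables reduces this to a quantization condition of the form $\Theta(\omega)\in\pi\mathbb Z$, with successive integers producing the near-degenerate pair structure $\omega_{2k},\omega_{2k-1}$ already visible in (\ref{eq: per 2}). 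Inverting the quantization relation yields $\omega_k = k + \mathcal O(k^{-3})$ once the mean-value correction is absorbed into the Liouville rescaling, and squaring gives $\omega_k^2 = k^2 + \mathcal O(k^{-2})$.

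For part (ii), once the $\omega_k$ are accurately located I would write the normalized periodic eigenfunction via variation of constants against the free equation $-\psi'' = \omega_k^2 \psi$:
\[
\psi_k(s) = a_k\cos(\omega_k s) + b_k\sin(\omega_k s) + \frac{1}{\omega_k}\int_0^s \sin\bigl(\omega_k(s-\sigma)\bigr)\, q(\sigma)\psi_k(\sigma)\, d\sigma,
\]
with $a_k,b_k$ determined by the periodicity conditions. The amplitude equation above yields $|\log r(s)-\log r(0)|\lesssim \omega_k^{-1}$, so $\|\psi_k\|_\infty\lesssim 1$ after $L^2$-normalization, whence the integral term is uniformly $\mathcal O(\omega_k^{-1})$; the periodic matching then forces $a_k,b_k=\mathcal O(1)$, giving (ii).

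The genuinely delicate step is the sharpening from $\mathcal O(\omega^{-1})$ to $\mathcal O(\omega^{-2})$ in the phase monodromy: it requires combining one careful integration by parts with the correct bookkeeping of the two-dimensional Floquet monodromy, so that the mean-value contribution of $q$ does not contaminate the leading-order correction to $\omega_k^2$. Everything else is standard Sturm--Liouville perturbation theory, which is why the lemma is simply quoted from \cite{leviatan}.
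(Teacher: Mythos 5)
The paper itself offers no proof of this lemma: it is quoted from Levitan--Sargsjan (Lemma 1.2.1 and Corollary 1.4.1), where the argument runs through the asymptotics of a fundamental system obtained from the Volterra (variation-of-constants) integral equation and the characteristic equation for the periodic problem. Your part (ii) is essentially that argument: the integral identity you write, together with the amplitude bound $|\log r(s)-\log r(0)|=\mathcal O(\omega_k^{-1})$ and $L^2$-normalization, does give $\|\psi_k\|_\infty=\mathcal O(1)$ and the stated form with $a_k,b_k=\mathcal O(1)$ (the sign in your amplitude equation should be $(\log r)'=-\frac{q}{2\omega}\sin 2\theta$, which is immaterial). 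So (ii) is fine.

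Part (i), as sketched, has two genuine gaps. First, the mean value of $q$ cannot be ``absorbed into the Liouville rescaling'': here $q=(\sqrt{b_0})''/\sqrt{b_0}$, so $\int q = \int ((\sqrt{b_0})')^2/b_0>0$ unless $b_0$ is constant, and no rescaling of the independent variable removes an additive constant from the potential. Your own phase computation gives $\Theta(\omega)=2\pi\omega-\frac{1}{2\omega}\int q+\mathcal O(\omega^{-2})$, which upon inversion yields $\omega_k=k+\frac{\bar q}{4\pi k}+\dots$, i.e.\ an $\mathcal O(1)$ shift of $\omega_k^2$ by the mean $\bar q$; the clean remainder $\mathcal O(k^{-2})$ requires first normalizing $\bar q=0$ by shifting the spectral parameter (harmless for the way the lemma is used, but not what you wrote). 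Second, the quantization step is not justified: for periodic boundary conditions the eigenvalue condition is that the Floquet discriminant equals $2$, and the discriminant equals $2\cos\Theta(\omega)$ only up to error terms; since $\pm 2$ are quadratically degenerate values of the cosine, an $\mathcal O(\omega^{-2})$ error in the discriminant localizes $\Theta$ modulo $2\pi$ only to within $\mathcal O(\omega^{-1})$, so the claimed $\omega_k=k+\mathcal O(k^{-3})$ (indeed even $\omega_k^2=k^2+\bar q+\mathcal O(k^{-2})$) does not follow from the two Pr\"ufer estimates you display. Recovering the sharp remainder requires the finer structure of the $2\times 2$ monodromy (its off-diagonal entries, governed by Fourier coefficients of $q$), or equivalently the asymptotic fundamental system and characteristic determinant as in the cited source --- precisely the ``delicate step'' you flag but do not carry out.
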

For any $u\in L^2(\ES^1)$ we set
\[
\hat u_k=\int_{\ES^1} u(x)\psi_k(x)\, dx.
\]
Now we prove:
\begin{proposition}\label{lem apend 1}
For any $u\in C^{\alpha}(\ES^1)$,  $\alpha\in (0,1)$  it holds
\[
|u_k|\lesssim |k|^{-\alpha}\|u\|_{C^{\alpha}(\ES^1)}.
\] 
\end{proposition}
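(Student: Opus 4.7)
My plan is to combine a mollification argument with an integration-by-parts trick exploiting the eigenvalue equation $L_q\psi_k=\omega_k^2\psi_k$. The key observation is that by Lemma~\ref{lemm apend unif} the eigenvalues grow like $\omega_k\sim k$ so the operator $L_q$ gains two derivatives when inverted on the $k$-th mode, while the Hölder hypothesis lets me trade regularity for smallness in a controlled way.

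First I would fix a standard (periodic) mollifier $\rho_\epsilon$ on $\mathbb{S}^1$ and set $u_\epsilon=u*\rho_\epsilon$. Standard mollification estimates give
\[
\|u-u_\epsilon\|_{L^\infty(\mathbb{S}^1)}\lesssim \epsilon^{\alpha}\|u\|_{C^\alpha(\mathbb{S}^1)},\qquad \|u_\epsilon''\|_{L^\infty(\mathbb{S}^1)}\lesssim \epsilon^{\alpha-2}\|u\|_{C^\alpha(\mathbb{S}^1)},
\]
so in particular $\|L_q u_\epsilon\|_{L^\infty}\lesssim \epsilon^{\alpha-2}\|u\|_{C^\alpha}$ for $\epsilon$ small (the zero-order term $qu_\epsilon$ being harmless). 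Splitting
\[
\hat u_k=\int_{\mathbb{S}^1}(u-u_\epsilon)\psi_k\,dx+\int_{\mathbb{S}^1}u_\epsilon\psi_k\,dx,
\]
the first term is bounded by $\|u-u_\epsilon\|_{L^\infty}\|\psi_k\|_{L^1}\lesssim \epsilon^{\alpha}\|u\|_{C^\alpha}$ using that $\|\psi_k\|_\infty$ is uniformly bounded by Lemma~\ref{lemm apend unif}(ii).

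For the second term, since $u_\epsilon$ and $\psi_k$ are smooth and periodic, I would integrate by parts twice (boundary terms cancel by periodicity) to move $L_q$ onto $\psi_k$ and use the eigenvalue equation:
\[
\omega_k^2\int_{\mathbb{S}^1}u_\epsilon\psi_k\,dx=\int_{\mathbb{S}^1}u_\epsilon\,L_q\psi_k\,dx=\int_{\mathbb{S}^1}(L_q u_\epsilon)\psi_k\,dx.
\]
For $|k|$ large enough that $\omega_k^2\gtrsim k^2$ (by Lemma~\ref{lemm apend unif}(i)), Cauchy--Schwarz together with $\|\psi_k\|_{L^2}=1$ yields
\[
\bigl|\tfrac{}{}\hat{u_\epsilon}_k\bigr|\lesssim k^{-2}\|L_q u_\epsilon\|_{L^2}\lesssim k^{-2}\epsilon^{\alpha-2}\|u\|_{C^\alpha}.
\]
Combining the two estimates and choosing $\epsilon=|k|^{-1}$ balances the terms to give
\[
|\hat u_k|\lesssim\bigl(\epsilon^{\alpha}+k^{-2}\epsilon^{\alpha-2}\bigr)\|u\|_{C^\alpha}\lesssim |k|^{-\alpha}\|u\|_{C^\alpha},
\]
as required. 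For the finitely many small $|k|$ where $\omega_k^2$ might fail the lower bound, the trivial estimate $|\hat u_k|\le \|u\|_{L^1}\|\psi_k\|_\infty\lesssim \|u\|_{C^\alpha}$ suffices after absorbing into the implicit constant.

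I do not expect any serious obstacle: the only slightly subtle point is verifying that the mollification bounds for $\|u_\epsilon''\|_\infty$ and $\|u-u_\epsilon\|_\infty$ hold uniformly on $\mathbb{S}^1$ (routine via convolution with a periodized mollifier), and confirming the uniform $L^\infty$ control on $\psi_k$ from the asymptotic expansion in Lemma~\ref{lemm apend unif}(ii). The interpolation between $C^0$ and $C^2$ encoded in the choice $\epsilon=|k|^{-1}$ is the heart of the argument and is robust; no translation trick or delicate cancellation involving $\omega_k-k$ is needed, which sidesteps the issue that $\cos(\omega_k x),\sin(\omega_k x)$ are only approximately periodic on $\mathbb{S}^1$.
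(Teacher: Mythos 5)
Your proof is correct, and it takes a genuinely different route from the paper. The paper's argument is a two-line transfer: it invokes the classical harmonic-analysis fact that $\bigl|\int_{\ES^1}u(x)e^{-ikx}\,dx\bigr|\lesssim |k|^{-\alpha}\|u\|_{C^\alpha}$ (the translation/cancellation trick you explicitly avoid) and then carries this over to the perturbed eigenfunctions via Lemma~\ref{lemm apend unif}(ii), i.e. $\psi_k=a_k\cos\omega_k x+b_k\sin\omega_k x+\mathcal O(|\omega_k|^{-1})$ together with $\omega_k^2=k^2+\mathcal O(k^{-2})$; the slightly delicate point there, left implicit in the paper's ``follows easily'', is precisely the control of the $\mathcal O(|\omega_k|^{-1})$ remainder and of the mismatch between $\omega_k$ and the integer frequency. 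Your mollification-plus-self-adjointness argument bypasses all of that: you only use the eigenvalue equation, the symmetry of $L_q$ on the circle, the lower bound $\omega_k^2\gtrsim k^2$ from part (i), and the normalization $\|\psi_k\|_{L^2}=1$ (indeed $\|\psi_k\|_{L^1}\lesssim\|\psi_k\|_{L^2}$ already suffices for your first term, so part (ii) is not even needed). What the paper's route buys is brevity, since the Fourier-coefficient decay for H\"older functions is standard; what yours buys is self-containedness and robustness — it works verbatim for any Sturm--Liouville operator with bounded potential whose eigenvalues grow quadratically, without any structural information on the eigenfunctions beyond $L^2$-normalization. The interpolation step (choosing $\epsilon=|k|^{-1}$ to balance $\epsilon^\alpha$ against $k^{-2}\epsilon^{\alpha-2}$) and the standard mollification bounds are sound, and the finitely many low modes, including $\omega_0=0$, are correctly handled by the trivial estimate.
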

\begin{proof}
By a well known result in harmonic analysis: 
\[
\left|\int_{\ES^1} u(x) e^{\,-ikx}\,dx\right|\lesssim |k|^{-\alpha}\|u\|_{C^{\alpha}(\ES^1)}
\]
From this and Lemma \ref{lemm apend unif} our claim follows easily. 
\end{proof}
We are thankful to  J. Jendrej \cite{jacek} who pointed out to us this simple argument.

}}

 \end{document}